\newcommand{\keywords}[1]{\par\addvspace\baselineskip
\noindent\keywordname\enspace\ignorespaces#1}
\DeclareMathOperator    \conv           {conv}
\DeclareRobustCommand\sage[1]{\texttt{#1}}
\newcommand{\bb}{\mathbb}
\newcommand{\R}{\bb R}
\newcommand{\floor}[1]{\lfloor#1\rfloor}
\renewcommand{\P}{\mathcal{P}}
\newcommand{\setcond}[2]{\left\{ #1 \,:\, #2 \right\}}
\begin{document}

\mainmatter  % start of an individual contribution

% first the title is needed
%\title{Lecture Notes in Computer Science:\\Authors' Instructions
%for the Preparation\\of Camera-Ready
%Contributions\\to LNCS/LNAI/LNBI Proceedings}
%\title{Structure and
%  Interpretation \\ of General Dual Feasible Functions}
  
\title{Characterization and Approximation of \\ Strong General Dual Feasible Functions}

% a short form should be given in case it is too long for the running head
%\titlerunning{Lecture Notes in Computer Science: Authors' Instructions}

% the name(s) of the author(s) follow(s) next
%
% NB: Chinese authors should write their first names(s) in front of
% their surnames. This ensures that the names appear correctly in
% the running heads and the author index.
%

\author{Matthias K\"oppe$^{\textrm{\Letter}}$% 
\thanks{The authors gratefully acknowledge partial support from the National Science
  Foundation through grant DMS-1320051 (M.~K\"oppe). A part of this work was done while the first author (M.~K\"oppe) was visiting the Simons Institute for the Theory of Computing. It was partially supported by the DIMACS/Simons Collaboration on Bridging Continuous and Discrete Optimization through NSF grant CCF-1740425.}%
\and Jiawei Wang}

%
%\authorrunning{Lecture Notes in Computer Science: Authors' Instructions}
% (feature abused for this document to repeat the title also on left hand pages)

% the affiliations are given next; don't give your e-mail address
% unless you accept that it will be published
\institute{Dept.\ of Mathematics, University of California, Davis, USA\\
\email{mkoeppe@math.ucdavis.edu},
\email{jwewang@ucdavis.edu}\\
%\path|{mkoeppe@math.ucdavis.edu}|\\
%\path|{jwewang@ucdavis.edu}|\\
}

%
% NB: a more complex sample for affiliations and the mapping to the
% corresponding authors can be found in the file "llncs.dem"
% (search for the string "\mainmatter" where a contribution starts).
% "llncs.dem" accompanies the document class "llncs.cls".
%

\graphicspath{{../dff_paper_graphics/}} 
\DeclareGraphicsExtensions{.pdf,.png,.jpg}
\newcommand\Figure[2][\relax]{%
  \begin{figure}[h!]
    \includegraphics[width=.8\textwidth]{#2}
    \caption{\ifx#1\relax#2\else#1\fi}
  \end{figure}
}

\toctitle{Lecture Notes in Computer Science}
\tocauthor{Authors' Instructions}
\maketitle

\begin{abstract}
Dual feasible functions (DFFs) have been used to provide bounds for standard packing problems and valid inequalities for integer optimization problems. In this paper, the connection between general DFFs and a particular family of cut-generating functions is explored. We find the characterization of (restricted/strongly) maximal general DFFs and prove a 2-slope theorem for extreme general DFFs.  We show that any restricted maximal general DFF can be well approximated by an extreme general DFF.
\keywords{Dual feasible functions, cut-generating functions, integer programming, 2-slope theorem}
\end{abstract}

\section{Introduction}

Dual feasible functions (DFFs) are a fascinating family of functions $\phi\colon [0,1] \to [0,1]$, which have been used in several combinatorial optimization problems including knapsack type inequalities and proved to generate lower bounds efficiently. DFFs are in the scope of superadditive duality theory,  and superadditive and nondecreasing DFFs can provide valid inequalities for general integer linear programs. Lueker
\cite{Lueker:1983:BPI:1382437.1382833} studied the bin-packing problems and used certain DFFs to obtain lower bounds for the first time. Vanderbeck \cite{vanderbeck2000exact} proposed an exact algorithm for the cutting stock problems which includes adding valid inequalities generated by DFFs. Rietz et al. \cite{rietz2012computing} recently introduced a variant of this theory, in which the domain of DFFs is extended to all real numbers. Rietz et al. \cite{Rietz2014} studied the maximality of the so-called ``general dual feasible functions." They also summarized recent literature on DFFs in the monograph \cite{alves-clautiaux-valerio-rietz-2016:dual-feasible-book}. 
In this paper, we follow the notions in the monograph \cite{alves-clautiaux-valerio-rietz-2016:dual-feasible-book} and study the general DFFs. 

%The superadditive duality theory of integer linear programs has several different concrete forms. 
Cut-generating functions play an essential role in generating valid inequalities which cut off the current fractional basic solution in a simplex-based cutting plane procedure. Gomory and Johnson \cite{infinite,infinite2} first studied the corner relaxation of integer linear programs, which is obtained by relaxing the non-negativity of basic variables in the tableau.  Gomory--Johnson cut-generating functions are critical in the superadditive duality theory of integer linear optimization problems, and they have been used in the state-of-art integer program solvers. K\"oppe and Wang \cite{KOPPE2017153} discovered a conversion from minimal Gomory--Johnson cut-generating functions to maximal DFFs.

Y{\i}ld{\i}z and Cornu\'ejols \cite{yildiz2016cut} introduced a generalized model of Gomory--Johnson cut-generating functions. In the single-row Gomory--Johnson model, the basic variables are in $\mathbb{Z}$. Y{\i}ld{\i}z and Cornu\'ejols considered the basic variables to be in any set $S\subset\mathbb{R}$. Their results extended the characterization of minimal Gomory--Johnson cut-generating functions in terms of the generalized symmetry condition. Inspired by the characterization of minimal Y{\i}ld{\i}z--Cornu\'ejols cut-generating functions, we complete the characterization of maximal general DFF.% in the monograph  \cite{alves-clautiaux-valerio-rietz-2016:dual-feasible-book}. 

We connect general DFFs to the classic model studied by Jeroslow \cite{jeroslow1979minimal}, Blair \cite{BLAIR1978147} and Bachem et al. \cite{BACHEM198263} and a relaxation of their model, both of which can be studied in the Y{\i}ld{\i}z--Cornu\'ejols model \cite{yildiz2016cut} with various sets $S$. General DFFs generate valid inequalities for the Y{\i}ld{\i}z--Cornu\'ejols model with $S=(-\infty,0]$, and cut-generating functions generate valid inequalities for the Jeroslow model where $S=\{0\}$. The relation between these two families of functions is explored.

Another focus of this paper is on the extremality of general DFFs. In terms of Gomory--Johnson cut-generating functions, the 2-slope theorem is a famous result of Gomory and Johnson's masterpiece \cite{infinite,infinite2}. Basu et al. \cite{bhm:dense-2-slope} proved that the 2-slope extreme Gomory--Johnson cut-generating functions are dense in the set of continuous minimal functions. We show that any 2-slope maximal general DFF with one slope value $0$ is extreme. This result is a key step in our approximation theorem, which indicates that almost all continuous maximal general DFFs can be approximated by extreme (2-slope) general DFFs as close as we desire. Unlike the 2-slope fill-in procedure Basu et al. \cite{bhm:dense-2-slope} used, we always use $0$ as one slope value in our fill-in procedure, which is necessary since the 2-slope theorem of general DFFs requires $0$ to be one slope value.

 This paper is structured as follows. 
In \autoref{s:LR}, we provide the preliminaries of DFFs from the monograph \cite{alves-clautiaux-valerio-rietz-2016:dual-feasible-book}. The characterizations of maximal, restricted maximal and strongly maximal general DFFs are described in \autoref{s:characterization}. In  \autoref{s:YC}, we explore the relation between general DFFs and a particular family of cut-generating functions in terms of the ``lifting" procedure. The 2-slope theorem for extreme general DFFs is studied in \autoref{s:2slope}. In \autoref{s:approximation}, we introduce our approximation theorem, adapting a parallel construction in Gomory--Johnson's setting  \cite{bhm:dense-2-slope}.

\section{Literature Review}
\label{s:LR}
\begin{definition}[{\cite[Definition 2.1]{alves-clautiaux-valerio-rietz-2016:dual-feasible-book}}]
A function $\phi\colon [0,1] \to [0,1]$ is called a (valid) \emph{classical Dual-Feasible Function} (cDFF), if for any finite index set $I$ of nonnegative real numbers $x_i \in [0,1]$, it holds that,

$$\sum_{i\in I}x_i \le 1 \Rightarrow \sum_{i\in I}\phi(x_i) \le 1 $$

\end{definition}

%In order to apply classical DFFs, all variables should stay in $[0,1]$, which is not  convenient. Generalization of DFF is necessary for certain types of problem, like vector packing problems (see section 3.5 in \cite{alves-clautiaux-valerio-rietz-2016:dual-feasible-book}).

\begin{definition}[{\cite[Definition 3.1]{alves-clautiaux-valerio-rietz-2016:dual-feasible-book}}]
A function $\phi\colon \mathbb{R} \to\mathbb{R}$ is called a (valid) \emph{general Dual-Feasible Function} (gDFF), if for any finite index set $I$ of real numbers $x_i \in \mathbb{R}$, it holds that,

$$\sum_{i\in I}x_i \le 1 \Rightarrow \sum_{i\in I}\phi(x_i) \le 1 $$

\end{definition}

%From the definitions above, we can see that DFFs can be used to derive valid inequalities for IP problems. 

Despite the large number of DFFs that may be defined, we are only interested in so-called ``maximal" functions since they yield better bounds and stronger valid inequalities.  A cDFF/gDFF is \emph{maximal} if it is not (pointwise) dominated by a distinct cDFF/gDFF.  In order to get strongest valid inequalities, maximality is not enough. A cDFF/gDFF is \emph{extreme} if it cannot be written as a convex combination of other two different cDFFs/gDFFs. In the monograph \cite{alves-clautiaux-valerio-rietz-2016:dual-feasible-book},  the authors explored maximality of both cDFFs and gDFFs.  

\begin{theorem}[{\cite[Theorem 2.1]{alves-clautiaux-valerio-rietz-2016:dual-feasible-book}}]
\label{thm:maximality-classical}
A function $\phi\colon [0,1] \to [0,1]$ is a maximal cDFF if and only if the following conditions hold:
\begin{enumerate}[(i)]
  \item[(i)] 
  $\phi$ is superadditive.
        \item[(ii)] 
    $\phi$ is symmetric in the sense $\phi(x)+\phi(1-x)=1$.
  \item[(iii)] 
    $\phi(0)=0$.
  \end{enumerate}
\end{theorem}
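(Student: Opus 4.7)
Assume $\phi$ satisfies (i)--(iii). Evaluating (ii) at $x=0$ together with (iii) gives $\phi(1)=1$. For validity, given $\{x_i\}_{i\in I}$ with $\sum_{i\in I} x_i\le 1$, introduce the slack $s=1-\sum_{i\in I} x_i\ge 0$ and iterate superadditivity to obtain $\sum_{i\in I}\phi(x_i)+\phi(s)\le\phi(1)=1$; since $\phi(s)\ge 0$, we conclude $\sum_{i\in I}\phi(x_i)\le 1$. For maximality, suppose some cDFF $\phi'\ge\phi$ satisfies $\phi'(x_0)>\phi(x_0)$ at a point $x_0$. Applying validity of $\phi'$ on the pair $\{x_0,1-x_0\}$ and invoking symmetry of $\phi$ yields $\phi'(1-x_0)\le 1-\phi'(x_0)<1-\phi(x_0)=\phi(1-x_0)$, contradicting $\phi'\ge\phi$.

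\textbf{Necessity.} Assume $\phi$ is maximal. Condition (iii) follows since validity on $n$ copies of $0$ yields $n\phi(0)\le 1$, forcing $\phi(0)=0$. Maximality also forces $\phi(1)=1$, because raising $\phi(1)$ to $1$ preserves validity (a configuration with $\sum x_i\le 1$ can contain at most one value equal to $1$, and then all others must vanish). For (i), I would define the superadditive closure
\[
\phi^{\sharp}(x)\;=\;\sup\Bigl\{\textstyle\sum_{i\in I}\phi(x_i)\;:\;\sum_{i\in I} x_i=x,\ x_i\in[0,1]\Bigr\},
\]
which dominates $\phi$, maps into $[0,1]$ by validity of $\phi$, and is superadditive by construction. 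Moreover $\phi^{\sharp}$ is itself a valid cDFF: concatenating near-supremum decompositions of each $y_j$ in $\sum_j y_j\le 1$ gives a single family of total $\le 1$, to which validity of $\phi$ applies. Maximality then forces $\phi^{\sharp}=\phi$. For (ii), validity on $\{x,1-x\}$ gives $\phi(x)+\phi(1-x)\le 1$; for the reverse, fix $x_0\in(0,1)$. I claim maximality forces an integer $k\in\{1,\dots,\lfloor 1/x_0\rfloor\}$ with $k\phi(x_0)+\phi(1-kx_0)=1$. Granting this, iterating the (now-established) superadditivity over $(k-1)$ copies of $x_0$ and $1-kx_0$ gives
\[
(k-1)\phi(x_0)+\phi(1-kx_0)\;\le\;\phi\bigl((k-1)x_0+(1-kx_0)\bigr)\;=\;\phi(1-x_0),
\]
whence $\phi(x_0)+\phi(1-x_0)\ge k\phi(x_0)+\phi(1-kx_0)=1$.

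\textbf{Main obstacle.} The delicate step is producing the tight index $k$ in the symmetry argument. The plan is to argue by contrapositive: if $k\phi(x_0)+\phi(1-kx_0)<1$ held for every admissible $k$, then a small uniform $\epsilon>0$ could be added to $\phi(x_0)$ without breaking validity, contradicting maximality. The enabling fact is that by the already proved superadditivity (and hence monotonicity, via $\phi\ge 0$), the validity of the enhanced function reduces to the finitely many conditions indexed by $k\in\{1,\dots,\lfloor 1/x_0\rfloor\}$: an arbitrary cDFF configuration $\{k\text{ copies of }x_0,y_1,\dots,y_m\}$ is $\phi$-dominated, via $\sum_j\phi(y_j)\le\phi(\sum_j y_j)$ together with monotonicity, by the canonical two-value configuration $\{k\text{ copies of }x_0,\,1-kx_0\}$.
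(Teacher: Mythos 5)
Your proof is correct. A caveat before comparing: the paper does not prove this statement at all --- it is imported verbatim as a known result (Theorem~2.1 of the monograph of Alves et al.), so there is no in-paper proof to check you against line by line; the fair comparison is with the paper's proofs of the analogous characterizations for \emph{general} DFFs in Section~3.

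Measured against those, your argument is a correct specialization plus one genuinely independent ingredient. The point-bumping mechanism you use for symmetry is exactly the paper's: in its proposition characterizing maximal gDFFs, the paper lifts $\phi(r_0)$ to $\inf_{k}\{\frac{1}{k}(1-\phi(1-kr_0))\}$ and verifies validity of the perturbed function by splitting off the copies of $r_0$ and absorbing the remaining items via superadditivity and monotonicity --- precisely your reduction of an arbitrary configuration to the canonical one $\{k \text{ copies of } x_0,\ 1-kx_0\}$; likewise, your $(k-1)$-fold superadditivity computation passing from a tight index $k$ to $\phi(x_0)+\phi(1-x_0)\ge 1$ is Case~1 of the paper's proof for restricted maximal gDFFs. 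What you correctly exploit is that on $[0,1]$ only the finitely many indices $k\in\{1,\dots,\lfloor 1/x_0\rfloor\}$ are admissible, so either one of the inequalities $k\phi(x_0)+\phi(1-kx_0)\le 1$ is tight or a uniform $\epsilon$-bump at $x_0$ preserves validity, contradicting maximality. This is what eliminates the delicate ``Case~2'' of the paper's gDFF proof (the limiting argument with $k_\epsilon\to\infty$ and $\phi(kx)=k\phi(x)$), which is unavoidable over $\mathbb{R}$ where the infimum need not be attained --- and it is also why the classical theorem holds with plain symmetry as a clean ``if and only if,'' whereas for gDFFs the linear functions $\phi(x)=cx$, $0\le c<1$, escape condition (ii). Finally, your superadditive-closure construction $\phi^{\sharp}$ for the necessity of (i) has no counterpart anywhere in the paper (which takes superadditivity of maximal functions from the literature); it is sound --- $\phi^{\sharp}$ dominates $\phi$, is superadditive by concatenation of decompositions, and is itself valid by concatenating near-optimal decompositions --- so maximality forces $\phi^{\sharp}=\phi$, and it makes your write-up fully self-contained.
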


%As for the maximality of the gDFF, so far there is no characterization for that. However, there are sufficient conditions and necessary conditions explained in \cite{alves-clautiaux-valerio-rietz-2016:dual-feasible-book}.

\begin{theorem}[{\cite[Theorem 3.1]{alves-clautiaux-valerio-rietz-2016:dual-feasible-book}}]
\label{thm:maximality-general}
Let $\phi\colon \mathbb{R} \to\mathbb{R}$ be a given function. If $\phi$ satisfies the following conditions, then $\phi$ is a maximal gDFF:
\begin{enumerate}[(i)]
  \item[(i)] 
  $\phi$ is superadditive.
        \item[(ii)] 
    $\phi$ is symmetric in the sense $\phi(x)+\phi(1-x)=1$.
  \item[(iii)] 
    $\phi(0)=0$.
    \item[(iv)]
    There exists an $\epsilon>0$ such that $\phi(x)\ge 0$ for all $x\in(0, \epsilon)$.
  \end{enumerate}
If $\phi$ is a maximal gDFF, then $\phi$ satisfies conditions $(i)$, $(iii)$ and $(iv)$.
\end{theorem}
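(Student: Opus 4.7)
\medskip\noindent\textbf{Proof proposal.} I would split the theorem into its sufficiency direction (conditions (i)--(iv) imply maximality) and its necessity direction (maximality implies (i), (iii), (iv)), and handle each separately.

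For \emph{sufficiency}, my first step is to boost condition~(iv) to all of $(0,\infty)$: given $x>0$, pick $n\in\mathbb{N}$ large enough that $x/n\in(0,\epsilon)$, so iterated superadditivity gives $\phi(x)\ge n\phi(x/n)\ge 0$. Combined with (iii) and (ii), this also yields $\phi(1)=1$ and $\phi(1-s)\ge 0$ for every $s\le 1$. For any finite family $\{x_i\}$ with $s:=\sum_i x_i\le 1$, iterated superadditivity gives $\sum_i\phi(x_i)\le\phi(s)$, and then applying superadditivity to the pair $(s,1-s)$ together with (ii) gives $\phi(s)\le\phi(1)-\phi(1-s)=1-\phi(1-s)\le 1$. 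Thus $\phi$ is a valid gDFF. Maximality then follows from a short argument: if $\phi'\ge\phi$ is any gDFF, applying the DFF inequality for $\phi'$ to the pair $\{x,1-x\}$ yields $\phi'(x)+\phi'(1-x)\le 1 = \phi(x)+\phi(1-x)$, and combined with $\phi'\ge\phi$ pointwise this forces $\phi'\equiv\phi$.

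For \emph{necessity}, I would prove (iii), (i), and (iv) in turn by the standard ``modify and contradict maximality'' recipe. For (iii), the DFF tuple $(0,\ldots,0)$ of length $n$ forces $n\phi(0)\le 1$ for every $n$, hence $\phi(0)\le 0$; if the inequality were strict, setting $\phi'(0):=0$ and $\phi'\equiv\phi$ elsewhere yields a strictly larger gDFF (verified by dropping zero summands from any DFF tuple), contradicting maximality. For (i), if $\phi(a)+\phi(b)>\phi(a+b)$ for some $a,b$, define $\phi'(a+b):=\phi(a)+\phi(b)$ and $\phi'\equiv\phi$ elsewhere; then in any family $\{y_j\}$ with $\sum y_j\le 1$, replacing every occurrence of the value $a+b$ by the pair $a,b$ preserves the sum and reduces the DFF inequality for $\phi'$ to the one for $\phi$. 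For (iv), failure of the condition means arbitrarily small positive $x$ admit $\phi(x)<0$; fixing such an $\epsilon$, I would set $\phi'(x):=\max(\phi(x),0)$ on $(0,\epsilon)$ and $\phi'\equiv\phi$ elsewhere. Given $\sum y_j\le 1$, let $J$ index the $j$ with $y_j\in(0,\epsilon)$ and $\phi(y_j)<0$; since those $y_j$ are \emph{positive}, dropping them leaves $\sum_{j\notin J}y_j\le 1$, and $\phi$'s DFF property together with $\phi'(y_j)=0$ on $J$ gives $\sum_j\phi'(y_j)\le 1$.

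The main obstacle will be the modification argument for (iv): one must choose both the right definition of $\phi'$ and the right reduction to $\phi$'s DFF inequality. The crucial observation is that the ``bad'' indices always correspond to \emph{positive} values of $y_j$, so they can be removed from the constraint $\sum y_j\le 1$ without violating it---an asymmetry specific to the general (as opposed to classical) DFF setting, where the domain extends to negative reals and the natural ``sign flipping'' of the domain is unavailable.
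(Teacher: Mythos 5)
Your proof is correct, but note that the paper itself gives no proof of this statement: it is quoted verbatim from the monograph of Alves et al.\ (Theorem~3.1 there) as background in the literature-review section, so there is no in-paper argument to compare against. Judged on its own merits, your two-part argument is sound: the sufficiency direction (boost (iv) to $\phi\ge 0$ on $(0,\infty)$ via $\phi(x)\ge n\phi(x/n)$, get validity from $\sum_i\phi(x_i)\le\phi(s)\le 1-\phi(1-s)\le 1$, then force equality $\phi'\equiv\phi$ for any dominating gDFF $\phi'$ by testing the pair $\{x,1-x\}$) and the necessity direction (the three ``perturb at a point and contradict maximality'' modifications, each verified to remain a valid gDFF) both check out, including the one genuinely delicate step you flag: when repairing (iv), the indices you delete carry \emph{positive} $y_j$, so removing them can only decrease $\sum_j y_j$ and the constraint survives. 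It is worth observing that your necessity recipe is exactly the style the paper uses where it \emph{does} prove things: in its characterization of maximal gDFFs via the generalized symmetry condition (the first proposition of Section~3), the authors likewise construct a pointwise-larger function $\phi_1$, verify it is still a gDFF by splitting off the modified point and invoking superadditivity and monotonicity on the rest, and contradict maximality; and your validity computation $\sum_i\phi(x_i)\le\phi(s)\le 1$ is the same mechanism used there for the ``if'' direction. So your proposal can be read as supplying, in a self-contained way, the proof the paper outsources to the monograph, with techniques consistent with the paper's own toolkit.
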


\begin{remark}
The function $\phi(x)=cx$ for $0\le c < 1$ is a maximal gDFF but it does not satisfy %the symmetry 
condition $(ii)$.
\end{remark}

The following two propositions indicate additional properties of maximal gDFFs. Proposition \ref{limit-slope} shows that any maximal gDFF is the sum of a linear function and a bounded function. Proposition \ref{given-point} explains the behavior of nonlinear maximal gDFFs at given points.

\begin{proposition}[{\cite[Proposition 3.4]{alves-clautiaux-valerio-rietz-2016:dual-feasible-book}}]
\label{limit-slope}
If $\phi\colon \mathbb{R}\to\mathbb{R}$ is a maximal gDFF and $t=\sup\{\frac{\phi(x)}{x}: x>0\}.$ Then we have $\lim_{x\to \infty} \frac{\phi(x)}{x}=t \le -\phi(-1),$ and for any $x\in\mathbb{R}$, it holds that: $tx-\max\{0,t-1\}\le\phi(x)\le tx.$
\end{proposition}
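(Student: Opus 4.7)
My plan is to establish the four assertions in sequence: the bound $t \le -\phi(-1)$ (which in particular forces $t<\infty$), the limit $\lim_{x \to \infty}\phi(x)/x = t$, the upper bound $\phi(x) \le tx$, and finally the lower bound $\phi(x) \ge tx - \max\{0,t-1\}$. Throughout I rely only on the necessary conditions from \autoref{thm:maximality-general}, namely superadditivity, $\phi(0)=0$, and the local non-negativity~(iv), with maximality invoked only at the last step.

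To bound $t$, I plug the multiset $\{y, -1, \dots, -1\}$ with $k=\lceil y-1\rceil$ copies of $-1$ into the gDFF inequality, giving $\phi(y) \le 1 - k\phi(-1)$ for $y \ge 1$. Dividing by $y$ and letting $y \to \infty$ yields $\limsup_{y\to\infty}\phi(y)/y \le -\phi(-1)$; since superadditivity gives $\phi(nx)/(nx) \ge \phi(x)/x$ for every $x>0$ and $n \in \mathbb{N}$, this $\limsup$ is at least $t$, so $t \le -\phi(-1)<\infty$. For the matching liminf, given $\epsilon > 0$ I choose $x_0>0$ with $\phi(x_0)/x_0 > t-\epsilon$ and write $y = nx_0 + r$ with $r \in [0,x_0)$; superadditivity gives $\phi(y) \ge n\phi(x_0) + \phi(r)$, so I only need $\phi(r)$ to be bounded on $[0,x_0]$. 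The upper bound on $\phi(r)$ comes from the same $-1$-copies trick; the lower bound follows from superadditivity at $0=r+(-r)$ combined with the trivial gDFF consequence $\phi(-r)\le 1$.

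The upper bound $\phi(x) \le tx$ is immediate for $x>0$ by definition of $t$. For $x<0$, I apply the gDFF inequality to a multiset of $m$ copies of $x$ together with $k = \lfloor(1-mx)/y\rfloor$ copies of a fixed large $y$ chosen so that $\phi(y)/y > t-\epsilon$; this forces $m\phi(x) + k\phi(y) \le 1$, hence $\phi(x) \le (1 - k\phi(y))/m$. Letting $m \to \infty$, bounding $k\phi(y)$ from below via the limit established in the previous step, and then sending $\epsilon \downarrow 0$ produces $\phi(x) \le tx$.

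The lower bound is the subtle step and is where maximality enters. If $t \le 1$, condition~(iv) forces $t \ge 0$, so $\psi(x) = tx$ is itself a gDFF (because $\sum x_i \le 1$ forces $t\sum x_i \le 1$ when $t\in[0,1]$, noting the case $\sum x_i < 0$ separately), and by the upper bound just proved $\psi \ge \phi$ pointwise; maximality of $\phi$ forces $\phi \equiv tx = tx - \max\{0,t-1\}$. If $t > 1$, I argue by contradiction: suppose $\phi(x_0) < tx_0 - (t-1)$ at some $x_0$, and define $\phi'$ to agree with $\phi$ off $\{x_0\}$ and to equal $tx_0 - (t-1)$ at $x_0$. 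Testing $\phi'$ on any multiset with $k\ge 1$ copies of $x_0$ and other entries $z_j$ satisfying $\sum z_j \le 1 - kx_0$, and using the upper bound $\phi(z_j) \le tz_j$, I compute $\sum\phi'(y_i) \le k(tx_0 - (t-1)) + t(1-kx_0) = t - k(t-1) \le 1$, the last inequality using $k \ge 1$ and $t>1$. Thus $\phi'$ is a gDFF that strictly dominates $\phi$, contradicting maximality. The clean cancellation of the $ktx_0$ terms in this computation is precisely what makes the gap $\max\{0,t-1\}$ sharp; any smaller proposed modification at $x_0$ would leave slack $(k-1)(t-1) \ge 0$ that one cannot rule out.
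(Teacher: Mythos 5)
The paper itself contains no proof of Proposition~\ref{limit-slope}: it is quoted directly from the monograph \cite[Proposition 3.4]{alves-clautiaux-valerio-rietz-2016:dual-feasible-book}, so there is no in-paper argument to compare yours against, and I can only judge the proposal on its own merits. On those merits it is essentially correct and well structured: the packing argument with $\lceil y-1\rceil$ copies of $-1$ gives $t\le-\phi(-1)<\infty$; the superadditivity estimate $\phi(nx)\ge n\phi(x)$ plus boundedness of $\phi$ on $[0,x_0]$ gives the limit; the packing argument with many copies of $x<0$ and copies of a fixed large $y$ gives $\phi(x)\le tx$ for negative $x$; and the two maximality steps for the lower bound (domination by $\psi(x)=tx$ when $t\le 1$, and the single-point modification $\phi'(x_0)=tx_0-(t-1)$ when $t>1$, where the $ktx_0$ terms cancel and $k\ge1$, $t>1$ give $t-k(t-1)\le 1$) are exactly the right way to invoke maximality.

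One justification is wrong as written, although the fact it is meant to support is true and easily recovered. In the liminf step you need $\phi$ bounded below on $[0,x_0]$, and you claim this "follows from superadditivity at $0=r+(-r)$ combined with $\phi(-r)\le 1$". Superadditivity at $0$ gives $\phi(r)+\phi(-r)\le\phi(0)=0$, i.e.\ $\phi(r)\le-\phi(-r)$: that is an \emph{upper} bound on $\phi(r)$, and combined with $\phi(-r)\le 1$ it yields no lower bound at all. (No lower bound can come from validity alone: depressing a valid gDFF at a single positive point keeps it valid.) The correct one-line fix uses condition (iv) of \autoref{thm:maximality-general}, which you listed among your hypotheses but did not invoke here: for $x>0$ choose $n$ with $x/n<\epsilon$; then $\phi(x)\ge n\phi(x/n)\ge 0$, so $\phi\ge0$ on $\mathbb{R}_+$. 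Note that this same fact is also silently needed in your negative-$x$ argument for $\phi(x)\le tx$, where you replace $k=\lfloor(1-mx)/y\rfloor$ by the lower estimate $(1-mx)/y-1$ inside $k\phi(y)$: that replacement is only safe because $\phi(y)\ge0$. With this patch the proof is complete.
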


\begin{proposition}[{\cite[Proposition 3.5]{alves-clautiaux-valerio-rietz-2016:dual-feasible-book}}]
\label{given-point}
If $\phi\colon \mathbb{R}\to\mathbb{R}$ is a maximal gDFF and not
of the kind  $\phi(x)=cx$ for $0\le c < 1$, then $\phi(1)=1$ and $\phi(\frac{1}{2})=\frac{1}{2}$.
\end{proposition}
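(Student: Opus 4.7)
The plan is a case split on the slope parameter $t := \sup_{x > 0} \phi(x)/x$ of Proposition~\ref{limit-slope}. The gDFF condition applied to the singleton $\{1\}$ and to two copies of $1/2$ immediately gives the easy upper bounds $\phi(1) \le 1$ and $\phi(1/2) \le 1/2$. If $t \le 1$, the sandwich $tx - \max\{0,t-1\} \le \phi(x) \le tx$ of Proposition~\ref{limit-slope} collapses to $\phi(x) = tx$, and the hypothesis that $\phi$ is not of the form $cx$ with $0 \le c < 1$ forces $t = 1$, hence $\phi(x) = x$, which simultaneously delivers $\phi(1) = 1$ and $\phi(1/2) = 1/2$. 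If $t > 1$, the lower bound at $x = 1$ gives $\phi(1) \ge t - (t-1) = 1$ and hence $\phi(1) = 1$.

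It then remains to establish $\phi(1/2) = 1/2$ in the case $t > 1$. I plan to argue by contradiction: assume $c := \phi(1/2) < 1/2$ and construct a gDFF $\phi'$ that strictly dominates $\phi$ at $1/2$, violating maximality. The natural candidate is $\phi'(1/2) = 1/2$ and $\phi'(x) = \phi(x)$ for $x \ne 1/2$, so $\phi' \ge \phi$ with strict inequality at $1/2$. To verify $\phi'$ is a gDFF, fix a finite family $(x_i)_{i \in I}$ with $\sum_i x_i \le 1$, let $k$ be the multiplicity of $1/2$ among the $x_i$ and $J$ the remaining indices, so $\sum_{j \in J} x_j \le 1 - k/2$. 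Iterated superadditivity and monotonicity of $\phi$ yield
\begin{equation*}
\sum_{j \in J} \phi(x_j) \;\le\; \phi\!\left(\sum_{j \in J} x_j\right) \;\le\; \phi(1 - k/2),
\end{equation*}
so $\sum_i \phi'(x_i) \le k/2 + \phi(1 - k/2)$. This is $\le 1$ in every case: for $k = 0$ it is $\phi(1) = 1$; for $k = 1$ it is $1/2 + c < 1$; for $k \ge 2$ the argument $1 - k/2 \le 0$ combined with the bound $\phi(y) \le ty$ of Proposition~\ref{limit-slope} (here $\phi(y) \le ty \le y$ since $t \ge 1$ and $y \le 0$) gives $\phi(1 - k/2) \le 1 - k/2$ and hence $k/2 + \phi(1 - k/2) \le 1$.

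Two easy preliminaries are set up before the modification argument. First, $\phi \ge 0$ on $(0, \infty)$: any $y > 0$ is a finite sum of points from the interval $(0, \epsilon)$ of condition~(iv) of Theorem~\ref{thm:maximality-general}, so superadditivity gives $\phi(y) \ge 0$. Second, $\phi$ is nondecreasing on $\mathbb{R}$, since for $a < b$ one has $\phi(b) \ge \phi(a) + \phi(b-a) \ge \phi(a)$. I expect the main subtlety to be the $k \ge 2$ verification for $\phi'$, where one must combine monotonicity, the sign-based bound $\phi(y) \le y$ on $(-\infty, 0]$, and the arithmetic cancellation $k/2 + (1 - k/2) = 1$; the rest of the proof is bookkeeping with Proposition~\ref{limit-slope}.
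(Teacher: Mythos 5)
Your proof is correct, and all the delicate points check out: the sandwich from Proposition~\ref{limit-slope} does collapse to $\phi(x)=tx$ when $t\le 1$ (forcing $\phi(x)=x$ under the hypothesis), the lower bound at $x=1$ settles $\phi(1)=1$ when $t>1$, and the single-point modification $\phi'$ is verified to be a gDFF in every multiplicity case, including the one genuinely nontrivial case $k\ge 2$, where your chain $\phi(1-k/2)\le t(1-k/2)\le 1-k/2$ (valid because $t\ge 1$ and $1-k/2\le 0$) gives exactly the cancellation needed. Note, however, that there is no in-paper proof to compare against: the paper quotes this statement as \cite[Proposition~3.5]{alves-clautiaux-valerio-rietz-2016:dual-feasible-book} in its literature review and never proves it. What your argument buys is a self-contained derivation from only the two facts the paper does quote, namely the necessary conditions (superadditivity, $\phi(0)=0$, nonnegativity near $0$) of Theorem~\ref{thm:maximality-general} and the slope bounds of Proposition~\ref{limit-slope}. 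It is also methodologically consistent with the paper: your ``bump the value at one point and re-verify validity by splitting on the multiplicity of that point'' construction is the same device the paper uses to prove its characterization of maximal gDFFs via the generalized symmetry condition, where a candidate dominating function $\phi_1$ is defined by raising $\phi$ at a single point $r_0$. Indeed, an alternative route to $\phi(1/2)=1/2$ would be to invoke that characterization directly with $r=1/2$, $k=2$: the generalized symmetry condition gives $\phi(1/2)\le \frac{1}{2}(1-\phi(0))=\frac12$ and, in the other direction, $\phi(1/2)=\inf_k\{\frac1k(1-\phi(1-k/2))\}$ combined with $\phi(1-k/2)\le t(1-k/2)\le 1-k/2$ yields $\phi(1/2)\ge 1/2$; your argument essentially re-derives the relevant instance of that condition from scratch, which makes it more elementary but slightly longer.
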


Maximal gDFFs can be obtained by extending maximal cDFFs to the domain $\mathbb{R}$.  \cite[Proposition 3.10]{alves-clautiaux-valerio-rietz-2016:dual-feasible-book} uses quasiperiodic extensions and  \cite[Proposition 3.12]{alves-clautiaux-valerio-rietz-2016:dual-feasible-book}  uses affine functions when $x$ is not in $[0,1]$.

\begin{comment}

\begin{proposition}[{\cite[Proposition 3.10] {alves-clautiaux-valerio-rietz-2016:dual-feasible-book}}]
\label{ex1}
Let $\phi$ be a maximal cDFF, then there exists $b_0\ge1$ such that for all $b>b_0$ the following function $\hat{\phi}(x)$ is a maximal gDFF.
\begin{equation*}
            \hat{\phi}(x) =
                \begin{cases}
                    b\cdot \floor{x}+\phi(\rm{frac}($x$\,))   & \, \,   \text{if $x\le1$ } \\
                    1-\hat{\phi}(1-x) & \, \, \text {if $x>1$}
                \end{cases}
        \end{equation*}
\end{proposition}

\begin{theorem}[{\cite[Proposition 3.12] {alves-clautiaux-valerio-rietz-2016:dual-feasible-book}}]
\label{ex2}
Let $\phi$ be a maximal cDFF, then there exists $b\ge1$ such that the following function $\hat{\phi}(x)$ is a maximal gDFF.
\begin{equation*}
            \hat{\phi}(x) =
                \begin{cases}
                    bx+1-b   & \, \,   \text{if $x<0$ } \\
                    bx & \, \, \text {if $x>1$}\\
                    \phi(x) & \, \, \text {if $0\le x\le1$}
                \end{cases}
        \end{equation*}

\end{theorem}

\end{comment}

The following proposition utilizes the fact that maximal gDFFs are superadditive and nondecreasing, which can be used to generate valid inequalities for general linear integer optimization problems.

\begin{proposition}[{\cite[Proposition 5.1] {alves-clautiaux-valerio-rietz-2016:dual-feasible-book}}]
If $\phi$ is a maximal gDFF and $L=\{x\in\mathbb{Z}_+^n: \sum_{j=1}^{n}a_{ij}x_j\le b_j, i=1,2,\dots,m\}$, then for any $i$, $\sum_{j=1}^{n}\phi(a_{ij})x_j\le \phi(b_j)$ is a valid inequality for $L$.
\end{proposition}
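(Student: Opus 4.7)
The plan is to combine the superadditivity of a maximal gDFF with monotonicity and the integrality of $x\in L$; I also read the right-hand side of the conclusion as $\phi(b_i)$, correcting an evident typo. I would proceed in three steps.

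First, I would verify that any maximal gDFF is nondecreasing on $\mathbb{R}$. Theorem~\ref{thm:maximality-general} gives that $\phi$ is superadditive, $\phi(0)=0$, and $\phi(x)\ge 0$ on some right neighborhood $(0,\epsilon)$. For any $z>0$, choose $N\in\mathbb{N}$ large enough that $z/N\in(0,\epsilon)$; iterating superadditivity yields $\phi(z)\ge N\,\phi(z/N)\ge 0$. Hence $\phi\ge 0$ on $[0,\infty)$. For any $y\ge x$ in $\mathbb{R}$, superadditivity then gives $\phi(y)=\phi(x+(y-x))\ge \phi(x)+\phi(y-x)\ge \phi(x)$, establishing monotonicity on all of $\mathbb{R}$.

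Second, fix $x\in L$ and a row index $i$. Since each $x_j$ is a nonnegative integer, I can rewrite $a_{ij}x_j$ as the sum of $x_j$ copies of $a_{ij}$ and then apply iterated superadditivity across all these copies and across $j$:
\begin{equation*}
\sum_{j=1}^{n} x_j\,\phi(a_{ij}) \;=\; \sum_{j=1}^{n}\sum_{k=1}^{x_j}\phi(a_{ij}) \;\le\; \phi\Bigl(\sum_{j=1}^{n}\sum_{k=1}^{x_j} a_{ij}\Bigr) \;=\; \phi\Bigl(\sum_{j=1}^{n} x_j a_{ij}\Bigr).
\end{equation*}

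Third, since $\sum_j a_{ij}x_j\le b_i$ by definition of $L$, monotonicity yields $\phi\bigl(\sum_j x_j a_{ij}\bigr)\le \phi(b_i)$, which chains with the previous display to give the desired inequality $\sum_{j=1}^{n} \phi(a_{ij})\,x_j\le \phi(b_i)$. The only step requiring real care is the monotonicity argument, since Theorem~\ref{thm:maximality-general} asserts non-negativity only on a right neighborhood of $0$; everything else is routine bookkeeping once the integer multiplicities $x_j$ are unpacked into repeated summands.
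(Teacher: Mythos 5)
Your proof is correct and takes essentially the approach the paper intends: the paper cites this result from the monograph without reproducing a proof, but introduces it by saying it ``utilizes the fact that maximal gDFFs are superadditive and nondecreasing,'' which is exactly your combination of iterated superadditivity over the integer multiplicities $x_j$ and monotonicity applied to $\sum_j a_{ij}x_j\le b_i$. Your preliminary step upgrading the local nonnegativity of Theorem~\ref{thm:maximality-general} to nonnegativity on $[0,\infty)$ and hence monotonicity is the right way to fill in the one detail the paper leaves implicit, and your reading of $\phi(b_i)$ for the typo $\phi(b_j)$ is the evident intended statement.
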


In the book \cite{alves-clautiaux-valerio-rietz-2016:dual-feasible-book}, the authors include several known gDFFs. We will use the term ``piecewise linear" throughout the paper without explanation. We refer readers to \cite{hong-koeppe-zhou:software-paper} for precise definitions of  ``piecewise linear" functions in both continuous and discontinuous cases. Although piecewise linearity is not implied in the definition of gDFF, nearly all known gDFFs are piecewise linear. 
\smallbreak

\section{Characterization of maximal general DFFs}
\label{s:characterization}
%We first show that every valid general DFF is dominated by a maximal general DFF, by using Zorn's Lemma.

Alves et al. \cite{alves-clautiaux-valerio-rietz-2016:dual-feasible-book} provided several sufficient conditions and necessary conditions of maximal gDFFs in Theorem \ref{thm:maximality-general}, but they do not match precisely. Inspired by the characterization of minimal 
cut-generating functions in the Y{\i}ld{\i}z--Cornu\'ejols model \cite{yildiz2016cut}, we complete the characterization
of maximal gDFFs. %missing in \cite{alves-clautiaux-valerio-rietz-2016:dual-feasible-book}.
\begin{proposition}
A function $\phi\colon \mathbb{R}\to\mathbb{R}$ is a maximal gDFF if and only if the following conditions hold:
 \begin{enumerate}[(i)]
  \item[(i)] 
  $\phi(0)=0$.
      \item[(ii)] 
   $\phi$ is superadditive.
   \item[(iii)] 
   $\phi(x)\ge 0$ for all $x\in \mathbb{R}_+$.
   \item[(iv)]
   $\phi$ satisfies the generalized symmetry condition in the sense  \\$\phi(r)=\inf_{k}\{\frac{1}{k}(1-\phi(1-kr)): k\in \mathbb{Z}_+\}$.
    \end{enumerate}

\end{proposition}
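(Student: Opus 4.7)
The plan is to prove both implications, leveraging the partial characterization in Theorem~\ref{thm:maximality-general}. For necessity, assume $\phi$ is a maximal gDFF. Conditions (i) and (ii) are already stated there. For (iii), I promote the local nonnegativity in that theorem (there exists $\epsilon>0$ with $\phi\ge 0$ on $(0,\epsilon)$) to all of $\mathbb{R}_+$: given $x>0$, pick $n$ so that $x/n\in(0,\epsilon)$; then superadditivity yields $\phi(x)\ge n\phi(x/n)\ge 0$. As a byproduct, $\phi$ is nondecreasing on $\mathbb{R}$: for $y>x$, superadditivity gives $\phi(y)\ge\phi(x)+\phi(y-x)\ge\phi(x)$. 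For (iv), applying the gDFF inequality to the list of $k$ copies of $r$ together with $1-kr$ (which sums to $1$) yields $k\phi(r)+\phi(1-kr)\le 1$, so $\phi(r)\le\inf_{k\in\mathbb{Z}_+}\frac{1}{k}(1-\phi(1-kr))$.

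For the reverse inequality in (iv), I argue by contradiction: if it is strict at some $r_0$, define $\phi'$ to agree with $\phi$ off $r_0$ and set $\phi'(r_0):=\inf_{k}\frac{1}{k}(1-\phi(1-kr_0))$, so that $\phi'$ strictly dominates $\phi$. Verifying $\phi'$ is itself a gDFF then contradicts maximality. Given any finite family $(x_i)$ with $\sum_i x_i\le 1$, separate the $k$ occurrences of $r_0$ from the remaining entries $y_1,\dots,y_m$ with $y_j\ne r_0$. The case $k=0$ is the gDFF inequality for $\phi$; for $k\ge 1$, superadditivity and the nondecreasing property yield $\sum_j\phi(y_j)\le\phi\bigl(\sum_j y_j\bigr)\le\phi(1-kr_0)$, while $k\phi'(r_0)\le 1-\phi(1-kr_0)$ holds by construction. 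Adding these gives $k\phi'(r_0)+\sum_j\phi'(y_j)\le 1$, as required.

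For sufficiency, assume (i)--(iv). The same argument shows $\phi$ is nondecreasing on $\mathbb{R}$. Setting $r=k=1$ in (iv) gives $\phi(1)\le 1-\phi(0)=1$, and for any finite family with $\sum_i x_i\le 1$, superadditivity and monotonicity yield $\sum_i\phi(x_i)\le\phi(\sum_i x_i)\le\phi(1)\le 1$, so $\phi$ is a gDFF. To confirm maximality, let $\phi'$ be any gDFF with $\phi'\ge\phi$; applying its defining inequality to $k$ copies of $r$ and $1-kr$ yields $\phi'(r)\le\frac{1}{k}(1-\phi'(1-kr))\le\frac{1}{k}(1-\phi(1-kr))$, and taking the infimum over $k$ together with (iv) gives $\phi'(r)\le\phi(r)$, so $\phi'=\phi$. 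The main obstacle is the gDFF verification of the lifted function $\phi'$ in the necessity direction, which depends essentially on the nondecreasing property derived from (ii) and (iii); the rest of the argument is a direct manipulation of the definitions.
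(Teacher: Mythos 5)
Your proposal is correct and follows essentially the same route as the paper's proof: the upper bound in (iv) via $k$ copies of $r$ together with $1-kr$, the single-point lift $\phi'$ (the paper's $\phi_1$) verified to be a gDFF using superadditivity and monotonicity to reach a contradiction with maximality, and the domination argument for sufficiency (which the paper phrases contrapositively). Your explicit promotion of the local nonnegativity from Theorem~\ref{thm:maximality-general} to all of $\mathbb{R}_+$ is a detail the paper leaves implicit, but it is not a different approach.
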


\begin{proof}
Suppose $\phi$ is a maximal gDFF, then conditions $(i),(ii),(iii)$ hold by \autoref{thm:maximality-general}. For any $r\in\mathbb{R}$ and $k\in \mathbb{Z}_+$, $kr+(1-kr)=1\Rightarrow k\phi(r)+\phi(1-kr)\le 1$. So $\phi(r)\le \frac{1}{k}(1-\phi(1-kr))$ for any positive integer $k$, then $\phi(r)\le\inf_{k}\{\frac{1}{k}(1-\phi(1-kr)): k\in \mathbb{Z}_+\}$. 

If there exists $r_0$ such that $\phi(r_0)<\inf_{k}\{\frac{1}{k}(1-\phi(1-kr_0))\colon k\in \mathbb{Z}_+\}$, then define a function $\phi_1$ which takes value $\inf_{k}\{\frac{1}{k}(1-\phi(1-kr_0))\colon k\in \mathbb{Z}_+\}$ at $r_0$ and $\phi(r)$ if $r\neq r_0$. We claim that $\phi_1$ is a gDFF which dominates $\phi$. 
Given a function $y\colon\mathbb{R} \to \mathbb{Z}_+, \,\text{and $y$ has finite support}$ satisfying $\sum_{r\in\mathbb{R}}r\,y(r)\le 1$.  
$\sum_{r\in\mathbb{R}}\phi_1(r)\,y(r)=\phi_1(r_0)\,y(r_0)+\sum_{r\neq r_0} \phi(r)\,y(r)$. If $y(r_0)=0$, then it is clear that $\sum_{r\in\mathbb{R}}\phi_1(r)\,y(r)\le 1$. Let $y(r_0)\in\mathbb{Z}_+$, then $\phi_1(r_0)\le \frac{1}{y(r_0)}(1-\phi(1-y(r_0)\,r_0))$ by definition of $\phi_1$, then 
\begin{equation*}
\phi_1(r_0)\, y(r_0)+\phi(1-y(r_0)\,r_0)\le 1
\end{equation*}
From the superadditive condition and increasing property, we get 
\begin{equation*}
\sum_{r\neq r_0} \phi(r)\,y(r)\le \phi(\sum_{r\neq r_0} r\,y(r)) \le \phi(1-y(r_0)\,r_0)
\end{equation*}
From the two inequalities we can conclude that $\phi_1$ is a gDFF and dominates $\phi$, which contradicts the maximality of $\phi$. Therefore, the condition $(iv)$ holds.

Suppose there is a function $\phi\colon \mathbb{R}\to\mathbb{R}$ satisfying all four conditions. Choose $r=1$ and $k=1$, we can get $\phi(1)\le 1$. Together with conditions $(i),(ii),(iii)$, it guarantees that $\phi$ is a gDFF. Assume that there is a gDFF $\phi_1$ dominating $\phi$ and there exists $r_0$ such that $\phi_1(r_0)>\phi(r_0)=\inf_{k}\{\frac{1}{k}(1-\phi(1-kr_0))\colon k\in \mathbb{Z}_+\}$. So there exists some $k\in\mathbb{Z}_+$ such that 
\begin{align*}
& \, \, \phi_1(r_0)>\frac{1}{k}(1-\phi(1-kr_0)) \\
\Leftrightarrow & \, \, k\phi_1(r_0)+\phi(1-kr_0)>1 \\
\Rightarrow & \, \, k\phi_1(r_0)+\phi_1(1-kr_0)>1
\end{align*}
The last step contradicts the fact that $\phi_1$ is a gDFF, since $kr_0+(1-kr_0)=1$. Therefore, $\phi$ is a maximal gDFF.
\end{proof}

Parallel to the restricted minimal and strongly minimal functions in the Y{\i}ld{\i}z--Cornu\'ejols model \cite{yildiz2016cut}, ``restricted maximal" and ``strongly maximal" gDFFs are defined by strengthening the notion of maximality. %and have simpler characterizations.

\begin{definition}
We say that a gDFF $\phi$ is \emph{implied via scaling} by a gDFF $\phi_1$, if $\beta\phi_1\ge \phi$ for some $0\le\beta\le 1$.
We call a gDFF $\phi\colon \mathbb{R}\to \mathbb{R}$ \emph{restricted maximal} if $\phi$ is not implied via scaling by a distinct gDFF $\phi_1$.
\end{definition}

\begin{definition}
We say that a gDFF $\phi$ is \emph{implied} by a gDFF $\phi_1$, if $\phi(x)\le \beta\phi_1(x)+\alpha x$ for some $0\le \alpha,\beta \le1$ and $\alpha+\beta\le 1$.
We call a gDFF $\phi\colon \mathbb{R}\to \mathbb{R}$ \emph{strongly maximal} if $\phi$ is not implied by a distinct gDFF $\phi_1$.
\end{definition}

Note that restricted maximal gDFFs are maximal and strongly maximal gDFFs are restricted maximal. We can simply choose $\beta=1$ and $\beta=1, \alpha=0 $, respectively. Based on the definition of strong maximality, $\phi(x)=x$ is implied by the zero function, so $\phi$ is not strongly maximal, though it is extreme. The characterizations of restricted maximal and strongly maximal gDFFs only involve the standard symmetry condition instead of the generalized symmetry condition.

\begin{theorem}
A function $\phi\colon \mathbb{R}\to\mathbb{R}$ is a restricted maximal gDFF if and only if the following conditions hold:
 \begin{enumerate}[(i)]
  \item[(i)] 
  $\phi(0)=0$.
      \item[(ii)] 
   $\phi$ is superadditive.
   \item[(iii)] 
   $\phi(x)\ge 0$ for all $x\in \mathbb{R}_+$.
   \item[(iv)]
   $\phi(x)+\phi(1-x)=1$.
    \end{enumerate}

\end{theorem}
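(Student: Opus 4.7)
The plan is to prove both directions separately, leveraging the characterization of maximal gDFFs from the preceding proposition.

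For the reverse direction, assuming conditions (i)--(iv), the first task is to verify that $\phi$ is indeed a gDFF. Taking $x=0$ in (iv) gives $\phi(1)=1$, and for $y\leq 1$, condition (iv) combined with (iii) yields $\phi(y)=1-\phi(1-y)\leq 1$ since $1-y\geq 0$. Iterating superadditivity (ii) then shows $\sum\phi(x_i)\leq \phi(\sum x_i)\leq 1$ whenever $\sum x_i\leq 1$. For restricted maximality, suppose $\beta\phi_1\geq \phi$ for some gDFF $\phi_1$ and $\beta\in[0,1]$. Evaluating at $x=1$ forces $\beta=1$ via $1=\phi(1)\leq \beta\phi_1(1)\leq \beta$, reducing to $\phi_1\geq \phi$ pointwise. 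A strict inequality $\phi_1(r_0)>\phi(r_0)$ at any point would then give $\phi_1(r_0)+\phi_1(1-r_0)>1$ by (iv), contradicting $\phi_1$'s gDFF property on the pair $(r_0,1-r_0)$ summing to $1$.

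For the forward direction, assume $\phi$ is restricted maximal. Since the case $\beta=1$ of the restricted-maximality definition is ordinary maximality, $\phi$ is maximal, and the preceding proposition delivers (i), (ii), (iii), and the generalized symmetry $\phi(r)=\inf_k\{\frac{1}{k}(1-\phi(1-kr))\colon k\in\mathbb{Z}_+\}$, which at $k=1$ yields $\phi(r)+\phi(1-r)\leq 1$. The first step toward (iv) is to establish $\phi(1)=1$: otherwise Proposition \ref{given-point} forces $\phi(x)=cx$ for some $0\leq c<1$, but then $\phi_1(x)=x$ is a gDFF and $\beta=c$ satisfy $\beta\phi_1=\phi$ with $\phi_1\neq\phi$, contradicting restricted maximality.

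The main step, and principal obstacle, is ruling out strict inequality $\phi(r_0)+\phi(1-r_0)<1$. Supposing this happens at some $r_0$, the infimum in the generalized symmetry must be smaller than the $k=1$ value, so it is attained at some $k^*\geq 2$ or only approached in the limit. In the attained case, setting $s_0=1-k^*r_0$ gives $\phi(s_0)=1-k^*\phi(r_0)$; combining with superadditivity applied to $1=s_0+k^*r_0$ and the inequality $\phi(k^*r_0)\geq k^*\phi(r_0)$ forces the chain $1\geq \phi(s_0)+\phi(k^*r_0)\geq 1$ to be tight, yielding $\phi(k^*r_0)=k^*\phi(r_0)$. Then applying superadditivity to the decomposition $1-r_0=s_0+(k^*-1)r_0$ gives $\phi(1-r_0)\geq \phi(s_0)+(k^*-1)\phi(r_0)=1-\phi(r_0)$, contradicting the strict inequality. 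In the limit case, Proposition \ref{limit-slope} forces $\phi(r_0)=tr_0$ with $t\geq 1$ (the case $t<1$ would make $\phi(x)=tx$, already excluded above), and the same proposition provides $\phi(1-r_0)\geq t(1-r_0)-(t-1)=1-tr_0$, so the sum is again $\geq 1$.
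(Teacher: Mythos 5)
Your proposal is correct, and while its skeleton (deduce maximality, invoke the generalized symmetry characterization, then split on whether the infimum is attained) matches the paper's proof, your handling of the unattained case is genuinely different. The paper stays entirely inside superadditivity: it first proves $\phi(kx)=k\phi(x)$ for every positive integer $k$ (via the subsequence $k_\epsilon\to\infty$ together with a division-with-remainder argument), and then applies superadditivity to the decomposition $kx=(k-1)\cdot 1+(1-k(1-x))$ and the generalized symmetry condition \emph{at the point $1-x$} to get $1-\phi(x)\le \inf_k\frac{1}{k}(1-\phi(1-k(1-x)))=\phi(1-x)$. You instead import Proposition \ref{limit-slope}: the two-sided bound $tx-\max\{0,t-1\}\le\phi(x)\le tx$ gives $\lim_{k\to\infty}\frac{1}{k}(1-\phi(1-kr_0))=tr_0$, so an infimum that is only approached along $k\to\infty$ must equal $tr_0$; then $t\ge 1$ (else $\phi$ would be linear, excluded once $\phi(1)=1$ is established) and the lower bound at $1-r_0$ gives $\phi(1-r_0)\ge 1-tr_0$, closing the case in three lines. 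Your route is shorter and sidesteps the paper's delicate $\phi(kx)=k\phi(x)$ lemma, at the price of using Proposition \ref{limit-slope} as a black box, whereas the paper's argument is self-contained modulo $\phi(1)=1$. Two small polish points: the assertion that Proposition \ref{limit-slope} ``forces'' $\phi(r_0)=tr_0$ deserves its one-line justification (an unattained infimum of $a_k=\frac{1}{k}(1-\phi(1-kr_0))$ is approached only along unbounded $k$, hence equals $\lim_{k\to\infty}a_k=tr_0$, computed from both bounds of the proposition and valid for either sign of $r_0$); and in the attained case the intermediate identity $\phi(k^*r_0)=k^*\phi(r_0)$ is correct but superfluous, since the final chain only needs $\phi(s_0)=1-k^*\phi(r_0)$ and superadditivity. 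Your converse direction, which the paper dismisses as easy and omits, is complete and correct.
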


\begin{proof}
It is easy to show that $\phi$ is valid and restricted maximal if $\phi$ satisfies conditions $(i-iv)$.
Suppose $\phi$ is a restricted maximal gDFF, then we only need to prove condition $(iv)$, since restricted maximality implies maximality.

Suppose there exists some $x$ such that $\phi(x)+\phi(1-x)<1$. By the characterization of maximality, $\phi(x)=\inf_{k}\{\frac{1}{k}(1-\phi(1-kx))\colon k\in \mathbb{Z}_+\}$.

\emph{Case 1:} there exists some $k\in \mathbb{N}$ such that $\phi(x)=\frac{1}{k}(1-\phi(1-kx))$. By superadditivity $k\phi(x)=1-\phi(1-kx)=1-\phi(1-x-(k-1)x)\ge 1-\phi(1-x)+\phi((k-1)x)\ge 1-\phi(1-x)+(k-1)\,\phi(x)$, which implies $\phi(x)+\phi(1-x)\ge1$, in contradiction to the assumption above.

\emph{Case 2:} for any $\epsilon>0$, there exists a corresponding $k_\epsilon\in \mathbb{N}$, such that 
$$\phi(x)<\frac{1}{k_\epsilon}(1-\phi(1-k_\epsilon x))<\phi(x)+\epsilon$$
Then $\phi(k_\epsilon x)\le 1-\phi(1-k_\epsilon x)<k_\epsilon\phi(x)+k_\epsilon\epsilon$ or equivalently $\frac{\phi(k_\epsilon x)}{k_\epsilon}<\phi(x)+\epsilon$. Since $\phi$ is superadditive, $\phi(x)\le\frac{\phi(k_\epsilon x)}{k_\epsilon}$. Let $\epsilon$ go to $0$ in the inequality $\phi(x)\le\frac{\phi(k_\epsilon x)}{k_\epsilon}<\phi(x)+\epsilon$, and we have $\lim_{\epsilon\to 0}\frac{\phi(k_\epsilon x)}{k_\epsilon}=\phi(x)$.
It is easy to see that $\lim_{\epsilon\to 0}k_\epsilon=+\infty$.

Next, we will show that $\phi(kx)=k\phi(x)$ for any positive integer $k$. Suppose $\bar{k}$ is the smallest integer such that $\frac{\phi(\bar{k}x)}{\bar{k}}=\phi(x)+\delta$ for some $\delta>0$. Then for any $i\ge \bar{k}$, there exist $\lambda_i, r_i\in\mathbb{Z_+}$, such that $i=\lambda_i\bar{k}+r_i$, $0\le r_i<\bar{k}$. Then 
\begin{align*}
 \phi(ix) = & \, \, \phi(\lambda_i\bar{k}x+r_ix)
\ge  \lambda_i\phi(\bar{k}x)+\phi(r_ix)\\
\ge & \, \, \lambda_i\bar{k}\phi(x)+\lambda_i\bar{k}\delta+r_i\phi(x)
=  i\phi(x)+ (i-r_i)\delta
 \end{align*}
 Therefore $\frac{\phi(ix)}{i}\ge \phi(x)+\delta-\frac{r_i}{i}\delta$ for any $i\ge \bar{k}$. Since $r_i$ is bounded, $\frac{\phi(ix)}{i}\ge \phi(x)+\frac{\delta}{2}$ for any $i\ge 2\bar{k}$, which contradicts $\lim_{\epsilon\to 0}\frac{\phi(k_\epsilon x)}{k_\epsilon}=\phi(x)$. Therefore $\phi(kx)=k\phi(x)$ for any positive integer $k$.
 From Proposition \ref{given-point} we know $\phi(1)=1$.
\begin{align*}
 k\phi(x)=& \, \, \phi(kx)\ge (k-1)\phi(1)+\phi(1-k(1-x))\\
%\Leftrightarrow & \, \, k-k\phi(x)\le  1-\phi(1-k(1-x))\\
\Leftrightarrow & \, \, 1-\phi(x) \le  \frac{1-\phi(1-k(1-x))}{k}\\
\Rightarrow & \, \, 1-\phi(x)\le  \inf_k\frac{1-\phi(1-k(1-x))}{k}=\phi(1-x)
\end{align*} 
The above inequality contradicts our original assumption.

In both cases, we have a contradiction if $\phi(x)+\phi(1-x)<1$. Therefore, $\phi(x)+\phi(1-x)=1$, which completes the proof.
\end{proof}

\begin{remark}
\label{rmk}
Let $\phi$ be a maximal gDFF that is not linear,
we know that $\phi(1)=1$ from Proposition \ref{given-point}. 
If $\phi$ is implied via scaling by a gDFF $\phi_1$, or equivalently $\beta\phi_1\ge \phi$ for some $0\le\beta\le 1$,
 then $\beta\phi_1(1)\ge \phi(1)$. Then $\beta=1$ and $\phi$ is dominated by $\phi_1$. The maximality of $\phi$ implies $\phi=\phi_1$, so $\phi$ is restricted maximal. Therefore, we have a simpler version of characterization of maximal gDFFs. 
\end{remark}

\begin{theorem}
A function $\phi\colon \mathbb{R}\to \mathbb{R}$ is a maximal gDFF if and only if the following conditions hold:
 \begin{enumerate}[(i)]
  \item[(i)] 
  $\phi(0)=0$.
      \item[(ii)] 
   $\phi$ is superadditive.
   \item[(iii)] 
   $\phi(x)\ge 0$ for all $x\in \mathbb{R}_+$.
   \item[(iv)]
   $\phi(x)+\phi(1-x)=1$ or $\phi(x)=a x$, $0\le a< 1$.
    \end{enumerate}

\end{theorem}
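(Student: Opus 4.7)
The plan is to derive this statement as a direct corollary of the preceding characterization of restricted maximal gDFFs together with Remark \ref{rmk}. Conditions (i)--(iii) are already known to be necessary from \autoref{thm:maximality-general}, so the entire content of the theorem is packed into the dichotomy of condition (iv): either $\phi$ is linear with slope in $[0,1)$, or $\phi$ satisfies the standard symmetry relation $\phi(x)+\phi(1-x)=1$.

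For the forward direction, I would assume $\phi$ is a maximal gDFF. If $\phi$ happens to be of the form $\phi(x) = ax$ for some constant $a$, then (iii) forces $a \ge 0$, and applying the gDFF defining inequality to the singleton $x_1 = 1$ forces $\phi(1) = a \le 1$; in the boundary case $a = 1$ we land in the symmetry disjunct (since then $\phi(x) + \phi(1-x) = x + (1-x) = 1$), while otherwise $a \in [0,1)$ yields the linear disjunct directly. If instead $\phi$ is not of that linear form, Remark \ref{rmk} asserts that $\phi$ is automatically restricted maximal, so the preceding characterization theorem for restricted maximal gDFFs furnishes $\phi(x) + \phi(1-x) = 1$.

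For the backward direction, I would split on the two sub-cases of (iv). If $\phi$ satisfies $\phi(x) + \phi(1-x) = 1$ together with (i)--(iii), then the hypotheses of the restricted-maximal characterization theorem are fulfilled, so $\phi$ is restricted maximal and a fortiori maximal. If instead $\phi(x) = ax$ with $0 \le a < 1$, the Remark following \autoref{thm:maximality-general} already records that such $\phi$ is a maximal gDFF, so we are done.

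I do not anticipate a serious obstacle here: the real work has been done in the characterization of restricted maximal gDFFs and in Remark \ref{rmk}, which isolates the family $\phi(x) = ax$ with $0 \le a < 1$ as the only maximal gDFFs that can fail the symmetry condition. The only small care needed is to verify that the dichotomy in (iv) is exhaustive and compatible with (i)--(iii), which is exactly what the case analysis above accomplishes.
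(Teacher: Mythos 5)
Your proposal is correct and follows exactly the route the paper intends: the theorem appears immediately after Remark~\ref{rmk} as a corollary of the restricted-maximal characterization (symmetry case), Remark~\ref{rmk} itself (nonlinear maximal $\Rightarrow$ restricted maximal), and the earlier remark that $\phi(x)=cx$, $0\le c<1$, is maximal. Your case analysis, including the observation that $a=1$ folds into the symmetry disjunct, is precisely the argument the paper leaves implicit.
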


\begin{theorem}
\label{strongly-maximal}
A function $\phi\colon \mathbb{R}\to \mathbb{R}$ is a strongly maximal gDFF if and only if $\phi$ is a restricted maximal gDFF and $\lim_{\epsilon\to0^+}\frac{\phi(\epsilon)}{\epsilon}=0$.
\end{theorem}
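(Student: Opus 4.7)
The plan is an if-and-only-if proof relying on one auxiliary observation about maximal gDFFs. The auxiliary fact is that for any maximal gDFF $\phi$, the limit $\lim_{\epsilon\to 0^+}\phi(\epsilon)/\epsilon$ exists and equals $\inf_{x>0}\phi(x)/x$: for fixed $x_0>0$ one writes $x_0 = n\epsilon + r$ with $n = \lfloor x_0/\epsilon\rfloor$ and uses superadditivity together with $\phi \ge 0$ on $\mathbb{R}_+$ to get $n\phi(\epsilon) \le \phi(x_0)$, so $\limsup_{\epsilon\to 0^+}\phi(\epsilon)/\epsilon \le \phi(x_0)/x_0$, and the trivial $\liminf \ge \inf$ closes the loop. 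This lemma makes the limit condition in the theorem equivalent to $\inf_{x>0}\phi(x)/x = 0$.

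For the forward direction, strong maximality immediately implies restricted maximality (take $\alpha=0$ in the definition of ``implied''). For the limit, I would argue by contrapositive: suppose $s := \inf_{x>0}\phi(x)/x > 0$. First observe $\phi$ cannot be the identity $x$, since the identity is not strongly maximal (as noted after the definition); so $\phi \ne x$, and Proposition \ref{given-point} gives $\phi(1)=1$, whence $s \le 1$. Pick any $\alpha \in (0,\min(s,1))$, set $\beta = 1-\alpha$, and define
\[
  \phi_1(x) := \frac{\phi(x) - \alpha x}{1-\alpha}.
\]
Routine verification of the four conditions of the restricted maximal characterization shows $\phi_1$ is a restricted maximal gDFF: superadditivity is preserved by this affine transformation; $\phi_1 \ge 0$ on $\mathbb{R}_+$ because $\phi(x) \ge sx \ge \alpha x$ there; and the symmetry of $\phi_1$ follows from that of $\phi$. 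Moreover $\phi_1 \ne \phi$ (since $\phi_1 = \phi$ would force $\phi(x)=x$), and writing $\phi = \beta \phi_1 + \alpha x$ with $\alpha + \beta = 1$ shows $\phi$ is implied by the distinct gDFF $\phi_1$, contradicting strong maximality.

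For the reverse direction, assume $\phi$ is restricted maximal and $\lim_{\epsilon\to 0^+}\phi(\epsilon)/\epsilon = 0$, and suppose $\phi \le \beta\phi_1 + \alpha x$ for some distinct gDFF $\phi_1$ with $\alpha,\beta \in [0,1]$ and $\alpha+\beta \le 1$. First observe that $\beta\phi_1 + \alpha x$ is itself a gDFF (sum the defining inequalities with weights $\beta, \alpha$), so maximality of $\phi$ upgrades the inequality to the equality $\phi = \beta \phi_1 + \alpha x$. Substituting $x = 1$ together with $\phi(1) = 1$ (note $\phi \ne x$, since the limit would then be $1$) and $\phi_1(1) \le 1$ forces $\alpha+\beta = 1$ and, when $\beta > 0$, $\phi_1(1) = 1$; the boundary case $\beta = 0$ gives $\phi = \alpha x$, which by the symmetry condition forces $\alpha=1$ and hence $\phi = x$, contradicting the limit hypothesis. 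We may also assume $\phi_1$ is maximal, since if $\phi_1 \le \phi_1'$ for a maximal $\phi_1' \ne \phi_1$ the same argument yields $\phi = \beta\phi_1' + \alpha x$, and the only bad subcase $\phi_1' = \phi$ again collapses to $\phi=x$ or $\phi_1 = \phi$. Finally, applying the auxiliary lemma to the maximal $\phi_1$, the limit $s_1 := \lim_{\epsilon\to 0^+}\phi_1(\epsilon)/\epsilon$ exists and is nonnegative; dividing $\phi(\epsilon) = \beta \phi_1(\epsilon) + \alpha\epsilon$ by $\epsilon$ and letting $\epsilon \to 0^+$ yields $0 = \beta s_1 + \alpha$ with all three summands nonnegative, so $\alpha = 0$ and $\beta = 1$, whence $\phi_1 = \phi$, contradicting distinctness.

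The principal obstacle is the auxiliary limit-equals-infimum lemma for maximal gDFFs; without it, the clean passage $\epsilon \to 0^+$ at the end of the reverse direction breaks and one is left with awkward $\liminf/\limsup$ bookkeeping, and the forward-direction construction $\phi_1 = (\phi - \alpha x)/(1-\alpha)$ loses its clean non-negativity guarantee. A secondary subtlety is the WLOG-maximal reduction for $\phi_1$ in the reverse direction, where the branch $\phi_1' = \phi$ must be ruled out separately; beyond these, everything reduces to verifying the four restricted-maximal conditions for the scaled function $\phi_1$, which is routine.
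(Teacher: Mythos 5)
Your proof is correct, and although it shares the paper's overall skeleton (a perturbation of the form $(\phi(x)-\alpha x)/(1-\alpha)$ in the forward direction; evaluation at $x=1$ plus ``maximality upgrades the inequality to an equality'' in the reverse), it differs in two substantive ways, both to your advantage. First, your auxiliary lemma --- that for any maximal gDFF the limit $\lim_{\epsilon\to0^+}\phi(\epsilon)/\epsilon$ exists and equals $\inf_{x>0}\phi(x)/x$ --- does not appear in the paper; the paper instead proves $\liminf=0$ and $\limsup=0$ by two separate ad hoc arguments (the latter via sequences $(x_n)$, $(y_n)$ and superadditivity/monotonicity). Your lemma buys a cleaner forward direction: with $s$ the global infimum you get $\phi(x)\ge sx$ on all of $\mathbb{R}_+$, so nonnegativity of $\phi_1=(\phi-\alpha x)/(1-\alpha)$ is immediate, whereas the paper only has $\phi\ge s'x$ on a neighborhood $[0,\delta]$ and leaves the global nonnegativity of its perturbed function unstated. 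Second, and more importantly, your reverse direction repairs a genuine gap in the paper's: the paper concludes $\alpha=0$ by ``dividing by $x$ and taking the $\liminf$'' in $\phi=(1-\alpha)\phi_1+\alpha x$, which tacitly requires $\liminf_{x\to 0^+}\phi_1(x)/x\ge 0$ for the gDFF $\phi_1$; but validity constraints only bound gDFFs from above (any pointwise decrease of a gDFF is still a gDFF, e.g.\ $\phi_1(x)=-|x|$ is valid), so this lower bound is not automatic, and indeed plugging $\phi_1=(\phi-\alpha x)/(1-\alpha)$ back in makes the paper's displayed identity vacuously true rather than a contradiction. Your reduction to a maximal dominator $\phi_1'$ via Theorem~\ref{existence}(i), with the branch $\phi_1'=\phi$ handled separately (collapsing to $\phi=x$ or $\phi_1=\phi$), supplies exactly the missing nonnegativity $s_1\ge 0$ needed to force $\alpha=0$. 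An even shorter patch, for what it is worth: validity of $\phi_1=(\phi-\alpha x)/(1-\alpha)$ on the single-point collection $\{1-\epsilon\}$, combined with the symmetry of $\phi$, gives $\phi(\epsilon)\ge\alpha\epsilon$ for all $\epsilon>0$, whence $\alpha=0$ directly from the limit hypothesis.
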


\begin{proof}
To prove the ``only if" part, we only need to show $\lim_{\epsilon\to0^+}\frac{\phi(\epsilon)}{\epsilon}=0$ for a strongly maximal gDFF $\phi$. We first show that $\liminf_{\epsilon\to0^+}\frac{\phi(\epsilon)}{\epsilon}=0$. It is clear  that $\liminf_{\epsilon\to0^+}\frac{\phi(\epsilon)}{\epsilon}\ge0$ since $\phi$ is restricted maximal. Assume  $\liminf_{\epsilon\to0^+}\frac{\phi(\epsilon)}{\epsilon}=s>0$, then there exist $\delta>0$ and $s'<s$ (small enough) such that $\phi(x)\ge  s'x$ for $x\in [0,\delta]$.  Define a new function $\phi_1(x)=\frac{\phi(x)-s'x}{1-s'}$ and $\phi$ is implied by $\phi_1$. Note that $\phi_1$ is a restricted maximal gDFF. The strong maximality of $\phi$ implies $\phi_1(x)=\phi(x)=x$. Therefore, $\phi(x)=x$ is not strongly maximal.

Next we show that $\lim_{\epsilon\to0^+}\frac{\phi(\epsilon)}{\epsilon}=0$. Suppose $\limsup_{\epsilon\to0^+}\frac{\phi(\epsilon)}{\epsilon}=3s>0$. There exist two positive and decreasing sequences $(x_n)_{n=1}^{\infty}$ and $(y_n)_{n=1}^{\infty}$ approaching $0$, such that $\phi(x_n)> 2sx_n$ and $\phi(y_n)< sy_n$. Fix $y_1$ and choose $0<x_n<y_1$ and $k\in\mathbb{Z}_{++}$ such that $y_1\ge kx_n\ge \frac{y_1}{2}$. Since $\phi$ is  superadditive and nondecreasing, $\phi(y_1)\ge\phi(kx_n)\ge k\phi(x_n)>2ksx_n\ge sy_1$, which contradicts the choice of $y_1$. $\limsup_{\epsilon\to0^+}\frac{\phi(\epsilon)}{\epsilon}=\liminf_{\epsilon\to0^+}\frac{\phi(\epsilon)}{\epsilon}=0$, so $\lim_{\epsilon\to0^+}\frac{\phi(\epsilon)}{\epsilon}=0$ for a strongly maximal gDFF $\phi$.

As for the ``if" part, we assume $\phi$ is restricted maximal and $\lim_{\epsilon\to0^+}\frac{\phi(\epsilon)}{\epsilon}=0$. Suppose $\phi$ is implied by a gDFF $\phi_1$ meaning $\phi(x)\le \beta\phi_1(x)+\alpha x$ and $\beta,\alpha\ge 0, \beta+\alpha\le 1$. Let $x=1$, $1\le \beta \phi_1(1)+\alpha\le \beta+\alpha\le 1$. We know that $\beta=1-\alpha$. Note that $\beta\phi_1(x)+\alpha x$ is also a gDFF (a convex combination of two gDFFs $\phi_1$ and $x$), then $\phi(x)= (1-\alpha)\phi_1(x)+\alpha x$ due to maximality of $\phi$. Divide by $x$ from the above equation and take the $\liminf$ as $x\to 0^+$, we can conclude $\alpha=0$. So $\phi$ is strongly maximal.

\end{proof}

The following theorem indicates that maximal, restricted maximal and strongly maximal gDFFs  exist, and they are potentially stronger than just valid gDFFs. The proof is analogous to the proof of \cite[Theorem 1, Proposition 6, Theorem 9]{yildiz2016cut} and is therefore omitted.
 (See Appendix \ref{s:proof-existence} for the proof.)
\begin{theorem}
\label{existence}
\begin{enumerate}[(i)]
  \item[(i)] 
 Every gDFF is dominated by a maximal gDFF.
       \item[(ii)] 
  Every gDFF is implied via scaling by a restricted maximal gDFF.
  \item[(iii)]
     Every nonlinear gDFF is implied by a strongly maximal gDFF.
       \end{enumerate}
\end{theorem}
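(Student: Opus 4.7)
The plan is to prove all three parts by Zorn's lemma arguments on suitably chosen posets of gDFFs, paralleling the scheme of Y{\i}ld{\i}z--Cornu\'ejols \cite{yildiz2016cut}, but with the gDFF characterizations from Section~\ref{s:characterization} taking the place of the cut-generating minimality conditions.

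For part~(i), I would let $\mathcal F_\phi := \{\,\psi : \psi \text{ is a gDFF with } \psi \ge \phi\,\}$ ordered by pointwise domination. Given a chain $(\psi_\alpha)_\alpha$ in $\mathcal F_\phi$, define $\psi^\ast(x) := \sup_\alpha \psi_\alpha(x)$. Pointwise finiteness follows from the gDFF bound $\psi_\alpha(x) + \psi_\alpha(1-x) \le 1$ together with $\psi_\alpha(1-x) \ge \phi(1-x)$, yielding $\psi_\alpha(x) \le 1 - \phi(1-x)$ uniformly in $\alpha$. To verify that $\psi^\ast$ is a gDFF, fix a finite index set $I$ with $\sum_{i \in I} x_i \le 1$; the total order of the chain and the finiteness of $I$ let me choose, for each $\epsilon > 0$, some $\alpha_\epsilon$ with $\psi_{\alpha_\epsilon}(x_i) > \psi^\ast(x_i) - \epsilon$ for every $i \in I$, hence $\sum_i \psi^\ast(x_i) - |I|\epsilon < \sum_i \psi_{\alpha_\epsilon}(x_i) \le 1$, and letting $\epsilon \to 0$ gives the desired inequality. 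Thus $\psi^\ast \in \mathcal F_\phi$ is an upper bound, Zorn applies, and any maximal element is a maximal gDFF dominating $\phi$.

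For part~(ii), I would reduce directly to part~(i). Apply~(i) to obtain a maximal gDFF $\psi$ with $\psi \ge \phi$. By Proposition~\ref{given-point}, either $\psi$ is nonlinear or $\psi(x) = ax$ with $0 \le a < 1$. In the first case, Remark~\ref{rmk} gives that $\psi$ is already restricted maximal, and it implies $\phi$ via scaling with $\beta = 1$. In the second, $\phi(x) \le ax$, and since the identity $\mathrm{id}(x) = x$ is easily checked to be a restricted maximal gDFF (all four conditions hold trivially), $\phi$ is implied via scaling by $\mathrm{id}$ with $\beta = a$.

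For part~(iii), start from a restricted maximal $\psi_0$ implying $\phi$ via scaling (from~(ii)). If $\lim_{\epsilon \to 0^+} \psi_0(\epsilon)/\epsilon = 0$ then $\psi_0$ is strongly maximal by Theorem~\ref{strongly-maximal} and we are done, since ``implied via scaling'' is the special case $\alpha = 0$ of ``implied''. Otherwise, the lifting construction in the proof of Theorem~\ref{strongly-maximal}, $\psi \mapsto (\psi - s'x)/(1-s')$ for $0 < s' < s := \lim \psi_0(\epsilon)/\epsilon$, produces a restricted maximal gDFF that implies $\psi_0$ (and hence $\phi$) while strictly decreasing the limiting slope. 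Zorn's lemma applied to the poset of restricted maximal gDFFs implying $\phi$ (ordered by the implied-by relation) then produces a maximal, hence strongly maximal, upper bound. The main obstacle is the upper-bound step for chains under this weaker preorder: since the chain elements no longer dominate $\phi$, the pointwise bound from~(i) does not apply directly. The fix is to combine the intrinsic gDFF bound $\psi_\alpha(x) \le 1 - \psi_\alpha(1-x)$ with a lower bound extracted from the implication relation (using the compactness of $\{(\alpha,\beta): \alpha,\beta \ge 0,\, \alpha+\beta \le 1\}$), after which the gDFF-preservation argument from~(i) applies verbatim.
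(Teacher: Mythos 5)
Your parts (i) and (ii) are correct and essentially the paper's own argument: Zorn's lemma on the pointwise-dominance poset with the chain's pointwise supremum as upper bound, then reduction of (ii) to (i) via Remark~\ref{rmk} and the identity function. (Your case split in (ii) omits the possibility $\psi(x)=x$, which Proposition~\ref{given-point} does not exclude, but that case is harmless since the identity is itself restricted maximal.)

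Part (iii), however, has a genuine gap, located exactly where the paper does its real work. The paper does not iterate: for a restricted maximal $\phi$ that is not strongly maximal it takes $s=\lim_{\epsilon\to 0^+}\phi(\epsilon)/\epsilon\in(0,1)$ and lifts in \emph{one} step to $\phi_1=(\phi-sx)/(1-s)$; the crux is then proving $\phi(x)\ge sx$ for $x\ge 0$ near $0$, so that $\phi_1$ is nonnegative near $0$ and hence a maximal --- indeed strongly maximal --- gDFF by Theorem~\ref{thm:maximality-general} and Theorem~\ref{strongly-maximal}. That inequality does \emph{not} follow from the definition of the limit (which only gives $\phi(x)\ge s'x$ near $0$ for each $s'<s$, on neighborhoods that may shrink as $s'\to s$); the paper proves it by a separate superadditivity-plus-monotonicity argument with a sequence $kx_n$ approximating a putative counterexample point. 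Your proposal subtracts only $s'<s$ at each step precisely to dodge this, but then no finite number of steps reaches limiting slope $0$, and everything rests on the chain step of your Zorn argument --- which is where the proposal breaks. The claim that the gDFF-preservation argument from (i) ``applies verbatim'' is false: that argument requires the chain to be pointwise nondecreasing, while under the implied-by preorder it is not; indeed $(\psi-s'x)/(1-s')-\psi=\frac{s'}{1-s'}\left(\psi(x)-x\right)$, and for a nonlinear restricted maximal $\psi$ the symmetry condition forces $\psi(x)-x$ to take both signs, so each lifting step moves the function \emph{down} on part of the line. Worse, the liftings compose: after $n$ steps you have $\psi_n=(\psi_0-c_nx)/(1-c_n)$ with $1-c_n=\prod_i(1-s_i')$, so the only possible limit along your chain, if the limiting slope is to reach $0$, is $(\psi_0-sx)/(1-s)$ --- exactly the paper's function --- and certifying that this limit is even a valid gDFF is equivalent to the nonnegativity claim you never address. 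The Zorn detour therefore postpones the crux but cannot avoid it, and the sketched fix (compactness of the $(\alpha,\beta)$-simplex) does not supply it. There are also secondary holes you would need to close: Zorn over a preorder yields maximality only up to equivalence, so you must show the lifted function is not equivalent to $\psi$, and you must show that a maximal element resists implication by arbitrary gDFFs, not merely by the restricted maximal members of your poset. The paper's one-step construction avoids all of this.
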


\smallbreak

\section{Relation to cut-generating functions}
\label{s:YC}

We define an infinite dimensional space $Y$ called ``the space of nonbasic variables" as $Y=\{y : \,y\colon\mathbb{R} \to \mathbb{Z}_+ \, \text{and $y$ has finite support}\}$, and we refer to the zero function as the origin of $Y$. In this section, we study valid inequalities of certain subsets of the space $Y$ and connect gDFFs to a particular family of cut-generating functions.

In the paper of Y\i{}ld\i{}z and Cornu\'ejols \cite{yildiz2016cut}, the authors considered the following generalization of the Gomory--Johnson model:

\begin{equation}
\label{a:1}
x=f+\sum_{r\in\mathbb{R}}r\, y(r)
\end{equation}
$$x\in S,\, y:\mathbb{R} \to \mathbb{Z}_+, \,\text{and $y$ has finite support.}$$
where $S$ can be any nonempty subset of $\mathbb{R}$. %$Y_{=b}$ is the set of feasible solutions to (\ref{a:1}) for $S=\{0\}$, $f=-b$ and $Y_{\le1}$ is the set of feasible solutions to (\ref{a:1}) for $S=(-\infty,0]$, $f=-1$.
A function $\pi\colon \mathbb{R}\to \mathbb{R}$ is called a \emph{valid cut-generating function} if the inequality $\sum_{r\in\mathbb{R}}\pi(r)\, y(r)\ge 1$ holds for all feasible solutions $(x,y)$ to (\ref{a:1}). In order to ensure that such cut-generating functions exist, they only consider the case $f\notin S$. Otherwise, if $f\in S$, then $(x,y)=(f,0)$ is a feasible solution and there is no function $\pi$ which can make the inequality $\sum_{r\in\mathbb{R}}\pi(r)\, y(r)\ge 1$ valid. Note that $y\in Y$ for any feasible solution $(x,y)$ to (\ref{a:1}), and  all valid inequalities in the form of $\sum_{r\in\mathbb{R}}\pi(r)\, y(r)\ge 1$ to (\ref{a:1}) are inequalities which separate the origin of $Y$.

We consider two different but related models in the form of (\ref{a:1}). Let $f=-1$, $S=\{0\}$, and the feasible region $Y_{=1}=\{y : \sum_{r\in\mathbb{R}}r\, y(r)=1, \, \, y\colon\mathbb{R} \to \mathbb{Z}_+ \, \text{and $y$ has finite support}\}$. Let $f=-1$, $S=(-\infty,0]$, and the feasible region $Y_{\le1}=\{y : \sum_{r\in\mathbb{R}}r\, y(r)\le1, \, \, y\colon\mathbb{R} \to \mathbb{Z}_+ \, \text{and $y$ has finite support}\}$. It is immediate to check that the latter model is the relaxation of the former. Therefore $Y_{=1}\subsetneq Y_{\le1}$ and any valid inequality for $Y_{\le1}$ is also valid for $Y_{=1}$.

Jeroslow \cite{jeroslow1979minimal}, Blair \cite{BLAIR1978147} and Bachem et al. \cite{BACHEM198263} studied minimal valid inequalities of the set $Y_{=b}=\{y : \sum_{r\in\mathbb{R}}r\, y(r)=b, \, \, y\colon\mathbb{R} \to \mathbb{Z}_+ \, \text{and $y$ has finite support}\}$. Note that $Y_{=b}$ is the set of feasible solutions to (\ref{a:1}) for $S=\{0\}$, $f=-b$.
The notion ``minimality" they used is in fact the restricted minimality in the Y{\i}ld{\i}z--Cornu\'ejols model.  In this section, we use the terminology introduced by Y\i{}ld\i{}z and Cornu\'ejols.
Jeroslow \cite{jeroslow1979minimal} showed that finite-valued subadditive (restricted minimal) functions are sufficient to generate all necessary valid inequalities of $Y_b$ for bounded mixed integer programs. K{\i}l{\i}n\c{c}-Karzan and Yang \cite{KKYang15} discussed whether finite-valued functions are sufficient to generate all necessary inequalities for the convex hull description of disjunctive sets. Interested readers are referred to \cite{KKYang15} for more details on the sufficiency question. Blair \cite{BLAIR1978147} extended Jeroslow's result to rational mixed integer programs. Bachem et al. \cite{BACHEM198263} characterized restricted minimal cut-generating functions under some continuity assumptions, and they showed that restricted minimal
functions satisfy the symmetry condition.

%Y{\i}ld{\i}z--Cornu\'ejols cut-generating functions provide valid inequalities which separate the origin, but clearly there exist other types of valid inequalities. If we let $f\in S$, we can still consider valid inequalities which do not separate the origin. 

In terms of the relaxation $Y_{\le 1}$, gDFFs can generate the valid inequalities in the form of $\sum_{r\in\mathbb{R}}\phi(r)\, y(r)\le 1$, and such inequalities do not separate the origin. Note that there is no valid inequality separating the origin since $0\in Y_{\le1}$. %The gDFFs can also be viewed as valid functions in the following pure integer linear programming model:

%\begin{equation}
%\label{e:1}
%1\ge \sum_{r\in\mathbb{R}}r\, y(r)
%\end{equation}
%$$y:\mathbb{R} \to \mathbb{Z}_+, \,\text{and $y$ has finite support.}$$

%We call a function $\phi\colon \mathbb{R} \to\mathbb{R}$ a (valid) general Dual-Feasible Function if the inequality $\sum_{r\in\mathbb{R}}\phi(r)\, y(r)\le 1$ holds for all feasible solutions $y$ to (\ref{e:1}).

Cut-generating functions provide valid inequalities which separate the origin for $Y_{=1}$, but such inequalities are not valid for $Y_{\le1}$.  In terms of inequalities do not separate the origin, any inequality in the form of $\sum_{r\in\mathbb{R}}\phi(r)\, y(r)\le 1$ generated by some gDFF $\phi$ is valid for $Y_{\le1}$ and hence valid for $Y_{=1}$, since the model of $Y_{\le1}$ is the relaxation of that of $Y_{=1}$. There also exist valid inequalities which do not separate the origin for $Y_{=1}$ but are not valid for $Y_{\le1}$. For instance, $\sum_{r\in\mathbb{R}}-r\, y(r)\le 1$ is valid for $Y_{=1}$ but not valid for $Y_{\le1}$. For any $y\in Y_{=1}$, $\sum_{r\in\mathbb{R}}-r\, y(r)=-1\le 1$. Consider a feasible solution $y\in Y_{\le1}$ where $y(-1)=2$, $y(r)=0$ if $r\neq -1$, then $\sum_{r\in\mathbb{R}}-r\, y(r)=2>1$.

Y\i{}ld\i{}z and Cornu\'ejols \cite{yildiz2016cut} introduced the notions of minimal, restricted minimal and strongly minimal cut-generating functions. We call the cut-generating functions to the model (\ref{a:1}) when $f=-1$, $S=\{0\}$  cut-generating functions for $Y_{=1}$, and we restate the definitions of minimality of such cut-generating functions. A valid cut-generating function $\pi$ is called \emph{minimal} if it does not dominate another valid cut-generating function $\pi'$. A cut-generating
function $\pi'$ implies a cut-generating function $\pi$ via scaling if there exists $\beta\ge 1$ such
that $\pi\ge \beta \pi'$.  A valid cut-generating function $\pi$ is \emph{restricted minimal} if there is no another cut-generating function $\pi'$ implying $\pi$ via scaling. A cut-generating
function $\pi'$ implies a cut-generating function $\pi$ if there exist $\alpha, \beta$, and $\beta\ge 0, \alpha+\beta\ge 1$ such that $\pi(x)\ge \beta \pi'(x)+\alpha x$. A valid cut-generating function $\pi$ is \emph{strongly minimal} if there is no another cut-generating function $\pi'$ implying $\pi$. Y\i{}ld\i{}z and Cornu\'ejols also characterized minimal and restricted minimal functions without additional assumptions.
As for the strong minimality and extremality, they mainly focused on the case where $f\in \overline{\conv(S)}$ and $ \overline{\conv(S)}$ is full-dimensional. We discuss the strong minimality and extremality when $f=-1$, $S=\{0\}$ in \autoref{distinction}.

In the rest of this section, we show that gDFFs are closely related to cut-generating functions for $Y_{=1}$. The main idea is that valid inequalities generated by cut-generating functions for $Y_{=1}$ can be lifted to valid inequalities generated by gDFFs for the relaxation $Y_{\le1}$.

We include the characterizations \cite[Theorem 2, Proposition 5]{yildiz2016cut} of minimal and restricted minimal cut-generating functions for $Y_{=1}$ below. Bachem et al. had the same characterization \cite[Theorem]{BACHEM198263} as Theorem \ref{YC:2} under continuity assumptions at the origin. 

\begin{theorem}
\label{YC:1}
A function $\pi\colon \mathbb{R}\to \mathbb{R}$ is a minimal cut-generating function for $Y_{=1}$ if and only if $\pi(0)=0$, $\pi$ is subadditive, and $\pi(r)=\sup_{k}\{\frac{1}{k}(1-\pi(1-kr)): k\in \mathbb{Z}_+\}$.
\end{theorem}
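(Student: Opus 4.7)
The plan is to mirror the characterization proof for maximal gDFFs given earlier in \autoref{s:characterization}, with every inequality sense reversed: subadditivity replaces superadditivity, $\sup$ replaces $\inf$, the validity condition $\sum_r\pi(r)\,y(r) \ge 1$ replaces $\sum_r\phi(r)\,y(r) \le 1$, and ``dominates'' flips direction since ``minimal'' means not dominating another valid function. Accordingly, the comparison functions $\pi_1$ built below will sit pointwise below $\pi$ rather than above. I would split the proof into the two standard directions.

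For the ``only if'' direction, assume $\pi$ is a minimal cut-generating function for $Y_{=1}$. To establish $\pi(0) = 0$: since adding or removing mass at $0$ from any $y \in Y_{=1}$ keeps $y$ in $Y_{=1}$, validity forces $\pi(0) \ge 0$, and the function $\pi'$ that agrees with $\pi$ off $0$ and equals $0$ at $0$ is still valid, so minimality forces $\pi(0)=0$. For subadditivity, suppose $\pi(r_1+r_2) > \pi(r_1)+\pi(r_2)$ at some pair; set $\pi_1$ equal to $\pi$ except $\pi_1(r_1+r_2)=\pi(r_1)+\pi(r_2)$. Given any $y \in Y_{=1}$ with $y(r_1+r_2)=k$, the reassignment $y^*$ obtained by setting $y^*(r_1+r_2)=0$ and $y^*(r_i)=y(r_i)+k$ still lies in $Y_{=1}$, and applying validity of $\pi$ at $y^*$ yields $\sum_r \pi_1(r)\,y(r) \ge 1$, contradicting minimality. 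For the generalized symmetry, the assignment $y(r)=k$, $y(1-kr)=1$ lies in $Y_{=1}$, giving $k\pi(r)+\pi(1-kr)\ge 1$ for every $k\in\mathbb{Z}_+$, hence $\pi(r)\ge \sup_k \tfrac{1}{k}(1-\pi(1-kr))$; if this is strict at some $r_0$, lowering $\pi(r_0)$ to the supremum produces a smaller valid function and contradicts minimality.

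For the ``if'' direction, suppose $\pi$ satisfies (i)--(iii). Validity is immediate: for $y\in Y_{=1}$, iterated subadditivity gives $\sum_r \pi(r)\,y(r) \ge \pi\!\left(\sum_r r\,y(r)\right)=\pi(1)$, and specializing $k=1$ in the symmetry condition yields $\pi(1)\ge 1-\pi(0)=1$. For minimality, suppose $\pi'\le \pi$ is valid with $\pi'(r_0)<\pi(r_0)$ for some $r_0$. The symmetry condition provides $k\in\mathbb{Z}_+$ with $\pi'(r_0) < \tfrac{1}{k}(1-\pi(1-kr_0)) \le \tfrac{1}{k}(1-\pi'(1-kr_0))$, i.e., $k\pi'(r_0)+\pi'(1-kr_0)<1$, which contradicts validity of $\pi'$ applied to the $y$ with $y(r_0)=k$ and $y(1-kr_0)=1$.

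The main subtlety lies in the forward direction at the supremum step: one must verify that lowering $\pi$ at a single point $r_0$ to $\sup_k \tfrac{1}{k}(1-\pi(1-kr_0))$ preserves validity. Writing $y(r_0)=k$, the remaining mass of any $y\in Y_{=1}$ satisfies $\sum_{r\ne r_0} r\,y(r)=1-kr_0$, so iterated subadditivity yields $\sum_{r\ne r_0}\pi(r)\,y(r)\ge \pi(1-kr_0)$; combined with $k\pi_1(r_0)\ge 1-\pi(1-kr_0)$ this gives $\sum_r \pi_1(r)\,y(r)\ge 1$. Because $Y_{=1}$ is cut out by an equality rather than an inequality, this step requires no monotonicity property of $\pi$, making it slightly cleaner than its gDFF counterpart. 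Everything else is a routine dualization of the maximal gDFF proof.
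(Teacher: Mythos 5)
Your proof is correct, but there is nothing in the paper to compare it against line by line: the paper does not prove Theorem~\ref{YC:1} at all; it imports it from Y{\i}ld{\i}z--Cornu\'ejols \cite[Theorem 2]{yildiz2016cut}, specialized to $S=\{0\}$, $f=-1$. What you have done is reconstruct a proof by dualizing the paper's own argument for the characterization of maximal gDFFs at the start of \autoref{s:characterization}, and the dualization goes through: the single-point perturbations (lowering $\pi$ at $r_0$ to the supremum, resp.\ to $\pi(r_1)+\pi(r_2)$) correctly mirror the paper's single-point lifting of $\phi$; the test solutions $y(r_0)=k$, $y(1-kr_0)=1$ are feasible for $Y_{=1}$; and the converse direction ($\pi(1)\ge 1$ via $k=1$, then minimality via the symmetry condition and $\pi'\le\pi$) is the exact mirror of the paper's maximality argument. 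Two features of your write-up are worth keeping. First, you establish $\pi(0)=0$ and subadditivity from scratch by mass reassignment at $0$ and at $r_1+r_2$, whereas the paper's gDFF proof imports the corresponding conditions from the monograph's Theorem 3.1, so your proof is more self-contained than its model. Second, your closing observation is precisely the right structural point: because $Y_{=1}$ is cut out by an equality, the remaining mass satisfies $\sum_{r\ne r_0} r\,y(r)=1-kr_0$ exactly, so iterated subadditivity alone suffices, whereas on the gDFF side the paper must additionally invoke the nondecreasing property because $Y_{\le 1}$ is inequality-constrained. The edge cases you leave implicit (a solution supported only at $r_0$, where the empty sum together with $\pi(0)=0$ settles the step; $r_1=r_2$ in the reassignment, where the mass must be doubled at the single point; $1-kr_0=r_0$ in the symmetry test) are all routine and consistent with your argument.
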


\begin{theorem}
\label{YC:2}
A function $\pi\colon \mathbb{R}\to \mathbb{R}$ is a restricted minimal cut-generating function for $Y_{=1}$ if and only if $\pi$ is minimal and $\pi(1)=1$.
\end{theorem}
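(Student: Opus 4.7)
The plan is to use the characterization in Theorem~\ref{YC:1}: a minimal cut-generating function $\pi$ for $Y_{=1}$ is subadditive with $\pi(0)=0$. I will also repeatedly use that the assignment $y(1)=1$, $y(r)=0$ otherwise lies in $Y_{=1}$, so validity of any cut-generating function $\pi'$ forces $\pi'(1)\ge 1$.

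For the ``if'' direction, suppose $\pi$ is minimal and $\pi(1)=1$. If a valid $\pi'\neq\pi$ satisfies $\pi\ge\beta\pi'$ for some $\beta\ge 1$, then evaluating at $r=1$ yields $1=\pi(1)\ge\beta\pi'(1)\ge\beta\ge 1$, forcing $\beta=1$. Then $\pi\ge\pi'$ with $\pi'$ valid, contradicting minimality of $\pi$. This direction is essentially bookkeeping.

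For the ``only if'' direction, taking $\beta=1$ shows restricted minimality implies minimality, so the real content is to establish $\pi(1)=1$. Since $\pi(1)\ge 1$, I argue by contradiction: assume $\pi(1)>1$ and exhibit a valid $\pi'\neq\pi$ that implies $\pi$ via scaling. The natural candidate is $\pi':=\pi/\pi(1)$ with $\beta:=\pi(1)>1$, for which $\pi=\beta\pi'$ holds by construction and $\pi'\neq\pi$ (otherwise $\pi\equiv 0$, contrary to $\pi(1)\ge 1$). The key step is to verify that $\pi'$ is itself a valid cut-generating function: iterating the subadditivity of $\pi$ over the finite support of any $y\in Y_{=1}$ gives
\begin{equation*}
\pi(1) \;=\; \pi\!\left(\sum_{r\in\mathbb{R}} r\,y(r)\right) \;\le\; \sum_{r\in\mathbb{R}} y(r)\,\pi(r),
\end{equation*}
so that $\sum_r \pi'(r)\,y(r)=\tfrac{1}{\pi(1)}\sum_r \pi(r)\,y(r)\ge 1$, contradicting restricted minimality.

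The only nontrivial ingredient is the upgrade of the validity bound from $\sum_r\pi(r)y(r)\ge 1$ to $\sum_r\pi(r)y(r)\ge \pi(1)$; this is exactly what makes the scaled-down $\pi/\pi(1)$ valid. The step crucially uses that $Y_{=1}$ is defined by the equality $\sum_r r\,y(r)=1$, so subadditivity places $\pi(1)$ itself on the left-hand side — it would fail for a relaxation such as $Y_{\le 1}$. The overall argument parallels \cite[Proposition~5]{yildiz2016cut}.
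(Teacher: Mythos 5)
Your proof is correct. There is, however, no internal proof to compare it against: the paper states Theorem~\ref{YC:2} without proof, importing it from \cite[Theorem 2, Proposition 5]{yildiz2016cut}, and your argument is precisely the natural specialization of the Y{\i}ld{\i}z--Cornu\'ejols argument to $S=\{0\}$, $f=-1$. The logical order is sound: you first get minimality from restricted minimality (taking $\beta=1$), which licenses the use of Theorem~\ref{YC:1} to obtain $\pi(0)=0$ and subadditivity, and then iterated subadditivity over the finite support of $y\in Y_{=1}$ gives $\sum_{r}\pi(r)\,y(r)\ge\pi\bigl(\sum_{r}r\,y(r)\bigr)=\pi(1)$, so that $\pi/\pi(1)$ is valid and implies $\pi$ via scaling with $\beta=\pi(1)>1$, the desired contradiction; the ``if'' direction is indeed just evaluation at $r=1$ combined with $\pi'(1)\ge 1$ for any valid $\pi'$. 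Your closing remark is also apt: the upgrade from $\ge 1$ to $\ge \pi(1)$ genuinely uses the equality constraint defining $Y_{=1}$, which is exactly why the analogous scaling argument fails for the relaxation $Y_{\le 1}$ and why gDFFs behave differently (restricted maximality, not minimality, is the right dual notion there).
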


The following theorem describes the conversion between gDFFs and cut-generating functions for $Y_{=1}$. We omit the proof which is a straightforward computation, utilizing the characterization of (restricted) maximal gDFFs and (restricted) minimal cut-generating functions. (See Appendix \ref{s:proof-conversion} for the proof.)
%In fact, (restricted) minimal Y{\i}ld{\i}z--Cornu\'ejols cut-generating functions can be converted to (restricted) maximal gDFFs and the other way around.

\begin{theorem}
\label{valid-conversion}
Given a valid/maximal/restricted maximal gDFF $\phi$, then for every $0< \lambda < 1$, the following function is a valid/minimal/restricted minimal cut-generating function for $Y_{=1}$:
$$\pi_\lambda(x)=\frac{x-(1-\lambda)\,\phi(x)}{\lambda}$$
Given a valid/minimal/restricted minimal cut-generating function $\pi$ for $Y_{=1}$, which is Lipschitz continuous at $x=0$, then there exists $\delta>0$ such that for all $0<\lambda<\delta$ the following function is a valid/maximal/restricted maximal gDFF:
$$\phi_\lambda(x)=\frac{x-\lambda\,\pi(x)}{1-\lambda}, \quad 0< \lambda < 1$$
\end{theorem}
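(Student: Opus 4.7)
The plan is to verify the required correspondences by direct algebraic manipulation, exploiting the fact that the two transformations $\phi\mapsto \pi_\lambda$ and $\pi\mapsto \phi_\lambda$ are mutual inverses for a fixed $\lambda$ (substituting one into the other recovers the original function). With this in mind, every property to be established should transport cleanly through the characterizations of (restricted) maximal gDFFs and (restricted) minimal cut-generating functions from the preceding sections.

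For the forward direction, I would first handle validity: given $y$ with $\sum_r r\,y(r)=1$,
\[
\sum_r \pi_\lambda(r)\,y(r)=\frac{\sum_r r\,y(r)-(1-\lambda)\sum_r\phi(r)\,y(r)}{\lambda}\ge \frac{1-(1-\lambda)}{\lambda}=1
\]
since $\sum_r \phi(r)\,y(r)\le 1$ by validity of $\phi$. For minimality, I would verify Theorem~\ref{YC:1}: $\pi_\lambda(0)=0$ follows from $\phi(0)=0$; subadditivity of $\pi_\lambda$ rearranges to superadditivity of $\phi$ since the linear term is additive and the coefficient of $\phi$ is negative; and the generalized-symmetry inequality $k\pi_\lambda(r)+\pi_\lambda(1-kr)\ge 1$ rearranges to $k\phi(r)+\phi(1-kr)\le 1$, so the $\sup$-characterization for $\pi_\lambda$ translates term by term into the $\inf$-characterization for $\phi$. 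For restricted minimality, $\pi_\lambda(1)=1$ is immediate from $\phi(1)=1$.

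For the reverse direction, the key intermediate step is the following globalization lemma: if $\pi$ is subadditive with $\pi(0)=0$ and Lipschitz continuous at $0$ with local constant $L$ on $[0,\epsilon]$, then $\pi(x)\le Lx$ holds throughout $\mathbb{R}_+$. Indeed, for any $x>0$ one picks $n$ large enough that $x/n<\epsilon$ and chains subadditivity with the local bound: $\pi(x)\le n\,\pi(x/n)\le n\cdot L(x/n)=Lx$. Choosing $\delta=1/L$, for any $0<\lambda<\delta$ one obtains
\[
\phi_\lambda(x)=\frac{x-\lambda\,\pi(x)}{1-\lambda}\ge \frac{x(1-\lambda L)}{1-\lambda}\ge 0 \quad\text{for all } x\ge 0,
\]
supplying the nonnegativity condition of the maximal-gDFF characterization. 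The remaining conditions ($\phi_\lambda(0)=0$, superadditivity, generalized symmetry, and in the restricted case the standard symmetry $\phi_\lambda(x)+\phi_\lambda(1-x)=1$) are then read off the same algebraic identities as in the forward direction. For validity, given $y$ with $c:=\sum_r r\,y(r)\le 1$, I would extend $y$ to $y_n\in Y_{=1}$ by augmenting the mass at $r^*=(1-c)/n$ by $n$ units; then $\sum_r \pi(r)\,y_n(r)\ge 1$ together with the bound $n\,\pi((1-c)/n)\le L(1-c)$ gives $\sum_r \pi(r)\,y(r)\ge 1-L(1-c)$, and this combined with $\lambda L\le 1$ yields $\sum_r \phi_\lambda(r)\,y(r)\le 1$.

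The main obstacle is the reverse direction, specifically establishing the nonnegativity of $\phi_\lambda$ and its validity on the full relaxed feasible region $Y_{\le 1}$; both points depend on the Lipschitz hypothesis propagating from $0$ to all of $\mathbb{R}_+$ via subadditivity. The threshold $\delta$ is dictated precisely by this estimate: any $\delta\le 1/L$ works, whereas larger $\lambda$ values risk producing a $\phi_\lambda$ that is negative somewhere on $\mathbb{R}_+$ or that violates the gDFF inequality on solutions with $\sum_r r\,y(r)<1$.
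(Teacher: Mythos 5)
Your proposal is correct and takes essentially the same route as the paper's proof: the forward direction is the identical validity computation together with the term-by-term transfer of the sup/inf generalized-symmetry characterizations (plus $\phi(1)=1\Leftrightarrow\pi_\lambda(1)=1$ for the restricted case), and the reverse direction uses Lipschitz continuity at $0$ to force nonnegativity and validity of $\phi_\lambda$ for $\lambda$ below a threshold. Your globalization lemma ($\pi(x)\le Lx$ on $\mathbb{R}_+$ via subadditivity, giving $\delta=1/L$) and the augmentation by $n$ units at $(1-c)/n$ simply make explicit what the paper compresses into ``the Lipschitz continuity of $\pi$ at $0$ guarantees $\phi_\lambda(x)\ge 0$'' and ``the proof for validity is analogous''; note only that both your argument and the paper's tacitly use $\pi(0)=0$ (true by definition in the minimal and restricted minimal cases, but an extra normalization in the merely-valid case, since a valid cut-generating function need only satisfy $\pi(0)\ge 0$).
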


\smallbreak

\begin{remark}
\label{distinction}
We discuss the distinctions between these two family of functions.
\begin{enumerate}[(i)]
  \item[(i)] 
It is not hard to prove that extreme gDFFs are always maximal. However, unlike cut-generating functions for $Y_{=1}$, extreme gDFFs are not always restricted maximal. $\phi(x)=0$ is an extreme gDFF but not restricted maximal. 
       \item[(ii)] 
  By applying the proof of \cite[Proposition 28]{yildiz2016cut}, we can show that no strongly minimal cut-generating function for $Y_{=1}$ exists. However, there exist strongly maximal gDFFs by Theorem \ref{existence}. Moreover, we can use the same conversion formula in Theorem \ref{valid-conversion} to convert a restricted minimal cut-generating function to a strongly maximal gDFF (see Theorem \ref{strongly-conversion} below). In fact, it suffices to choose a proper $\lambda$ such that $\lim_{\epsilon\to0^+}\frac{\phi_\lambda(\epsilon)}{\epsilon}=0$ by the characterization of strongly maximal gDFFs (Theorem \ref{strongly-maximal}).
  \item[(iii)]
     There is no extreme piecewise linear cut-generating function $\pi$ for $Y_{=1}$ which is Lipschitz continuous at $x=0$, except for $\pi(x)=x$. If $\pi$ is such an extreme function, then for any $\lambda$ small enough, we claim that $\phi_\lambda$ is an extreme gDFF. Suppose $\phi_\lambda=\frac{1}{2}\phi^1+\frac{1}{2}\phi^2$ and let $\pi_\lambda^1,\pi_\lambda^2$ be the corresponding cut-generating functions of $\phi^1,\phi^2$ by Theorem \ref{valid-conversion}. Note that $\pi=\frac{1}{2}(\pi_\lambda^1+\pi_\lambda^2)$, which implies $\pi=\pi_\lambda^1=\pi_\lambda^2$ and $\phi_\lambda=\phi_\lambda^1=\phi_\lambda^2$.  Thus $\phi_\lambda$ is extreme. By Lemma \ref{lemma2} and the extremality of $\phi_\lambda$, we know $\phi_\lambda(x)=x$ or there exists $\epsilon>0$, such that $\phi_\lambda(x)=0$ for $x\in [0,\epsilon)$. If $\phi_\lambda(x)=x$, then $\pi(x)=x$. Otherwise, $\lim_{x\to0^+}\frac{\phi_\lambda(x)}{x}=0$ for any small enough $\lambda$. 
$$0=\lim_{x\to0^+}\frac{\phi_\lambda(x)}{x}=\lim_{x\to0^+}\frac{x-\lambda\pi(x)}{(1-\lambda)x}=\frac{1-\lambda\lim_{x\to0^+}\frac{\pi(x)}{x}}{1-\lambda}$$     
  The above equation implies $\lim_{x\to0^+}\frac{\pi(x)}{x}=\frac{1}{\lambda}$ for any small enough $\lambda$, which is not possible.   Therefore, $\pi$ cannot be extreme except for $\pi(x)=x$.

       \end{enumerate}
\end{remark}

\begin{theorem}
\label{strongly-conversion}
Given a non-linear restricted minimal cut-generating function $\pi$ for $Y_{=1}$, which is Lipschitz continuous at $0$, then there exists $\lambda>0$ such that the following function is a strongly maximal gDFF:
$$\phi_\lambda(x)=\frac{x-\lambda\,\pi(x)}{1-\lambda}$$
\end{theorem}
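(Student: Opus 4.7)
The plan is to combine \autoref{valid-conversion} with the characterization of strongly maximal gDFFs in \autoref{strongly-maximal}; the latter reduces the task to exhibiting $\lambda\in(0,1)$ for which $\phi_\lambda$ is simultaneously restricted maximal and satisfies $\lim_{\epsilon\to 0^+}\phi_\lambda(\epsilon)/\epsilon=0$. I would first define $s:=\lim_{\epsilon\to 0^+}\pi(\epsilon)/\epsilon$ and verify that $s\in[1,\infty)$. Subadditivity with $\pi(0)=0$ gives the Fekete-type bound $\pi(x)/x\le \pi(x/n)/(x/n)$ for every positive integer $n$, so $\pi(\cdot)/(\cdot)$ is non-decreasing along each dyadic sequence $x/n\to 0$; this forces $\lim_{\epsilon\to 0^+}\pi(\epsilon)/\epsilon=\sup_{x>0}\pi(x)/x$. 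The Lipschitz hypothesis at $0$ bounds this supremum and yields $s<\infty$, and $\pi(1)=1$ combined with subadditivity gives $\pi(1/n)\ge 1/n$, hence $s\ge 1$.

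Second, I would set $\lambda:=1/s$. A direct substitution shows
\[
\frac{\phi_\lambda(\epsilon)}{\epsilon}=\frac{1-\lambda\,\pi(\epsilon)/\epsilon}{1-\lambda},
\]
so the desired limit equals $(1-\lambda s)/(1-\lambda)=0$. To confirm $\lambda\in(0,1)$, I would prove the strict inequality $s>1$ whenever $\pi$ is non-linear, arguing by contradiction: if $s=1$ then $\pi(x)\le x$ on $(0,\infty)$, and combining this with $\pi(r)+\pi(1-r)\ge 1$ (the $k=1$ case of the generalized symmetry) forces $\pi(x)=x$ on $[0,1]$. Lipschitz continuity at $0$ upgrades $\pi$ to a continuous function, and bootstrapping the generalized symmetry $\pi(r)=\sup_k(1-\pi(1-kr))/k$ together with subadditivity propagates $\pi(x)=x$ to all of $\mathbb{R}$, contradicting non-linearity.

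Finally, I would verify that $\phi_{1/s}$ is itself a restricted maximal gDFF, so that \autoref{strongly-maximal} actually applies. Nonnegativity $\phi_{1/s}(x)\ge 0$ on $\mathbb{R}_+$ is exactly the statement $\pi(x)\le sx$, and superadditivity follows from the identity
\[
\phi_\lambda(x+y)-\phi_\lambda(x)-\phi_\lambda(y)=\frac{\lambda}{1-\lambda}\bigl(\pi(x)+\pi(y)-\pi(x+y)\bigr)\ge 0
\]
via subadditivity of $\pi$. The generalized symmetry of $\phi_{1/s}$ transfers from that of $\pi$ by the same substitution used in the proof of \autoref{valid-conversion}, so $\phi_{1/s}$ is a maximal gDFF. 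Non-linearity of $\pi$ precludes $\phi_{1/s}(x)=x$ (the identity $\phi_\lambda(x)=x$ would force $\pi(x)=x$), hence \autoref{rmk} upgrades maximality to restricted maximality, and \autoref{strongly-maximal} then yields strong maximality of $\phi_{1/s}$. The main obstacle is the strict inequality $s>1$: non-linearity of $\pi$ must be converted into rigidity of $\pi(x)/x$ near the origin, using both Lipschitz continuity at $0$ and the generalized symmetry condition to force $\pi$ to be the identity whenever $s=1$.
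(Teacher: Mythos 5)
Your architecture matches the paper's (commented-out) proof: both set $s$ to be the limiting slope of $\pi$ at $0^+$, get $1\le s<\infty$ from subadditivity, $\pi(1)=1$ and Lipschitz continuity, choose $\lambda=1/s$, and finish via \autoref{strongly-maximal} (the paper through the ``$\liminf=0$ suffices'' remark inside that proof, you through existence of the genuine limit). The difference is that the paper merely \emph{asserts} the crucial strict inequality $s>1$ for non-linear $\pi$, while you attempt to prove it --- and that attempt contains the fatal gap. The step ``bootstrapping the generalized symmetry together with subadditivity propagates $\pi(x)=x$ to all of $\mathbb{R}$'' is false. Consider
\begin{equation*}
\pi_c(x)\;=\;(1-c)\,x+c\,\max\bigl(0,\min(x,1)\bigr),\qquad 0<c<1,
\end{equation*}
the continuous function with slope $1-c$ on $(-\infty,0]$ and on $[1,\infty)$, and slope $1$ on $[0,1]$. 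One checks directly that $\pi_c(0)=0$, that $\pi_c$ is subadditive, and that $\pi_c(r)=\sup_{k}\{\frac{1}{k}(1-\pi_c(1-kr)):k\in\mathbb{Z}_+\}$ for every $r$ (for $r<0$ every term equals $(1-c)r$; for $r>1$ the supremum is attained at $k=1$; for $r\in[0,1]$ every term is at most $r$ and $k=1$ gives $r$). By \autoref{YC:1} and \autoref{YC:2}, $\pi_c$ is a restricted minimal cut-generating function for $Y_{=1}$; it is globally Lipschitz and non-linear, yet $\pi_c(x)=x$ on $[0,1]$, so $s=1$. Hence non-linearity does not force $s>1$, and no repair of your contradiction argument is possible.

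Worse, this example does not just defeat your lemma; it defeats the statement itself. For $\pi_c$ and \emph{any} $\lambda\in(0,1)$ one computes $\phi_\lambda(x)=\frac{x-\lambda\pi_c(x)}{1-\lambda}=x$ for $x\in[0,1]$, so $\lim_{\epsilon\to0^+}\phi_\lambda(\epsilon)/\epsilon=1\neq 0$, and by \autoref{strongly-maximal} $\phi_\lambda$ is never strongly maximal (it is restricted maximal, so nothing else can go right). So the theorem, and the paper's own proof of it, need a stronger hypothesis than non-linearity --- e.g.\ $\liminf_{\epsilon\to0^+}\pi(\epsilon)/\epsilon>1$, or non-linearity of $\pi$ on every neighborhood of the origin; your work has in effect exposed this. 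A secondary, fixable issue: your Fekete-type argument only yields $\limsup_{\epsilon\to0^+}\pi(\epsilon)/\epsilon=\sup_{x>0}\pi(x)/x$; to conclude that the genuine limit exists you also need a division-with-remainder estimate $\pi(x)\le m\,\pi(t)+\pi(r)$ with $0\le r<t$, using the Lipschitz bound to control $\pi(r)$, which gives $\liminf\ge\sup$ as well.
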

\begin{comment}
\begin{proof}
From the subadditivity, $\pi(1)=1$ and Lipschitz continuity of $\pi$, $$1\le\liminf_{\epsilon\to0^+}\frac{\pi(\epsilon)}{\epsilon}<+\infty$$ Since $\pi$ is non-linear, then $s=\liminf_{\epsilon\to0^+}\frac{\pi(\epsilon)}{\epsilon}>1$. Let $\lambda=\frac{1}{s}$, then it is immediate to check $\liminf_{\epsilon\to0^+}\frac{\phi_\lambda(\epsilon)}{\epsilon}=0$.  From the second part in the proof of Theorem \ref{strongly-maximal}, we know $\liminf_{\epsilon\to0^+}\frac{\phi_\lambda(\epsilon)}{\epsilon}=0$ suffices in order to prove $\phi_\lambda$ is strongly maximal.
\end{proof}
\end{comment}

\smallbreak

\section{2-slope theorem}
\label{s:2slope}
In this section, we prove a 2-slope theorem for extreme gDFFs, in the spirit of the 2-slope theorem of Gomory and Johnson \cite{infinite,infinite2}. First we introduce two lemmas showing that extreme gDFFs have certain structures.  %By studying the superadditivity of maximal gDFFs, it is not hard to prove the following Lemma \ref{lemma1}.
\begin{lemma}
\label{lemma1}
Piecewise linear maximal gDFFs are continuous at $0$ from the right.
\end{lemma}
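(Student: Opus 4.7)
The plan is to exploit piecewise linearity together with the two key pointwise constraints that maximality imposes near the origin: superadditivity (Theorem \ref{thm:maximality-general}(i)) and nonnegativity in a right neighborhood of $0$ (Theorem \ref{thm:maximality-general}(iv)). Piecewise linearity guarantees that on some interval $(0,\epsilon)$ the function has the explicit form $\phi(x)=ax+b$, and the goal is to force $b=0$, since combined with $\phi(0)=0$ this yields right continuity at the origin.

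First I would fix $\epsilon>0$ small enough that $\phi(x)=ax+b$ holds for $x\in(0,\epsilon)$; such an $\epsilon$ exists by the piecewise linear hypothesis, and I do not need to worry whether the piece is closed or open at $0$, because I am only using values on $(0,\epsilon)$. Next I would apply superadditivity on the diagonal: for every $x\in(0,\epsilon/2)$,
\begin{equation*}
\phi(2x)\ge 2\phi(x),\qquad\text{i.e.,}\qquad 2ax+b\ge 2(ax+b),
\end{equation*}
which simplifies to $b\le 0$. Then I would invoke property (iv) of Theorem \ref{thm:maximality-general}: there exists $\epsilon'>0$ with $\phi(x)\ge 0$ on $(0,\epsilon')$, and letting $x\to 0^+$ along the linear piece gives $b\ge 0$. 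Combining the two bounds yields $b=0$, so $\lim_{x\to 0^+}\phi(x)=0=\phi(0)$.

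There is essentially no obstacle here; the only subtle point is to make sure one uses the \emph{necessary} part of Theorem \ref{thm:maximality-general}, namely condition (iv), rather than the symmetry condition (which holds only sufficiently). Since the argument is purely local at $0$ and only uses superadditivity plus right-nonnegativity, the same proof covers both the continuous and discontinuous piecewise linear settings referenced in \cite{hong-koeppe-zhou:software-paper}.
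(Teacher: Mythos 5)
Your proof is correct and matches the paper's approach: the paper's own proof is the one-liner ``follows directly from superadditivity,'' and your argument is exactly the fleshed-out version of that, using the diagonal inequality $\phi(2x)\ge 2\phi(x)$ on the linear piece to get $b\le 0$. You also correctly supply the complementary ingredient (condition (iv) of Theorem~\ref{thm:maximality-general}, nonnegativity to the right of $0$) that forces $b\ge 0$, a point the paper's terse proof leaves implicit even though superadditivity alone with $\phi(0)=0$ would not rule out a downward jump.
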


\begin{proof}
The claim follows directly from superadditivity.
\end{proof}

%\begin{proof}
%Consider $\phi$ to be a piecewise linear maximal gDFF, and $\phi(x)=ax+b$ on the open interval $(0, x_1)$. Note that the maximality of $\phi$ implies that $\phi(0)=0$. Choose $x=y=\frac{x_1}{3}$. Then based on superadditivity, we have 
%$$\phi(x)+\phi(y)\le\phi(x+y)\Rightarrow ax+b+ay+b\le a(x+y)+b\Rightarrow b\le0$$
%$b$ is also the right limit at $0$, so $b$ is nonnegative. Therefore, $b=0$ which implies $\phi$ is continuous at $0$ from the right. 

%\end{proof}

\begin{lemma}
\label{lemma2}
Let $\phi$ be a piecewise linear extreme gDFF. 
 \begin{enumerate}[(i)]
  \item[(i)] If $\phi$ is strictly increasing, then $\phi(x)=x$.
       \item[(ii)] If $\phi$ is not strictly increasing, then there exists $\epsilon>0$, such that $\phi(x)=0$ for $x\in [0,\epsilon)$.
      \end{enumerate}
\end{lemma}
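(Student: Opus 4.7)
The plan is to analyze the right-slope $s_0 := \lim_{x\to 0^+}\phi(x)/x$, which is well-defined and non-negative because $\phi$ is piecewise linear with $\phi(0)=0$ and, since extreme gDFFs are maximal, $\phi\geq 0$ on $\mathbb{R}_+$. I first dispose of the purely linear alternative $\phi(x) = ax$ with $0 \leq a < 1$ coming from the maximality dichotomy: for $0 < a < 1$, writing $\phi = a\cdot\mathrm{id} + (1-a)\cdot 0$ expresses $\phi$ as a nontrivial convex combination of the two distinct gDFFs $\mathrm{id}$ and $0$, contradicting extremality; so in the linear case $\phi\equiv 0$, which gives (ii) trivially and is ruled out in (i) by strict monotonicity. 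From now on I assume $\phi$ is symmetric, with $\phi(1)=1$ by Proposition~\ref{given-point}. Superadditivity applied to $n$ copies of $x/n$, with $n$ large enough that $x/n$ lies in the initial linear piece where $\phi$ has slope $s_0$, yields $\phi(x) \geq s_0 x$ on $\mathbb{R}_+$, and evaluating at $x = 1$ forces $s_0 \leq 1$.

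The heart of the proof is to construct an extremality-breaking decomposition whenever $s_0 > 0$. Pick any $\alpha \in (0, s_0)$ (so $\alpha < s_0 \leq 1$) and set $\phi_\alpha(x) := (\phi(x) - \alpha x)/(1-\alpha)$, so the algebraic identity $\phi = \alpha \cdot \mathrm{id} + (1-\alpha)\cdot\phi_\alpha$ holds. The key step is to verify that $\phi_\alpha$ is itself a valid gDFF; only the validity inequality requires thought. Given any finite family $(x_i)$ with $B := \sum_i x_i \leq 1$, I augment it by an extra entry $x_\ast := 1 - B \geq 0$, so the augmented family sums to exactly $1$; validity of $\phi$ then gives $A + \phi(1-B) \leq 1$, where $A := \sum_i \phi(x_i)$. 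Since $\phi(1-B) \geq \alpha(1-B)$ by the bound $\phi \geq s_0\cdot\mathrm{id} \geq \alpha\cdot\mathrm{id}$ on $\mathbb{R}_+$, this rearranges to $A - \alpha B \leq 1 - \alpha$, i.e., $\sum_i \phi_\alpha(x_i) = (A-\alpha B)/(1-\alpha) \leq 1$, as needed.

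Finally, observe that $\phi_\alpha = \mathrm{id}$ is equivalent (on solving for $\phi$) to $\phi = \mathrm{id}$, so unless $\phi$ is the identity, the decomposition $\phi = \alpha\cdot\mathrm{id} + (1-\alpha)\cdot\phi_\alpha$ displays $\phi$ as a nontrivial convex combination of two distinct gDFFs, contradicting extremality. Hence $s_0 > 0$ forces $\phi = \mathrm{id}$, which proves (i) since strict monotonicity excludes $s_0 = 0$; conversely, for (ii) the failure of strict monotonicity excludes $\phi = \mathrm{id}$, so $s_0 = 0$, and piecewise linearity then yields $\phi \equiv 0$ on the initial piece $[0, \epsilon)$. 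The main obstacle of the argument is verifying validity of $\phi_\alpha$; the augmentation-by-slack trick is what makes this go through uniformly for all $s_0 \in (0, 1]$, sparing us a separate and more delicate boundary argument at $s_0 = 1$.
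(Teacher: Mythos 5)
Your proof is correct, and its skeleton matches the paper's: both hinge on writing $\phi=\alpha\cdot\mathrm{id}+(1-\alpha)\,\phi_\alpha$ with $\phi_\alpha=(\phi-\alpha\,\mathrm{id})/(1-\alpha)$ and using extremality to force $\phi=\mathrm{id}$ or $\alpha=0$. Where you genuinely diverge is in certifying that $\phi_\alpha$ is a gDFF. The paper takes $\alpha=s$, the initial slope itself, and proves the stronger statement that $\phi_1=(\phi-s\,\mathrm{id})/(1-s)$ is a \emph{maximal} gDFF, by checking superadditivity and the generalized symmetry condition $\phi_1(r)=\inf_{k}\{\frac{1}{k}(1-\phi_1(1-kr)):k\in\mathbb{Z}_+\}$ against the characterization of maximality; this forces the restriction $0<s<1$, so the boundary cases $s=0$ and $s=1$ are split off and dispatched by separate short arguments. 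You instead take $\alpha$ strictly inside $(0,s_0)$ and verify only \emph{validity} of $\phi_\alpha$, directly from the definition, via the augmentation-by-slack trick (adjoining $x_\ast=1-B$) combined with the linear lower bound $\phi\ge s_0\,\mathrm{id}\ge\alpha\,\mathrm{id}$ on $\mathbb{R}_+$, itself obtained from superadditivity on $n$ copies of $x/n$. This is more elementary---validity suffices to contradict extremality, since extremality is defined against arbitrary gDFFs rather than maximal ones---and the choice $\alpha<s_0$ makes the argument uniform in $s_0\in(0,1]$, absorbing the case $s_0=1$ that the paper handles separately; what the paper's route buys in exchange is the stronger structural fact that the perturbed function is itself maximal and vanishes on the initial piece. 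One small gloss: your assertion that $s_0=\lim_{x\to0^+}\phi(x)/x$ is well-defined and finite tacitly uses right-continuity of $\phi$ at $0$ (no constant term in the initial affine piece); this is exactly Lemma \ref{lemma1} of the paper, and in any case your own $n$-copies superadditivity bound rules out an upward jump at $0$, so the omission is cosmetic.
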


\begin{proof}
We provide a proof sketch. (See Appendix \ref{s:proof-lemma2} for the complete proof.) 

From Lemma \ref{lemma1} we assume
%know $\phi$ is continuous at $0$ from the right. Suppose 
$\phi(x)=sx$, $x\in[0,x_1)$ and $s>0$. We claim $0\le s<1$ due to maximality of $\phi$ and $\phi(1)=1$. Define a function: $\phi_1(x)=\frac{\phi(x)-sx}{1-s}$, and it is straightforward to show that $\phi_1$ is maximal, and $\phi(x)=sx+(1-s)\phi_1(x)$. From the extremality of $\phi$, $s=0$ or $\phi(x)=x$.

\begin{comment}
$\phi$ is not strictly increasing if $s=0$. In order to satisfy the superadditivity, $s$ should be the smallest slope value. $s\le1$ since $\phi(1)\le1$ and $\phi$ is  nondecreasing, which means even if $\phi$ is discontinuous, $\phi$ can only jump up at discontinuities. Similarly if $s=1$, then $\phi(x)=x$.

Next, we can assume $0<s<1$. Define a function: $$\phi_1(x)=\frac{\phi(x)-sx}{1-s}$$
Clearly $\phi_1(x)=0$ for $x\in [0,x_1)$. $\phi_1$ is superadditive because it is obtained by subtracting a linear function from a superadditive function. 

\begin{align*}
  \phi_1(r) & \, \,=\frac{\phi(r)-sr}{1-s}\\
  & \, \, = \frac{1}{1-s}[\inf_{k}\{\frac{1}{k}(1-\phi(1-kr)): k\in \mathbb{Z}_+\} -sr
]\\
 & \, \,=  \frac{1}{1-s}[\inf_{k}\{\frac{1}{k}(1-[(1-s)\phi_1(1-kr)+s(1-kr)]): k\in \mathbb{Z}_+\} -sr]\\
 & \, \,= \frac{1}{1-s}[\inf_{k}\{\frac{1}{k}[(1-s)+skr-(1-s)\phi_1(1-kr)]: k\in \mathbb{Z}_+\} -sr]\\
 & \, \,= \frac{1}{1-s}\inf_{k}\{\frac{1}{k}[(1-s)-(1-s)\phi_1(1-kr)]: k\in \mathbb{Z}_+\} \\
 & \, \, = \inf_{k}\{\frac{1}{k}(1-\phi_1(1-kr)): k\in \mathbb{Z}_+\} 
\end{align*}

The above equation shows that $\phi_1$ satisfies the generalized symmetry condition. Therefore, $\phi_1$ is also a maximal gDFF. $\phi(x)=sx+(1-s)\phi_1(x)$ implies $\phi$ is not extreme, since it can be expressed as a convex combination of two different maximal gDFFs: $x$ and $\phi_1$.
\end{comment}
\end{proof}

From Lemma \ref{lemma2}, we know $0$ must be one slope value of a piecewise linear extreme gDFF $\phi$, except for $\phi(x)=x$. Now we prove the 2-slope theorem for extreme gDFFs. The fundamental tool in the proof is the Interval Lemma \cite[Lemma 2.2]{bhk-IPCOext}, which was used in the proof of Gomory--Johnson's 2-slope theorem. We include one version of the Interval Lemma here.

\begin{lemma}[Interval Lemma]
Let $a_1 < a_2$ and $b_1 < b_2$. Consider the intervals $A = [a_1, a_2]$,
$B = [b_1, b_2]$, and $A+B = [a_1+b_1, a_2+b_2]$. Let $f \colon A \to \mathbb{R}$, $g \colon B \to \mathbb{R}$, and $h \colon A+B \to \mathbb{R}$ be bounded functions on $A$, $B$ and $A+B$, respectively. If $f(a) + g(b) = h(a + b)$ for all $a \in A$ and $b \in B$, then $f$, $g$,
and $h$ are affine functions with identical slopes in the intervals $A$, $B$, and $A + B$, respectively.
\end{lemma}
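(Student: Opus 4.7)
The plan is to reduce the problem to the classical Cauchy functional equation $k(s+t)=k(s)+k(t)$ on an interval, and then exploit boundedness to conclude that $k$ is linear. This is the natural route because the hypothesis $f(a)+g(b)=h(a+b)$ already has the additive shape of Cauchy's equation, with the only obstacle being that the three functions live on different (but overlapping) domains.

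First I would normalize away the constants. By replacing $f(a)$ with $f(a)-f(a_1)$, $g(b)$ with $g(b)-g(b_1)$, and $h(t)$ with $h(t)-f(a_1)-g(b_1)$, I may assume $f(a_1)=g(b_1)=0$, and then the identity at $(a,b)=(a_1,b_1)$ gives $h(a_1+b_1)=0$. Setting $b=b_1$ in the functional equation now yields $f(a)=h(a+b_1)$ for all $a\in A$, and setting $a=a_1$ yields $g(b)=h(a_1+b)$ for all $b\in B$. Substituting these back into the original identity gives the clean relation
\begin{equation*}
h(a+b)=h(a+b_1)+h(a_1+b)\qquad\text{for all }a\in A,\ b\in B.
\end{equation*}
Introducing the translated function $k(t):=h(a_1+b_1+t)$ on $[0,(a_2-a_1)+(b_2-b_1)]$, the above becomes
\begin{equation*}
k(s+t)=k(s)+k(t)\qquad\text{for all }s\in[0,a_2-a_1],\ t\in[0,b_2-b_1],
\end{equation*}
which is Cauchy's equation restricted to a rectangle inside its domain.

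The main obstacle is the classical fact that a bounded solution of Cauchy's functional equation on an interval is linear. I would handle it as follows. From the additivity relation, induction gives $k(ns)=nk(s)$ whenever the arguments remain in the permitted interval, and consequently $k(qs)=qk(s)$ for all positive rationals $q$ for which both sides are defined. Fix any $t_0$ in the interior and set $c:=k(t_0)/t_0$; I want to show $k(t)=ct$ on the whole domain. If not, pick $t^*$ with $k(t^*)\neq ct^*$, write $e:=k(t^*)-ct^*\neq 0$, and use the rational-homogeneity to iterate: appropriate rational multiples of $t^*$ (staying inside the interval) produce values $k(qt^*)-cqt^*=qe$, whose magnitudes can be made arbitrarily large by choosing $q$ large along a sequence that remains in the domain via the additive relation. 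This contradicts the boundedness of $h$ (hence of $k$) inherited from the hypothesis. Therefore $k(t)=ct$ throughout, so $h$ is affine with slope $c$ on $A+B$.

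Finally, I would transfer the conclusion back to $f$ and $g$. From $f(a)=h(a+b_1)$ and the affinity of $h$, the function $f$ is affine on $A$ with the same slope $c$; likewise $g$ is affine on $B$ with slope $c$. Undoing the normalization only changes the constant terms, so $f$, $g$, and $h$ are all affine functions on their respective intervals with a common slope, which is exactly the assertion of the lemma.
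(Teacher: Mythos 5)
Your reduction is fine and is the standard one: normalizing so that $f(a_1)=g(b_1)=0$, identifying $f$ and $g$ with translates of $h$, and arriving at the restricted Cauchy equation $k(s+t)=k(s)+k(t)$ for $s\in[0,a_2-a_1]$, $t\in[0,b_2-b_1]$ is exactly how one should start. (For the record, the paper does not prove this lemma at all; it imports it from the cited reference, so there is no in-paper proof to compare against.)

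However, your central step --- where boundedness is supposed to enter --- has a genuine gap. You claim that from $k(t^*)-ct^*=e\neq 0$ you can produce values $k(qt^*)-cqt^*=qe$ of arbitrarily large magnitude ``by choosing $q$ large along a sequence that remains in the domain.'' This is self-contradictory: if $qt^*$ must lie in the bounded domain $[0,\ell]$ with $\ell=(a_2-a_1)+(b_2-b_1)$, then $q\le \ell/t^*$, so $|qe|\le \ell|e|/t^*$ and no contradiction with boundedness arises. Replacing $t^*$ by $t^*/N$ does not help either, because rational homogeneity gives $k(t^*/N)-c\,t^*/N=e/N$, so the deviation shrinks at exactly the rate that the admissible range of $q$ grows, and the products stay bounded by the same constant $\ell|e|/t^*$. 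The missing idea is the one that makes the lemma nontrivial: consider the deviation $d(t)=k(t)-ct$ and observe that, by rational homogeneity, $d$ vanishes at \emph{all} rational multiples of the reference point $t_0$ lying in the domain. This lets you ``wrap around'': alternately add $t^*$ (which increases $d$ by $e$, using restricted additivity) and subtract a suitable rational multiple of $t_0$ (which leaves $d$ unchanged), so that the running argument stays inside a fixed small subinterval while $d$ accumulates to $ne$ after $n$ rounds; now $n$ is genuinely unbounded and you contradict boundedness. Equivalently, one can first extend $k$ to an additive function $K$ on all of $\mathbb{R}$ via $K(x):=nk(x/n)$ for $n$ large (checking well-definedness and additivity from the restricted relation), and then invoke the classical theorem that an additive function on $\mathbb{R}$ bounded on an interval of positive length is linear --- whose proof is precisely this periodicity/wrap-around trick. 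Either repair also forces you to track the domain restrictions carefully, since your additivity holds only on the rectangle $[0,a_2-a_1]\times[0,b_2-b_1]$, not for all pairs summing into $[0,\ell]$; your induction $k(ns)=nk(s)$ silently assumes each intermediate decomposition stays in that rectangle.
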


\begin{theorem}
\label{t:2-slope}
Let $\phi$ be a continuous piecewise linear strongly maximal gDFF with only 2 slope values,  then $\phi$ is extreme.
\end{theorem}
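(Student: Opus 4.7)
The plan is to assume a convex decomposition $\phi = \tfrac12(\phi_1 + \phi_2)$ by gDFFs $\phi_1,\phi_2$ and show $\phi_1 = \phi_2 = \phi$. As setup, each $\phi_j$ inherits strong properties from $\phi$: both are \emph{maximal} (any strict domination of $\phi_j$ by a gDFF would give a strict domination of $\phi$, since convex combinations of gDFFs are gDFFs), \emph{restricted maximal} (because $\phi_j(x)+\phi_j(1-x)\le 1$ holds for any gDFF while the average equals $1$ by restricted maximality of $\phi$, each summand must hit equality), and \emph{continuous} (nondecreasing summands whose average is continuous cannot have jumps). Theorem~\ref{strongly-maximal} gives $\lim_{\epsilon\to 0^+}\phi(\epsilon)/\epsilon=0$, so the slope of $\phi$ on the piece immediately right of $0$ must be $0$; combined with the 2-slope hypothesis, the two slope values of $\phi$ are $\{0,s\}$ for some $s>0$.

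On any slope-$0$ interval of $\phi$, the sum $\phi_1+\phi_2=2\phi$ is constant while both summands are nondecreasing, so both are constant there; with the boundary values $\phi_j(0)=0$ and $\phi_j(1)=1$, this gives $\phi_j=\phi$ on every slope-$0$ piece of $\phi$ inside $[0,1]$. The core step is to show $\phi_j=\phi$ on each slope-$s$ interval $J=[\alpha,\beta]\subset[0,1]$. The key observation is that whenever $\phi(x)+\phi(y)=\phi(x+y)$ on some set, summing the superadditivity inequalities for $\phi_1$ and $\phi_2$ and using $\phi=\tfrac12(\phi_1+\phi_2)$ forces both $\phi_j$ to be additive on that set. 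For a 2-slope $\phi$, a non-degenerate additivity rectangle $A\times B$ exists exactly when $A$, $B$, $A+B$ all lie in slope-$s$ pieces of $\phi$ whose intercepts satisfy $c_A+c_B=c_{A+B}$. Combining the bound $\phi(x)\le sx$ of Proposition~\ref{limit-slope} (with limiting slope $t=s$) with continuity of $\phi$ at $0$ and the slope-$0$ piece on $[0,\epsilon]$, one verifies that the slope-$s$ piece of $\phi$ immediately left of $0$ has intercept $0$, so $\phi(x)=sx$ on some $[-\delta',0]$. Taking $A=[-\eta,0]$ and $B=[\alpha+\eta,\beta]$ with $\eta\in(0,\delta')$ so that $A+B=J$, the intercepts align ($0+c_J=c_J$), so $\phi$ is additive on $A\times B$ and hence so is each $\phi_j$. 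The Interval Lemma then forces each $\phi_j$ to be affine on $A+B=J$ with a common slope, which must equal $s$ by the already established boundary values $\phi_j(\alpha)=\phi(\alpha)$ and $\phi_j(\beta)=\phi(\beta)$; hence $\phi_j=\phi$ on $J$, and thus on all of $[0,1]$.

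To extend $\phi_j=\phi$ to $\mathbb{R}$, the same Interval-Lemma technique applies to rectangles $A',B'\subset(-\infty,0)$ with $A'+B'\subset(-\infty,0)$ (all slope-$s$ with intercept $0$) to propagate affineness with slope $s$ across $(-\infty,0)$, and the symmetric argument handles $(1,\infty)$. The main obstacle I anticipate is the central Interval-Lemma step: rigorously establishing that the slope-$s$ piece of $\phi$ immediately left of $0$ has intercept exactly $0$, so that the additivity equation $\phi(x)+\phi(y)=\phi(x+y)$ holds on the full rectangle $A\times B=[-\eta,0]\times[\alpha+\eta,\beta]$ with $A+B=J$, allowing a single Interval Lemma application to pin down $\phi_j$ on the entire slope-$s$ interval.
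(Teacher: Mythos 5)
Your skeleton is essentially the paper's: assume $\phi=\tfrac12(\phi_1+\phi_2)$, transfer additivity relations to the $\phi_j$ via superadditivity, build additivity rectangles from the slope-$0$ piece to the right of $0$ and the piece $\phi(x)=sx$ to the left of $0$, and apply the Interval Lemma. Two of your simplifications are genuine improvements over the paper's write-up: establishing continuity of each $\phi_j$ upfront (monotone summands of a continuous sum cannot jump) removes the paper's bookkeeping of jump terms $h_i$, and on slope-$0$ pieces constancy of each $\phi_j$ follows from monotonicity alone, with no Interval Lemma needed.

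However, there is a genuine circularity in how you pin down the slopes on the slope-$s$ pieces. From constancy on slope-$0$ pieces plus $\phi_j(0)=0$, $\phi_j(1)=1$ you claim ``$\phi_j=\phi$ on every slope-$0$ piece of $\phi$ inside $[0,1]$''; this is unjustified for \emph{interior} slope-$0$ pieces, whose constant values are not determined at that stage. For instance, if the slope pattern on $[0,1]$ is $0,s,0,s,0,s,0$, then constancy, monotonicity, the symmetry $\phi_j(x)+\phi_j(1-x)=1$, and the endpoint values constrain the constants on the two interior slope-$0$ pieces only to sum to $1$, not to equal the values of $\phi$. You then invoke these unestablished boundary values $\phi_j(\alpha)=\phi(\alpha)$, $\phi_j(\beta)=\phi(\beta)$ to force the slope of $\phi_j$ on $J=[\alpha,\beta]$ to equal $s$ --- but determining those values is exactly what requires knowing the slopes. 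The fix is the paper's counting argument, and your setup already supplies the fact it needs: since every one of your Interval-Lemma applications shares the interval $A=[-\eta,0]$, the slope $s_j$ of $\phi_j$ is one and the same number on \emph{all} slope-$s$ pieces. Letting $L_s$ be the total length of the slope-$s$ pieces in $[0,1]$, continuity and constancy on slope-$0$ pieces give $1=\phi_j(1)-\phi_j(0)=s_jL_s$, while $1=sL_s$; hence $s_j=s$, and walking from $0$ to $1$ yields $\phi_j=\phi$ on $[0,1]$ (then on $\mathbb{R}$ as you outline). A minor side issue: your appeal to Proposition~\ref{limit-slope} ``with limiting slope $t=s$'' is inaccurate --- $t$ is the limiting average slope and is in general strictly smaller than $s$ (for $\phi_{BJ,1}$ with $C=3/2$ one has $t=3/2$ but $s=3$); what your argument actually needs is only $t\ge 1>0$ together with $\phi(x)\le tx$ for $x<0$, which forces the piece left of $0$ to have slope $s$, hence intercept $0$ by continuity at $0$.
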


\begin{proof}
Since $\phi$ is strongly maximal with 2 slope values, we know one slope value must be $0$. Suppose $\phi=\frac{1}{2}(\phi_1+\phi_2)$, where $\phi_1, \phi_2$ are two maximal gDFFs. From Proposition \ref{given-point}, we know $\phi(1)=1$, which implies $\phi_1(1)=\phi_2(1)=1$. Let $s$ be the other slope value of $\phi$. Due to superadditivity of $\phi$, there exist $\epsilon, \delta>0$ such that $\phi(x)=sx$ for $x\in[-\epsilon,0]$ and $\phi(x)=0$ for $x\in[0,\delta]$. We want to %use the Interval Lemma 
to show $\phi_1, \phi_2$ have slope $0$ where $\phi$ has slope $0$, and  $\phi_1, \phi_2$ have slope $s$ where $\phi$ has slope $s$.

\emph{Case 1:} Suppose $[a,b]$ is a closed interval where $\phi$ has slope value $0$. Choose $\delta'=\min(\delta, \frac{b-a}{2})>0$. Let $I=[0,\delta']$, $J=[a,b-\delta']$, $K=[a,b]$, then $I,J,K$ are three non-empty and proper intervals. Clearly $\phi(x)+\phi(y)=\phi(x+y)$ for $x\in I, y\in J$. Since $\phi_1,\phi_2$ are also superadditive, they satisfy the equality where $\phi$ satisfy the equality. In other words, $\phi_i(x)+\phi_i(y)=\phi_i(x+y)$ for $x\in I$, $y\in J$, $i=1,2$. By Interval Lemma, $\phi_1$ is affine over $[a,b]$ and $[0,\delta']$ with the same slope value $l_1$. Similarly, $\phi_2$ is affine over $[a,b]$ and $[0,\delta']$ with the same slope value $l_2$. It is clear that $l_1=l_2=0$ since $\phi_1,\phi_2$ are  increasing and $0=\frac{1}{2}(l_1+l_2)$.
\smallbreak
\emph{Case 2:} Suppose $[c,d]$ is a closed interval where $\phi$ has slope value $s$. Choose $\epsilon'=\min(\epsilon, \frac{d-c}{2})$. Let $I=[-\epsilon',0]$, $J=[c+\epsilon',d]$, $K=[c,d]$, it is clear that $\phi(x)+\phi(y)=\phi(x+y)$ for $x\in I, y\in J$. Similarly we can prove that $\phi_i$ is affine over $[c,d]$ and $[-\epsilon',0]$ with the same slope value $s_i$ ($i=1,2$).

%Next, we show that $\phi_1,\phi_2$ are both continuous and $s_1=s_2=s$.
Consider the interval $[0=x_0, x_1, \dots, x_n=1]$, where $\phi$ has slope $0$ over $[x_k,x_{k+1}]$ with $k$ even and slope $s$ over $[x_k,x_{k+1}]$ with $k$ odd. Then $\phi_i$ have slope $0$ over $[x_k,x_{k+1}]$ with $k$ even and slope $s_i$ over $[x_k,x_{k+1}]$ with $k$ odd. Let $L_0$ and $L_s$ be the total length of intervals where $\phi$ has slope $0$ and $s$, respectively. Then $s\cdot L_s+0\cdot L_0=1$. $\phi_i$ may have possible jumps at breakpoints $x_k$, but it can only jump up since $\phi_i$ is  increasing. Suppose $h_i\ge 0$ are the total jumps of $\phi_i$ at discontinuous points. From $\phi_i(1)=1$ we can obtain the following equation:
$$s_i\cdot L_s +0 \cdot L_0 +h_i=1\,\,\,\,(i=1,2)$$
Note that $s=\frac{1}{2}(s_1+s_2)$ and $s\cdot L_s+0\cdot L_0=1$. So $s_1=s_2=s$ and $h_1=h_2=0$ which implies $\phi_1, \phi_2$ are continuous and $\phi_1=\phi_2=\phi$. Therefore, $\phi$ is extreme.
\end{proof}

\begin{remark}
Alves et al. \cite{alves-clautiaux-valerio-rietz-2016:dual-feasible-book} claimed the following functions by Burdett and Johnson with one parameter $C\ge1$ are maximal gDFFs, where $\{a\}$ represents the fractional part of $a$. 
\begin{equation*}
\phi_{BJ,1}(x;C)=\frac{\floor{Cx}+\max(0,\frac{\{Cx\}-\{C\}}{1-\{C\}})}{\floor{C}}
\end{equation*}
Actually we can prove that they are extreme. If $C\in \bb{N}$, then $\phi_{BJ,1}(x)=x$. If $C\notin \bb{N}$, $\phi_{BJ,1}$ is a continuous 2-slope maximal gDFF with one slope value $0$, therefore it is extreme by Theorem \ref{t:2-slope}. \autoref{fig1} shows two examples of $\phi_{BJ,1}$ and they are constructed by the Python function \sage{phi\textunderscore1\textunderscore bj\textunderscore gdff}\footnote{In this paper, a  function  name  shown  in  typewriter  font  is  the  name  of  the function  in our SageMath program \cite{cutgeneratingfunctionology:online}. At the time of writing, the function is available on the feature branch \sage{gdff}.  Later it will be merged into the \sage{master} branch.}. 

\begin{figure}[!htb]
\minipage{0.45\textwidth}
  \includegraphics[width=\linewidth]{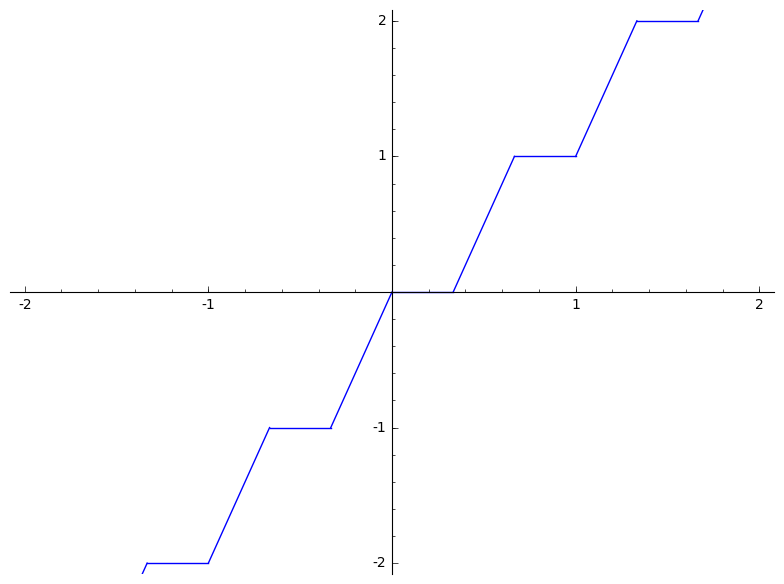}
\endminipage\hfill
\minipage{0.45\textwidth}%
  \includegraphics[width=\linewidth]{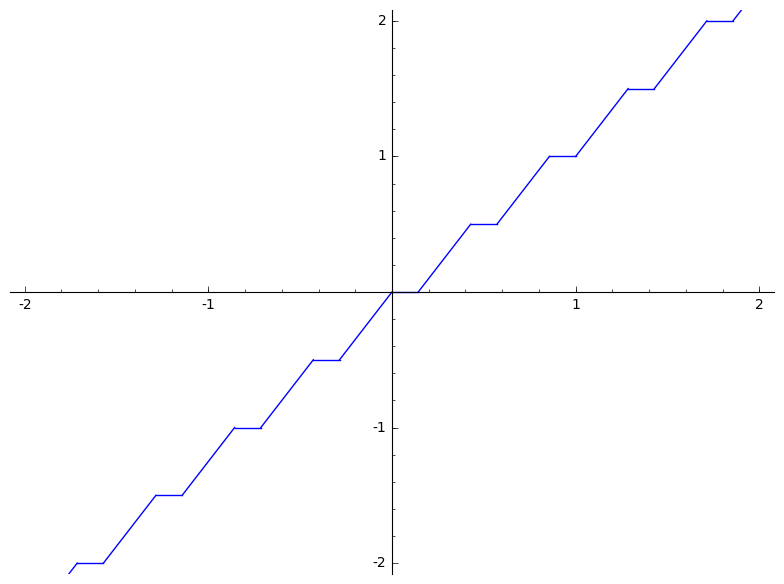}
\endminipage
\caption{GDFFs $\phi_{BJ,1}$ \cite[Example 3.1]{alves-clautiaux-valerio-rietz-2016:dual-feasible-book} for parameter values $C=3/2$ (left) and $C=7/3$ (right). }
\label{fig1}
\end{figure}

\end{remark}

\section{Restricted maximal general DFFs are almost extreme}
\label{s:approximation}
In the previous section, we have shown that any continuous 2-slope strongly maximal gDFF is extreme. In this section, we prove that extreme gDFFs are dense in the set of continuous restricted maximal gDFFs. Equivalently, for any given continuous restricted maximal gDFF $\phi$, there exists an extreme gDFF $\phi_{\mathrm{ext}}$ which approximates $\phi$ as close as desired (with the infinity norm). The idea of the proof is inspired by the approximation theorem of Gomory--Johnson functions \cite{bhm:dense-2-slope}. We first introduce the main theorem in this section. The approximation\footnote{See the constructor \sage{two\textunderscore slope\textunderscore approximation\textunderscore gdff\textunderscore linear}.} is implemented for piecewise linear functions with finitely many pieces.

\begin{theorem}
Let $\phi$ be a continuous restricted maximal gDFF, then for any $\epsilon>0$, there exists an extreme gDFF $\phi_{\mathrm{ext}}$ such that $\|\phi-\phi_{\mathrm{ext}}\|_{\infty}<\epsilon$.
\end{theorem}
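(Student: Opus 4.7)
The proof strategy is to adapt the 2-slope fill-in construction of Basu, Hildebrand, and Molinaro \cite{bhm:dense-2-slope} to the gDFF setting. Theorem \ref{t:2-slope} guarantees that any continuous 2-slope strongly maximal gDFF is extreme, and Lemma \ref{lemma2} forces one of the two slope values to be $0$. The target is therefore to produce a continuous piecewise linear strongly maximal gDFF $\phi_{\mathrm{ext}}$ within $\epsilon$ of $\phi$ that uses only slopes $0$ and some sufficiently large $s$.

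First, I would reduce to the piecewise linear case. Proposition \ref{limit-slope} gives $\phi(x) = tx + O(1)$ globally, so by uniform continuity of $\phi - tx$ on a large compact interval together with the symmetry condition $\phi(x)+\phi(1-x)=1$, one can approximate $\phi$ by a continuous piecewise linear restricted maximal gDFF $\phi_1$ with $\|\phi-\phi_1\|_\infty < \epsilon/2$. Concretely, interpolate $\phi$ at a mesh $0=x_0<x_1<\cdots<x_n=1$ chosen symmetrically around $1/2$, then extend to $\mathbb{R}$ using the symmetry condition and the asymptotic slope $t$ outside a bounded region.

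Next comes the 2-slope fill-in. Pick $s$ strictly larger than the maximum slope of $\phi_1$ and refine the mesh so that each interval has length less than $\epsilon/(2s)$. On each subinterval $[x_k,x_{k+1}]$ where $\phi_1$ has slope $s_k\in[0,s)$, replace the linear piece by a two-slope zigzag that alternates slopes $0$ and $s$, matching $\phi_1$ at both endpoints; the fraction of the interval spent on slope $s$ equals $s_k/s$, so the ascent is preserved and the supremum deviation on the interval is at most $s(x_{k+1}-x_k)<\epsilon/2$. Arrange the zigzag so that a slope-$0$ plateau begins at the origin, which yields $\phi_{\mathrm{ext}}(x)=0$ on a small interval $[0,\delta]$ and hence the strongly maximal condition $\lim_{\epsilon\to 0^+}\phi_{\mathrm{ext}}(\epsilon)/\epsilon=0$, and arrange the pattern symmetrically about $1/2$ so that $\phi_{\mathrm{ext}}(x)+\phi_{\mathrm{ext}}(1-x)=1$ is preserved.

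The main obstacle will be verifying superadditivity of the zigzag function $\phi_{\mathrm{ext}}$; nonnegativity on $\mathbb{R}_+$, symmetry, and the origin condition come for free from the construction above. Adapting the machinery of \cite{bhm:dense-2-slope}, I would compare $\phi_{\mathrm{ext}}(x)+\phi_{\mathrm{ext}}(y)$ against $\phi_{\mathrm{ext}}(x+y)$ by tracking their deviation from $\phi_1(x)+\phi_1(y)$ and $\phi_1(x+y)$, which amounts to controlled zigzag oscillations of amplitude at most $s(x_{k+1}-x_k)/2$. The slope-$0$ plateaus must be arranged to absorb these oscillations: by aligning the zigzag teeth with the underlying mesh and ensuring each plateau is long enough relative to the worst-case oscillation, the strict superadditivity slack of $\phi_1$ dominates the zigzag error, yielding superadditivity of $\phi_{\mathrm{ext}}$. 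Once this delicate alignment is established, $\phi_{\mathrm{ext}}$ is a continuous 2-slope strongly maximal gDFF, hence extreme by Theorem \ref{t:2-slope}, and the triangle inequality $\|\phi-\phi_{\mathrm{ext}}\|_\infty \le \|\phi-\phi_1\|_\infty + \|\phi_1-\phi_{\mathrm{ext}}\|_\infty < \epsilon$ completes the proof.
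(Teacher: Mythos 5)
Your outline follows the paper's high-level arc (interpolate, 2-slope fill-in with slopes $0$ and a large $s$, symmetrize about $1/2$, invoke Theorem \ref{t:2-slope}), but it is missing the one idea that makes the superadditivity verification possible, and the step where you wave at it is precisely where the argument breaks. You write that ``the strict superadditivity slack of $\phi_1$ dominates the zigzag error.'' The interpolant $\phi_1$ has, in general, \emph{no} strict superadditivity slack to spare: a restricted maximal gDFF satisfies $\phi_1(x)+\phi_1(1-x)=1$, so $\nabla\phi_1(x,y)=0$ along the entire line $x+y=1$; moreover $\nabla\phi_1$ vanishes identically on every two-dimensional additivity region that $\phi$ itself has (and on triples $x,y,x+y$ lying in common affine pieces, e.g.\ when $\phi$ is the identity or $\phi_{BJ,1}$). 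Once you force your zigzag to be symmetric about $1/2$, it must lie strictly \emph{above} $\phi_1$ somewhere (if it lies below on one side of $1/2$, symmetry pushes it above on the other side), and at any additivity relation $\nabla\phi_1(x,y)=0$ where the zigzag raises $\phi_{\mathrm{ext}}(x)+\phi_{\mathrm{ext}}(y)$ or lowers $\phi_{\mathrm{ext}}(x+y)$, superadditivity fails outright. No alignment of teeth can repair this uniformly, because the set of tight pairs $(x,y)$ can be two-dimensional and imposes incompatible alignment constraints.

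This is exactly why the paper inserts a perturbation step between interpolation and fill-in: it constructs an explicit restricted maximal gDFF $\phi_{s,\delta}$ with quantified slack $\nabla\phi_{s,\delta}(x,y)\ge\delta$ off the exceptional set $E_\delta$ (neighborhoods of $x=0$, $y=0$, $x+y=1$, where slack is impossible), and replaces $\phi_{\mathrm{pwl}}$ by the convex combination $\phi_{\mathrm{loose}}=(1-\tfrac{\epsilon}{3(t-1)})\phi_{\mathrm{pwl}}+\tfrac{\epsilon}{3(t-1)}\phi_{t,\delta}$, which is still restricted maximal and has slack $\gamma>0$ off $E_\delta$. Only then does the error budget you describe make sense: the fill-in is taken as $\phi_{\mathrm{fill\text{-}in}}(x)=\max_{u\in U}\{\phi_{\mathrm{loose}}(u)+g(x-u)\}$, which is superadditive \emph{automatically} (a max-convolution argument, not a tooth-alignment argument), the symmetrization is what may destroy superadditivity, and this damage is absorbed by $\gamma$ off $E_\delta$ and handled on $E_\delta$ by separate case analysis using the slope-$0$ plateau at $0^+$, the maximal slope $s^+$ at $0^-$, and the built-in symmetry. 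Your proposal also never addresses superadditivity for arguments outside $[0,1]$ (gDFFs live on all of $\mathbb{R}$), which the paper covers via the grid $U=\frac{1}{q'}\mathbb{Z}$ and its Case 3. Without the perturbation step, your construction fails already for simple inputs with large additivity regions, so the gap is essential rather than technical.
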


\begin{remark}
The result cannot be extended to maximal gDFF. $\phi(x)=ax$ is maximal but not extreme for $0< a<1$.  Any non-trivial extreme gDFF $\phi'$ satisfies $\phi'(1)=1$. $\phi'(1)-\phi(1)=1-a>0$ and $1-a$ is a fixed positive constant. Therefore, $\phi(x)=ax$ cannot be arbitrarily approximated by an extreme gDFF. 
\end{remark}

We briefly explain the structure of the proof. Similar to \cite{basu-hildebrand-koeppe:equivariant,igp_survey,igp_survey_part_2,bhkm}, we
introduce a function 
$\nabla\phi \colon \R \times \R \to \R$, $\nabla\phi(x,y) =
  \phi(x+y) - \phi(x) - \phi(y)$, which measures the
slack in the superadditivity condition. First we approximate a continuous restricted maximal gDFF $\phi$ by a piecewise linear maximal gDFF $\phi_{\mathrm{pwl}}$. Next, we perturb $\phi_{\mathrm{pwl}}$ such that the new maximal gDFF $\phi_{\mathrm{loose}}$ satisfies  $\nabla\phi_{\mathrm{loose}}(x,y)>\gamma>0$ for ``most" $(x,y)\in\mathbb{R}^2$. After applying the 2-slope fill-in procedure to $\phi_{\mathrm{loose}}$, we get a superadditive 2-slope function $\phi_{\mathrm{fill{\text -}in}}$, which is not symmetric anymore. Finally, we symmetrize $\phi_{\mathrm{fill{\text -}in}}$ to get the desired $\phi_{\mathrm{ext}}$.

\begin{lemma}
\label{uni-con}
Any continuous restricted maximal gDFF $\phi$ is uniformly continuous.
\end{lemma}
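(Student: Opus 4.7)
The plan is to bound the one-sided increments of $\phi$ by quantities that depend only on $h$, not on $x$, and then use continuity of $\phi$ at $0$ to close the argument.

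First I would exploit superadditivity twice to sandwich the forward difference. For any $x \in \mathbb{R}$ and any $h > 0$, superadditivity gives
\[
\phi(x+h) \;\ge\; \phi(x) + \phi(h),
\]
and, writing $\phi(x) = \phi\bigl((x+h) + (-h)\bigr)$, superadditivity also gives
\[
\phi(x) \;\ge\; \phi(x+h) + \phi(-h).
\]
Combining these yields the key two-sided estimate
\[
\phi(h) \;\le\; \phi(x+h) - \phi(x) \;\le\; -\phi(-h),
\]
valid for every $x \in \mathbb{R}$ and every $h > 0$. Notice that the bounds on both sides are independent of $x$, so all that remains is to show they tend to $0$ with $h$.

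Next, I would appeal to the hypothesized continuity of $\phi$ at the point $0$ (together with $\phi(0)=0$, which is part of restricted maximality). Given $\epsilon > 0$, choose $\delta > 0$ so that $|\phi(t)| < \epsilon$ whenever $|t| < \delta$. Then for any $x \in \mathbb{R}$ and any $h$ with $0 < h < \delta$, the sandwich above gives $-\epsilon < \phi(h) \le \phi(x+h) - \phi(x) \le -\phi(-h) < \epsilon$. For $h < 0$ one applies the same estimate after swapping $x$ and $x+h$. Hence $|\phi(u) - \phi(v)| < \epsilon$ whenever $|u - v| < \delta$, uniformly in $u,v \in \mathbb{R}$, which is precisely uniform continuity.

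There is not really a hard step here; the only subtle point is that the upper bound $-\phi(-h)$ must shrink as $h \to 0^+$, which would not be automatic from Proposition~\ref{limit-slope} alone (that proposition only shows $-\phi(-h) - th$ is bounded, not that it vanishes). What makes the argument work is that we have assumed $\phi$ is continuous on all of $\mathbb{R}$, so in particular continuous at $0$ from the left as well as from the right, which forces $\phi(-h) \to 0$. If one wanted to weaken the hypothesis to mere right-continuity at $0$, this would become the main obstacle and a separate argument (e.g.\ via the symmetry identity $\phi(x) + \phi(1-x) = 1$ applied near $x=1$) would be required.
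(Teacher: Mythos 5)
Your proof is correct and takes essentially the same route as the paper: superadditivity reduces the increment $\phi(x+h)-\phi(x)$ to values of $\phi$ near $0$, uniformly in $x$, and continuity at $0$ (with $\phi(0)=0$) then supplies the modulus. The only cosmetic difference is your two-sided sandwich $\phi(h) \le \phi(x+h)-\phi(x) \le -\phi(-h)$, where the paper instead uses monotonicity of $\phi$ for the upper bound, writing $0 \ge \phi(x)-\phi(y) \ge \phi(x-y) > -\epsilon$.
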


\begin{proof}
Since $\phi$ is continuous at $0$ and nondecreasing, for any $\epsilon>0$,  there exists $\delta>0$ such that $-\delta<t\le 0$ implies $-\epsilon<\phi(t)\le 0$. For any $x,y$ with $-\delta<x-y<0$, we have $0\ge \phi(x)-\phi(y)\ge\phi(x-y)>-\epsilon$. So $\phi$ is uniformly continuous.

\end{proof}

\begin{lemma}
Let $\phi$ be a continuous restricted maximal gDFF, then for any $\epsilon>0$, there exists a piecewise linear continuous restricted maximal gDFF, such that $\|\phi-\phi_{\mathrm{pwl}}\|_{\infty}<\frac{\epsilon}{3}$.
\end{lemma}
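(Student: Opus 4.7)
The plan is to take $\phi_{\mathrm{pwl}}$ to be the continuous piecewise linear interpolation of $\phi$ at the uniform grid $G_q = \tfrac{1}{q}\Z$ for a positive integer $q$ to be chosen large, and to verify that this interpolation inherits all four defining properties of a restricted maximal gDFF. Since $\phi$ is superadditive with $\phi\ge 0$ on $\R_+$, the inequality $\phi(x+h)\ge \phi(x)+\phi(h)\ge\phi(x)$ for $h\ge 0$ shows that $\phi$ is nondecreasing on $\R$; combined with uniform continuity from Lemma~\ref{uni-con}, I can choose $q$ so large that $\phi(\tfrac{k+1}{q})-\phi(\tfrac{k}{q})<\epsilon/3$ for every $k\in\Z$. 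On each subinterval $[\tfrac{k}{q},\tfrac{k+1}{q}]$, both $\phi$ (by monotonicity) and $\phi_{\mathrm{pwl}}$ (by construction) take values in $[\phi(\tfrac{k}{q}),\phi(\tfrac{k+1}{q})]$, which immediately gives $\|\phi-\phi_{\mathrm{pwl}}\|_{\infty}<\epsilon/3$.

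Three of the four restricted-maximality conditions transfer essentially for free. We have $\phi_{\mathrm{pwl}}(0)=\phi(0)=0$, and $\phi_{\mathrm{pwl}}\ge 0$ on $\R_+$ because the interpolation is a convex combination of the nonnegative values $\phi(k/q)$. For the symmetry $\phi_{\mathrm{pwl}}(x)+\phi_{\mathrm{pwl}}(1-x)=1$, the key point is that $G_q$ is invariant under $x\mapsto 1-x$ because $1\in G_q$; writing $x=\tfrac{k+s}{q}$ with $s\in[0,1]$, the identity reduces to a one-line computation using $\phi(\tfrac{k}{q})+\phi(\tfrac{q-k}{q})=1$ and $\phi(\tfrac{k+1}{q})+\phi(\tfrac{q-k-1}{q})=1$, both of which hold at grid points by restricted maximality of $\phi$.

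The main step, and the only nontrivial one, is showing that the interpolation preserves superadditivity. Writing $x=\tfrac{k+s}{q}$, $y=\tfrac{j+t}{q}$ with $s,t\in[0,1]$, I split into the two cases $s+t\le 1$ and $s+t>1$. In the first, $x+y\in[\tfrac{k+j}{q},\tfrac{k+j+1}{q}]$ and the slack $\phi_{\mathrm{pwl}}(x)+\phi_{\mathrm{pwl}}(y)-\phi_{\mathrm{pwl}}(x+y)$ rewrites algebraically as a nonnegative combination, with coefficients $1-s-t$, $s$, $t$, of the three superadditivity slacks of $\phi$ at the grid-point pairs $(\tfrac{k}{q},\tfrac{j}{q})$, $(\tfrac{k+1}{q},\tfrac{j}{q})$, $(\tfrac{k}{q},\tfrac{j+1}{q})$. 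In the second case, $x+y\in[\tfrac{k+j+1}{q},\tfrac{k+j+2}{q}]$ and an analogous decomposition works with coefficients $s+t-1$, $1-t$, $1-s$ applied to three adjacent slacks. Each slack is nonpositive by superadditivity of $\phi$ at grid points, so $\phi_{\mathrm{pwl}}(x+y)\ge \phi_{\mathrm{pwl}}(x)+\phi_{\mathrm{pwl}}(y)$.

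The main obstacle is verifying these two algebraic identities cleanly; this is the classical observation that uniform-grid piecewise linear interpolation preserves sub/superadditivity, but the case split must be written out carefully because in each case the coefficients in the decomposition are different convex combinations of the values $s,t,1-s,1-t,s+t,2-s-t$, and it is exactly the uniform spacing of $G_q$ that makes the three relevant grid-point slacks of $\phi$ suffice to bound the slack of $\phi_{\mathrm{pwl}}$. Once these identities are in hand, the four restricted maximality conditions are satisfied, and by construction $\phi_{\mathrm{pwl}}$ is piecewise linear with breakpoints only at $G_q$, completing the proof.
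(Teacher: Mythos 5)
Your proposal is correct and follows essentially the same route as the paper: interpolate $\phi$ on the uniform grid $\tfrac1q\Z$ with $q$ chosen via uniform continuity (Lemma~\ref{uni-con}), bound $\|\phi-\phi_{\mathrm{pwl}}\|_\infty$ using monotonicity, and observe that symmetry and superadditivity transfer from the grid values to the interpolant. The only difference is one of completeness: the paper asserts the transfer of superadditivity ``due to piecewise linearity,'' whereas you actually verify it by decomposing the slack $\phi_{\mathrm{pwl}}(x)+\phi_{\mathrm{pwl}}(y)-\phi_{\mathrm{pwl}}(x+y)$ as a nonnegative combination of grid-point slacks in the two cases $s+t\le 1$ and $s+t>1$ --- a correct and worthwhile elaboration of the step the paper leaves implicit.
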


\begin{proof}
By Lemma \ref{uni-con}, $\phi$ is uniformly continuous. For any $\epsilon>0$, there exists $\delta>0$ such that $|x-y|<\delta$ implies $|\phi(x)-\phi(y)|<\frac{\epsilon}{3}$.
Choose $q\in \mathbb{N}$ large enough such that $\frac{1}{q}<\delta$, then $0\le \phi(\frac{n+1}{q})-\phi(\frac{n}{q})<\frac{\epsilon}{3}$ for any integer $n$. We claim that the interpolation of $\phi |_{\frac{1}{q}\mathbb{Z}}$ is the desired $\phi_{\mathrm{pwl}}$. 

We first prove $\|\phi-\phi_{\mathrm{pwl}}\|_{\infty}<\frac{\epsilon}{3}$. For any $x\in\mathbb{R}$, suppose $\frac{n}{q}\le x <\frac{n+1}{q}$ for some integer $n$.  Due to the choice of $q$ and $\delta$, %and the property that both $\phi$ and $\phi_{\mathrm{pwl}}$ are nondecreasing, 
$$\phi(x)-\phi_{\mathrm{pwl}}(x)\le \phi(\frac{n+1}{q})-\phi_{\mathrm{pwl}}(\frac{n}{q})=\phi(\frac{n+1}{q})-\phi(\frac{n}{q})<\frac{\epsilon}{3}$$ 
Similarly we can prove $\phi(x)-\phi_{\mathrm{pwl}}(x)>-\frac{\epsilon}{3}$. So $\|\phi-\phi_{\mathrm{pwl}}\|_{\infty}<\frac{\epsilon}{3}$.

Since $\phi |_{\frac{1}{q}\mathbb{Z}}$ is superadditive and satisfies the symmetry condition, then $\phi_{\mathrm{pwl}}$ is also superadditive and satisfies the symmetry condition due to piecewise linearity of $\phi_{\mathrm{pwl}}$. Therefore, $\phi_{\mathrm{pwl}}$ is the desired function. %a piecewise linear continuous restricted maximal gDFF, and $\|\phi-\phi_{\mathrm{pwl}}\|_{\infty}<\frac{\epsilon}{3}$.
\end{proof}

Next, we introduce a parametric family of restricted maximal gDFFs $\phi_{s,\delta}$ which will be used to perturb $\phi_{\mathrm{pwl}}$.
Define

\noindent
\minipage{0.6\textwidth}
\centering
  \begin{equation*}
            \phi_{s,\delta}(x) =
                \begin{cases}
                    sx-s\delta  & \, \,   \text{if $x<-\delta$ } \\
                    2sx & \, \, \text {if $-\delta\le x<0$}\\
                    0 & \, \, \text {if $0\le x<\delta$}\\
                    \frac{1}{1-2\delta}x-\frac{\delta}{1-2\delta}   & \, \,   \text{if $\delta\le x<1-\delta$}\\
                    1 & \, \,  \text{$1-\delta\le x<1$}\\
                    2sx-2s+1 & \, \,  \text{$1\le x<1+\delta$}\\
                    sx-s+1+s\delta & \, \, \text{$x\ge1+\delta $}\\
                \end{cases}
        \end{equation*}
\endminipage\hfill
\minipage{0.4\textwidth}%
\centering
\includegraphics[width=\linewidth]{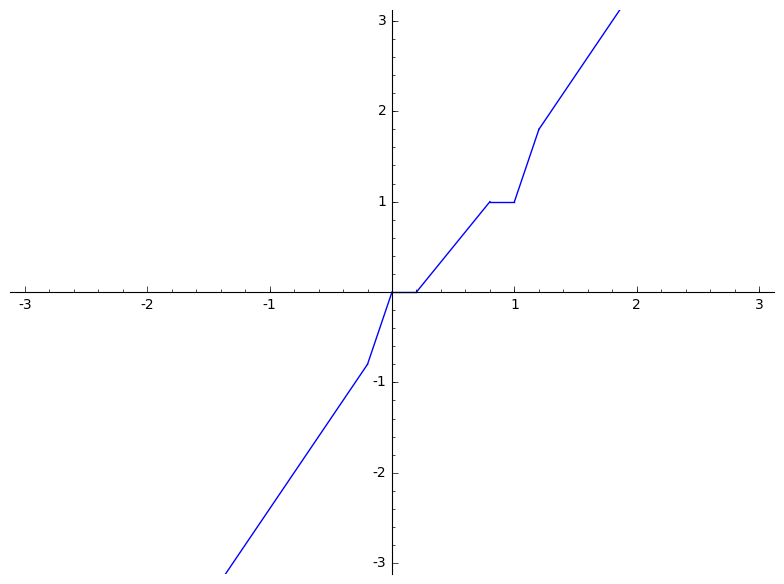}
\captionof{figure}{$\phi_{s,\delta}$ for $s=\frac{1}{5}$ and $\delta=2$ }
\label{fig2}
\endminipage

$\phi_{s,\delta}$ is a continuous piecewise linear function, which has breakpoints: $-\delta, 0, \delta, 1-\delta,1,1+\delta$ and slope values: $s,2s,0,\frac{1}{1-s\delta},0,2s,s$ in each affine piece. \autoref{fig2} is the graph of one $\phi_{s,\delta}$ function constructed by the Python function \sage{phi\textunderscore s\textunderscore delta}.

Let $E_{\delta}=\{(x,y)\in\mathbb{R}^2: -\delta< x< \delta \,\,\,\text{or} \,\,-\delta< y< \delta \,\,\,\text{or} \,\,1-\delta< x+y< 1+\delta\}$. We claim that $\phi_{s,\delta}$ is a continuous restricted maximal gDFF and $\nabla\phi_{s,\delta}(x,y)\ge \delta$ for $(x,y)\notin E_{\delta}$, if $s>1$ and $0<\delta<\min\{\frac{s-1}{2s},\frac{1}{3}\}$. Verifying the above properties of $\phi_{s,\delta}$ is a routine computation by analyzing the superadditivity slack at every vertex in the two-dimensional polyhedral complex of $\phi_{s,\delta}$, which can  also be verified by using metaprogramming\footnote{Interested readers are referred to the function \sage{is\textunderscore superadditive\textunderscore almost\textunderscore strict} in order to check the claimed properties of $\phi_{s,\delta}$.}
 \cite{cutgeneratingfunctionology:online} in SageMath. (More information on $\phi_{s,\delta}$ is provided in Appendix \ref{s:phi-s-delta}.)

\smallbreak

\begin{lemma}

Let $\phi_{\mathrm{pwl}}$ be a piecewise linear continuous restricted maximal gDFF, then for any $\epsilon>0$, there exists a piecewise linear continuous restricted maximal gDFF $\phi_{\mathrm{loose}}$ satisfying: $(i)$ $\|\phi_{\mathrm{loose}}-\phi_{\mathrm{pwl}}\|_{\infty}<\frac{\epsilon}{3}$; $(ii)$ there exists $\gamma>0$ such that $\nabla\phi_{\mathrm{loose}}(x,y)\ge \gamma$ for $(x,y)$ not in $E_{\delta}$.
\end{lemma}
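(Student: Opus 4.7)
The plan is to build $\phi_{\mathrm{loose}}$ as a convex combination of $\phi_{\mathrm{pwl}}$ with a carefully chosen member of the parametric family $\phi_{s,\delta}$; the point is that $\phi_{s,\delta}$ already carries the quantitative strict superadditivity demanded in $(ii)$, and mixing with a small weight transfers that property to $\phi_{\mathrm{loose}}$ while shifting it only slightly in sup-norm. The identity $\phi_{\mathrm{pwl}}(x)=x$ is already extreme, so I may assume $\phi_{\mathrm{pwl}}$ is not the identity; by Proposition~\ref{limit-slope} the asymptotic slope $t := \sup_{x>0}\phi_{\mathrm{pwl}}(x)/x$ satisfies $t>1$, since $t=1$ would collapse the sandwich $tx - \max(0,t-1) \le \phi_{\mathrm{pwl}}(x) \le tx$ to the identity.

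Having $t>1$, set $s := t$ and pick $\delta \in (0,\, \min\{(s-1)/(2s),\, 1/3\})$. By the claimed properties of the family, $\phi_{s,\delta}$ is then a continuous piecewise linear restricted maximal gDFF with $\nabla\phi_{s,\delta}(x,y) \ge \delta$ on $\R^2 \setminus E_\delta$. Each of the four defining conditions of restricted maximality ($\phi(0)=0$, superadditivity, nonnegativity on $\R_+$, and symmetry $\phi(x)+\phi(1-x)=1$) is preserved under convex combinations, so
\[
\phi_{\mathrm{loose}} := (1-\mu)\,\phi_{\mathrm{pwl}} + \mu\,\phi_{s,\delta}
\]
is a piecewise linear continuous restricted maximal gDFF for every $\mu \in (0,1)$.

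To get $(i)$, I would bound $\|\phi_{s,\delta} - \phi_{\mathrm{pwl}}\|_\infty$. Proposition~\ref{limit-slope} applied to $\phi_{\mathrm{pwl}}$ yields $|\phi_{\mathrm{pwl}}(x) - tx| \le t-1$ for all $x$, and the explicit affine tails in the piecewise definition of $\phi_{s,\delta}$ give an analogous bound $|\phi_{s,\delta}(x) - sx| \le C(s,\delta)$ uniform in $x$. Since $s = t$, the linear growth terms cancel and $M := \|\phi_{s,\delta} - \phi_{\mathrm{pwl}}\|_\infty \le (t-1) + C(s,\delta) < \infty$. Choosing $\mu \in (0,1)$ with $\mu M < \epsilon/3$ gives $\|\phi_{\mathrm{loose}} - \phi_{\mathrm{pwl}}\|_\infty = \mu M < \epsilon/3$. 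For $(ii)$, $\nabla$ acts linearly:
\[
\nabla\phi_{\mathrm{loose}}(x,y) = (1-\mu)\,\nabla\phi_{\mathrm{pwl}}(x,y) + \mu\,\nabla\phi_{s,\delta}(x,y).
\]
The first term is nonnegative by superadditivity of $\phi_{\mathrm{pwl}}$, and the second is at least $\mu\delta$ when $(x,y) \notin E_\delta$, so $\gamma := \mu\delta > 0$ is the desired constant.

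The main obstacle is guaranteeing that $\|\phi_{s,\delta} - \phi_{\mathrm{pwl}}\|_\infty$ is finite: an arbitrary choice of $s$ would leave the difference growing linearly at infinity and thus uncontrollable in sup-norm. Forcing $s$ equal to the asymptotic slope $t$ of $\phi_{\mathrm{pwl}}$ resolves this, and is compatible with the admissibility requirement $s>1$ for the family precisely because restricted maximality forces $t>1$ unless $\phi_{\mathrm{pwl}}$ is the already-extreme identity, which was handled at the outset.
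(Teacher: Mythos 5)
Your proposal is correct and takes essentially the same route as the paper's proof: exclude the identity, set $s=t$ (the asymptotic slope of $\phi_{\mathrm{pwl}}$, which is $>1$ in the nontrivial case) so that $\|\phi_{t,\delta}-\phi_{\mathrm{pwl}}\|_{\infty}$ is finite, define $\phi_{\mathrm{loose}}$ as a convex combination putting small weight $\mu$ on $\phi_{t,\delta}$, and obtain $(i)$ from $\mu\|\phi_{t,\delta}-\phi_{\mathrm{pwl}}\|_{\infty}<\epsilon/3$ and $(ii)$ from linearity of $\nabla$ together with $\nabla\phi_{\mathrm{pwl}}\ge 0$ and $\nabla\phi_{t,\delta}\ge\delta$ off $E_{\delta}$, yielding $\gamma=\mu\delta$. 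The only differences are cosmetic: the paper fixes the explicit weight $\mu=\epsilon/(3(t-1))$ and uses the sandwich $tx-t+1\le\phi(x)\le tx$ for both functions to get the sharper bound $t-1$, and it additionally imposes $\delta<1/q$ (with $q$ the breakpoint denominator of $\phi_{\mathrm{pwl}}$), a condition not needed for this lemma but set up for the subsequent fill-in step.
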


\begin{proof}
By Proposition \ref{limit-slope}, %any maximal gDFF is the sum of a linear function and a bounded function. 
let %the limiting slope 
$t=$ %of $\phi_{\mathrm{pwl}}$ when $x\to \infty$ be 
$\lim_{x\to \infty} \frac{\phi_{\mathrm{pwl}}(x)}{x}$, then $tx-t+1\le \phi_{\mathrm{pwl}}(x)\le tx$. We can assume $t>1$, otherwise $\phi_{\mathrm{pwl}}$ is the identity function and the result is trivial. Choose $s=t$ and  $\delta$ small enough such that $0<\delta<\min\{\frac{s-1}{2s},\frac{1}{3}, \frac{1}{q}\}$, where $q $ is the denominator of breakpoints of $\phi_{\mathrm{pwl}}$ in previous lemma. We know that the limiting slope of maximal gDFF $\phi_{t,\delta}$ is also t and $tx-t+1\le \phi_{t,\delta}(x)\le tx$, which implies $\|\phi_{t,\delta}-\phi_{\mathrm{pwl}}\|_{\infty}\le t-1$. 

Define $\phi_{\mathrm{loose}}=(1-\frac{\epsilon}{3(t-1)})\,\phi_{\mathrm{pwl}}+\frac{\epsilon}{3(t-1)}\,\phi_{t,\delta}$. It is immediate to check $\phi_{\mathrm{loose}}$ is restricted maximal since it is a convex combination of two restricted maximal gDFFs.   $\|\phi_{\mathrm{loose}}-\phi_{\mathrm{pwl}}\|_{\infty}<\frac{\epsilon}{3}$ is due to $\|\phi_{t,\delta}-\phi_{\mathrm{pwl}}\|_{\infty}\le t-1$. Based on the property of $\phi_{t,\delta}$, $\nabla\phi_{\mathrm{loose}}(x,y)=(1-\frac{\epsilon}{3(t-1)})\nabla\phi_{\mathrm{pwl}}(x,y)+\frac{\epsilon}{3(t-1)}\nabla\phi_{t,\delta}(x,y)\ge \frac{\epsilon}{3(t-1)}\nabla\phi_{t,\delta}(x,y)\ge \gamma=\frac{\epsilon\delta}{3(t-1)}$ for $(x,y)$ not in $E_{\delta}$.

\end{proof}

\begin{lemma}
Given a piecewise linear continuous restricted maximal gDFF $\phi_{\mathrm{loose}}$ satisfying properties in previous lemma, there exists an extreme gDFF $\phi_{\mathrm{ext}}$ such that $\|\phi_{\mathrm{loose}}-\phi_{\mathrm{ext}}\|_{\infty}<\frac{\epsilon}{3}$.
\end{lemma}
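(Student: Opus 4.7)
The plan is to produce $\phi_{\mathrm{ext}}$ from $\phi_{\mathrm{loose}}$ by a 2-slope fill-in procedure followed by a symmetric gluing, parallel to the approximation construction in the Gomory--Johnson setting but adapted so that one of the two slopes is always $0$, as forced by Theorem~\ref{t:2-slope} and Lemma~\ref{lemma2}. Once we have produced a continuous piecewise linear symmetric function on $\mathbb{R}$ with only slopes $0$ and some $s>0$ that is superadditive and nonnegative on $\mathbb{R}_+$ and flat on a right-neighborhood of the origin, it will be strongly maximal (Theorem~\ref{strongly-maximal}) and hence extreme (Theorem~\ref{t:2-slope}).

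For the construction, I would first refine the breakpoint grid of $\phi_{\mathrm{loose}}$ to a fine mesh $G=\{0=x_0<x_1<\cdots<x_N=1\}$ symmetric under $x\mapsto 1-x$ (so that $1/2\in G$), with mesh size $\eta=\max_i(x_{i+1}-x_i)$ to be fixed later, and choose $s$ to be a common upper bound on the limiting slope $t$ of $\phi_{\mathrm{loose}}$ and on the maximum average slope of $\phi_{\mathrm{loose}}$ across an elementary interval of $G$. On each interval $[x_i,x_{i+1}]$ with $x_{i+1}\le 1/2$, replace the affine piece of $\phi_{\mathrm{loose}}$ by the unique ``flat-then-slope-$s$'' pattern agreeing with $\phi_{\mathrm{loose}}$ at both endpoints; on the reflected interval $[1-x_{i+1},1-x_i]$, use the symmetric ``slope-$s$-then-flat'' pattern. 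By the choice of $s$, the required flat length $(x_{i+1}-x_i)-(\phi_{\mathrm{loose}}(x_{i+1})-\phi_{\mathrm{loose}}(x_i))/s$ is nonnegative, so the pattern exists; and at $x=1/2$ the two halves glue continuously because $\phi_{\mathrm{loose}}(1/2)=1/2$ by Proposition~\ref{given-point}. Outside $[0,1]$ I would extend affinely with slope $s$. The resulting $\phi_{\mathrm{ext}}$ uses only slopes $0$ and $s$, agrees with $\phi_{\mathrm{loose}}$ at every point of $G$, deviates from it by at most $s\eta$ on each interval, and satisfies $\phi_{\mathrm{ext}}(x)+\phi_{\mathrm{ext}}(1-x)=1$ by the symmetric construction, so $\|\phi_{\mathrm{ext}}-\phi_{\mathrm{loose}}\|_\infty\le s\eta<\epsilon/3$ once $\eta$ is sufficiently small.

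The main obstacle is establishing superadditivity of $\phi_{\mathrm{ext}}$, and I would split the verification of $\phi_{\mathrm{ext}}(x)+\phi_{\mathrm{ext}}(y)\le\phi_{\mathrm{ext}}(x+y)$ along the dichotomy provided by the previous lemma. For $(x,y)\notin E_{\delta}$ the slack $\nabla\phi_{\mathrm{loose}}(x,y)\ge\gamma$, and each of $\phi(x),\phi(y),\phi(x+y)$ is perturbed by at most $s\eta$, so choosing $\eta<\gamma/(3s)$ preserves a positive slack. In the critical regime $(x,y)\in E_{\delta}$, one of $x,y,x+y$ lies within $\delta$ of $0$ or $1$; here I would exploit the fact that the fill-in forces $\phi_{\mathrm{ext}}$ to be identically $0$ on an initial flat interval $[0,\alpha_0]$ and, by symmetry, identically $1$ on $[1-\alpha_0,1]$, together with the boundary-layer behavior that $\phi_{\mathrm{loose}}$ inherits from the $\phi_{s,\delta}$-component of the previous lemma. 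When one argument lies in the flat region near $0$, superadditivity reduces to monotonicity of $\phi_{\mathrm{ext}}$; the cases near $1$ are dual via the symmetry condition; and the mixed cases can be checked vertex by vertex on the two-dimensional polyhedral complex induced by $G$. Balancing the slack $\gamma$ against the exact 2-slope shape of the fill-in in this narrow boundary regime is the technical heart of the proof; once superadditivity is established, $\phi_{\mathrm{ext}}$ satisfies all the hypotheses of Theorems~\ref{strongly-maximal} and~\ref{t:2-slope} (note $\lim_{\epsilon\to 0^+}\phi_{\mathrm{ext}}(\epsilon)/\epsilon=0$ because $\phi_{\mathrm{ext}}\equiv 0$ on $[0,\alpha_0]$), hence $\phi_{\mathrm{ext}}$ is extreme, completing the proof.
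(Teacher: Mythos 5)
Your overall strategy is the paper's own: sample $\phi_{\mathrm{loose}}$ on a fine grid, fill in with a two-slope pattern whose lower slope is $0$, symmetrize, and conclude extremality via strong maximality (Theorem~\ref{strongly-maximal}) and the 2-slope theorem (Theorem~\ref{t:2-slope}). But one design choice breaks the statement you are proving. The norm $\|\phi_{\mathrm{loose}}-\phi_{\mathrm{ext}}\|_{\infty}$ is a supremum over all of $\mathbb{R}$, and you extend $\phi_{\mathrm{ext}}$ affinely with slope $s$ outside $[0,1]$. However, $\phi_{\mathrm{loose}}$ is in general not affine outside $[0,1]$: it is a convex combination of $\phi_{t,\delta}$ with $\phi_{\mathrm{pwl}}$, which inherits the shape of the original gDFF on all of $\mathbb{R}$. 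Worse, your $s$ (an upper bound on the limiting slope $t$ and on the chord slopes inside $[0,1]$) can be strictly larger than $t$, because interior slopes of a restricted maximal gDFF can exceed its limiting slope; the paper's own example $\phi_{BJ,1}$ with $C=3/2$ has interior slope $3$ but limiting slope $3/2$. In that case $\phi_{\mathrm{ext}}(x)-\phi_{\mathrm{loose}}(x)\ge (s-t)x+(1-s)\to\infty$ as $x\to\infty$, so $\|\phi_{\mathrm{ext}}-\phi_{\mathrm{loose}}\|_{\infty}=\infty$; and even with $s=t$ the deviation for large $x$ can stay as large as $t-1$, not $<\epsilon/3$. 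The same defect invalidates your Case-1 superadditivity argument, since for $(x,y)\notin E_{\delta}$ with a coordinate outside $[0,1]$ the perturbation of $\phi$ at that point is no longer bounded by $s\eta$. The paper avoids all of this by defining $\phi_{\mathrm{fill{\text -}in}}(x)=\max_{u\in U}\{\phi_{\mathrm{loose}}(u)+g(x-u)\}$ over the \emph{infinite} grid $U=\frac{1}{q'}\mathbb{Z}$, with $g$ built from the \emph{global} maximum slope $s^+$ of $\phi_{\mathrm{loose}}$ (attained on $[-\delta,0]$ by superadditivity); since $s^+$ is then a global Lipschitz constant, the fill-in interpolates $\phi_{\mathrm{loose}}$ at every grid point of $\mathbb{R}$, hence stays within $s^+/q'$ of it everywhere, and the symmetrization preserves this closeness because $\phi_{\mathrm{loose}}$ is itself symmetric.

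The second gap is that the superadditivity check inside $E_{\delta}$ --- which you correctly identify as the technical heart --- is deferred rather than carried out. The flat interval $[0,\alpha_0]$ produced by the fill-in has length roughly $\frac{1}{q'}-\phi_{\mathrm{loose}}(\frac{1}{q'})/s$, which is much smaller than $\delta$ for a fine mesh, so the reduction ``one argument lies in the flat region, hence monotonicity'' does not cover $x\in(\alpha_0,\delta)$; and ``checked vertex by vertex'' is exactly where the paper must argue: its Case 2 splits on the position of $y$ and $x+y$ relative to $\frac{1}{2}$ and uses the sandwich $\phi_{\mathrm{fill{\text -}in}}\le\phi_{\mathrm{loose}}$ on the left half and $\phi_{\mathrm{ext}}\ge\phi_{\mathrm{loose}}$ on the right half together with superadditivity and symmetry of $\phi_{\mathrm{loose}}$, while its Case 3 ($-\delta\le x<0$) needs the fill-in slope to be the global maximum slope so that $\phi_{\mathrm{ext}}(x+y)-\phi_{\mathrm{ext}}(y)\ge s^+x$. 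These arguments can in fact be adapted to a local flat-then-slope construction on $[0,1]$, but as written your proposal leaves the two claims that carry the lemma --- the global $\epsilon/3$ bound and superadditivity near the boundary --- respectively false and unproven.
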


\begin{proof}
Let $s^+\ge 0$ be the largest slope of $\phi_{\mathrm{loose}}$ and $\phi_{\mathrm{loose}}(x)=s^+ x$ for $x\in[-\delta,0]$ where $\delta$ is chosen from previous lemma. Choose $q'\in\mathbb{N}_+$ such that $\frac{1}{q'}s^+<\min\{\frac{\epsilon}{3}, \frac{\gamma}{3}=\frac{\epsilon\delta}{9(t-1)}\}$ and the breakpoints of $\phi_{\mathrm{loose}}$ and $\frac{1}{2}$ are contained in $U=\frac{1}{q'}\mathbb{Z}$. Note that we can always choose a rational $\delta$ to ensure that the last step is feasible. Define a function $g\colon\mathbb{R}\to\mathbb{R}$ and a 2-slope function $\phi_{\mathrm{fill{\text -}in}}\colon[0,1]\to[0,1]$:
\begin{equation*}
            g(x) =
                \begin{cases}
                    0  & \, \,   \text{if $x\ge 0 $} \\
                    s^+x & \, \,   \text{if $x<0$}
                \end{cases}
        \end{equation*}
%and a 2-slope function $\phi_{\mathrm{fill{\text -}in}}\colon[0,1]\to[0,1]$:
\begin{equation*}
            \phi_{\mathrm{fill{\text -}in}}(x) =\max_{u\in U}\{\phi_{\mathrm{loose}}(u)+g(x-u)\}
        \end{equation*}
 We claim that $\phi_{\mathrm{fill{\text -}in}}$ is a continuous 2-slope superadditive function and $\phi_{\mathrm{fill{\text -}in}}\le\phi_{\mathrm{loose}}$, $\phi_{\mathrm{fill{\text -}in}}|_{U}=\phi|_U$. The proof is similar to that of \cite[Theorem 3.3]{infinite}. 
 $|\phi_{\mathrm{fill{\text -}in}}(x)-\phi_{\mathrm{loose}}(x)|\le \frac{1}{q}s^+< \frac{\epsilon}{3}$ implies that $\|\phi_{\mathrm{loose}}-\phi_{\mathrm{fill{\text -}in}}\|<\frac{1}{q'}s^+<\frac{\epsilon}{3}$. However, $\phi_{\mathrm{fill{\text -}in}}$ does not necessarily satisfy the symmetry condition. If we symmetrize it and define the following function:
 
 \begin{equation*}
            \phi_{\mathrm{ext}}(x) =
                \begin{cases}
                    \phi_{\mathrm{fill{\text -}in}}(x)  & \, \,   \text{if $0\le x\le \frac{1}{2} $} \\
                    1-\phi_{\mathrm{fill{\text -}in}}(1-x) & \, \,   \text{if $\frac{1}{2}<x\le 1$}
                \end{cases}
        \end{equation*}
        
We claim that $\phi_{\mathrm{ext}}$ is the desired function. It is immediate to check $\phi_{\mathrm{ext}}(0)=0$, $\phi_{\mathrm{ext}}$ is a 2-slope continuous function and it automatically satisfies the symmetry condition. Since we use slope $0$ and $s^+$ to do the fill-in procedure, the limiting slope of $\phi_{\mathrm{ext}}$ at $0^+$ is $0$. $\|\phi_{\mathrm{loose}}-\phi_{\mathrm{ext}}\|_{\infty}<\frac{1}{q'}s^+<\frac{\epsilon\delta}{9(t-1)}$ because they both satisfy the symmetry condition. So we only need to prove $\phi_{\mathrm{ext}}$ is superadditive.

\emph{Case 1:} if $(x,y)$ is not in $E_{\delta}$, $\nabla\phi_{\mathrm{ext}}(x,y)\ge \nabla\phi_{\mathrm{loose}}(x,y)-\frac{\epsilon\delta}{9(t-1)}-\frac{\epsilon\delta}{9(t-1)}-\frac{\epsilon\delta}{9(t-1)}\ge \frac{\epsilon\delta}{3(t-1)}-\frac{\epsilon\delta}{3(t-1)}=0$.

\emph{Case 2:} if $0\le x\le \delta$, there are also three sub cases: 

$(i)$ if $y,x+y\le \frac{1}{2}$, then $\nabla\phi_{\mathrm{ext}}(x,y)=\nabla\phi_{\mathrm{fill{\text -}in}}(x,y)\ge 0$.

$(ii)$ if $y\le \frac{1}{2}$ and $x+y>\frac{1}{2}$, then $\nabla\phi_{\mathrm{ext}}(x,y)=1-\phi_{\mathrm{fill{\text -}in}}(1-x-y)-\phi_{\mathrm{fill{\text -}in}}(x)-\phi_{\mathrm{fill{\text -}in}}(y)\ge 1-\phi_{\mathrm{loose}}(1-x-y)-\phi_{\mathrm{loose}}(x)-\phi_{\mathrm{loose}}(y)\ge 0$.  Here we use the fact that $\phi_{\mathrm{loose}}\ge\phi_{\mathrm{fill{\text -}in}}$ and $\phi_{\mathrm{loose}}$ is a maximal gDFF.

$(iii)$ if $y,x+y>\frac{1}{2}$, then $\nabla\phi_{\mathrm{ext}}(x,y)=(1-\phi_{\mathrm{fill{\text -}in}}(1-x-y))-\phi_{\mathrm{fill{\text -}in}}(x)-(1-\phi_{\mathrm{fill{\text -}in}}(1-y))=\phi_{\mathrm{fill{\text -}in}}(1-y)-\phi_{\mathrm{fill{\text -}in}}(1-x-y)-\phi_{\mathrm{fill{\text -}in}}(x)\ge 0$ due to superadditivity of $\phi_{\mathrm{fill{\text -}in}}$.

\emph{Case 3:} if $0> x\ge -\delta$, based on the choice of $\delta$ and $s^+$, we know $\phi_{\mathrm{ext}}(x)=s^+ x$ for $0> x\ge -\delta$. For any $y\in \mathbb{R}$, $\phi_{\mathrm{ext}}(x+y)-\phi_{\mathrm{ext}}(y)\ge s^+ x=\phi_{\mathrm{ext}}(x)$ since $\phi_{\mathrm{ext}}$ is a 2-slope function and $s^+$ is the larger slope. 

Similarly we can prove $\nabla\phi_{\mathrm{ext}}(x,y)\ge 0$ if $-\delta\le y\le \delta$.

\emph{Case 4:} if $1-\delta\le x+y\le 1+\delta$, let $\beta=1-x-y$ and $-\delta\le \beta\le \delta$, so by case 2 and 3 $\phi_{\mathrm{ext}}(\beta)+\phi_{\mathrm{ext}}(x)\le \phi_{\mathrm{ext}}(\beta+x)$. Then we have  $\phi_{\mathrm{ext}}(x+y)=\phi_{\mathrm{ext}}(1-\beta)=1-\phi_{\mathrm{ext}}(\beta)=1-\phi_{\mathrm{ext}}(\beta)+\phi_{\mathrm{ext}}(x)-\phi_{\mathrm{ext}}(x)\ge 1-\phi_{\mathrm{ext}}(\beta+x)+\phi_{\mathrm{ext}}(x)=1-\phi_{\mathrm{ext}}(1-y)+\phi_{\mathrm{ext}}(x)=\phi_{\mathrm{ext}}(y)+\phi_{\mathrm{ext}}(x)$.

We have shown that $\phi_{\mathrm{ext}}$ is superadditive, then it is a continuous 2-slope strongly maximal gDFF. By the 2-slope theorem (Theorem \ref{t:2-slope}), $\phi_{\mathrm{ext}}$ is extreme.

\end{proof}

Combine the previous lemmas, then we can conclude the main theorem. %Observe that we always use $0$ as one slope value in the fill-in procedure. It is due to the fact (Lemma \ref{lemma2}) that almost all extreme gDFFs have $0$ as the limiting slope at $0^+$.

\clearpage

\bibliographystyle{splncs03}
\bibliography{../../../bib/MLFCB_bib}

\clearpage

\appendix
\section*{Appendix}

\section{The proof of \autoref{existence}}
\label{s:proof-existence}
\begin{proof}
\noindent\emph{Part (i)}. %(See detailed proof in Appendix.) The proof of (i) is similar to that of Theorem 1 in \cite{yildiz2016cut} which uses the Zorn's Lemma.
If the gDFF $\phi$ is already maximal, then it is dominated by itself. We assume $\phi$ is not maximal. Define a set $A=\{\text{valid gDFF $f$:} \,f\ge \phi \}$, which is a partially ordered set. Consider a chain $(\phi_n)_{n=1}^{\infty}$ with $\phi_n\le \phi_{n+1}$ and a function $\phi'(x)=\lim_{n\to \infty} \phi_n(x)$. We claim $\phi'$ is an upper bound of the chain and it is contained in $A$.

First we prove $\phi'$ is a well-defined function. For any fixed $x_0\in\mathbb{R}$, based on the definition of gDFF, we know that $\phi_n(x_0)+\phi(x_0)\le \phi_n(x_0)+\phi_n(x_0)\le 0$. $\phi(x_0)$ is a fixed constant and it forces $\lim_{n\to \infty} \phi_n(x_0)<\infty$. So we know $\phi'(x)=\lim_{n\to \infty} \phi_n(x)<\infty$ for any $x\in\mathbb{R}$.

Next, we prove $\phi'$ is a valid gDFF and dominates $\phi$. It is clear that $\phi'\ge \phi$, so we only need to show $\phi'$ is a valid gDFF. Suppose on the contrary $\phi'$ is not valid, then there exist $(x_i)_{i=1}^{m}$ such that $\sum_{i=1}^{m}x_i\le 1$ and $\sum_{i=1}^{m}\phi'(x_i)=1+\epsilon$ for some $\epsilon>0$. Since there are only finite number of $x_i$, we can choose $n$ large enough such that $\phi'(x_i)<\phi_n(x_i)+\frac{\epsilon}{m}$. Then $1+\epsilon=\sum_{i=1}^{m}\phi'(x_i)<\sum_{i=1}^{m}(\phi_n(x_i)+\frac{\epsilon}{m})\le 1+\epsilon$. The last step is due to the fact that $\phi_n$ is a valid gDFF.

We have shown that every chain in the set $A$ has a upper bound in $A$. By Zorn's lemma, we know there is a maximal element in the set $A$, which is the desired maximal gDFF.

\smallbreak

\noindent\emph{Part (ii)}.
By $(i)$ we only need to show every maximal gDFF $\phi$ is implied via scaling by a restricted maximal gDFF. Based on Remark \ref{rmk}, if $\phi$ is restricted maximal, then it is implied via scaling by itself. If $\phi$ is linear function, then it is implied via scaling by $\phi'(x)=x$.

\smallbreak

\noindent\emph{Part (iii)}.
If $\phi(x)=ax$ for $0\le a\le1$, then it is implied by $\phi'(x)=x$, which is not strongly maximal by definition. We assume $\phi$ is nonlinear.
By $(ii)$ we only need to show every restricted maximal gDFF $\phi$ is implied by a strongly maximal gDFF. If $\phi$ is already strongly maximal, then it is implied by itself. Suppose $\phi$ is not strongly maximal. From the proof of \autoref{strongly-maximal}, we know $1>\lim_{\epsilon\to0^+}\frac{\phi(\epsilon)}{\epsilon}=s>0$.  If the $\limsup$ is not equal to $\liminf$, then we can derive the same contradiction. If $\lim_{\epsilon\to0^+}\frac{\phi(\epsilon)}{\epsilon}=1$, then $\phi$ is the linear functions $\phi(x)=x$. Define a new function $\phi_1(x)=\frac{\phi-sx}{1-s}$ and we want to show $\phi_1$ is a strongly maximal gDFF. Note that $\phi_1(0)=0$, $\phi_1$ is superadditive, $\phi_1(x)+\phi_1(1-x)=1$ and $\lim_{\epsilon\to0^+}\frac{\phi_1(\epsilon)}{\epsilon}=0$. We only need to prove $\phi_1(x)$ is nonnegative if $x$ is nonnegative and near $0$. Suppose on the contrary there exist $x_0>0$ and $\epsilon>0$ such that $\phi(x_0)=sx_0-\epsilon$. There also exists a positive and decreasing sequence $(x_n)^{\infty}_{n=1}$ approaching $0$ and satisfying $\frac{\phi(x_n)}{x_n}>s-\frac{\epsilon}{2x_0}$. Choose $x_n$ small enough and $k\in\mathbb{Z}_{++}$ such that $x_0\ge kx_n\ge x_0-\frac{\epsilon}{2s}$. Since $\phi$ is superadditive and nondecreasing, we have 
$$sx_0-\epsilon=\phi(x_0)\ge \phi(kx_n)\ge k\phi(x_n)> ksx_n-\frac{k\epsilon x_n}{2x_0}\ge sx_0-\frac{\epsilon}{2}-\frac{\epsilon}{2}=sx_0-\epsilon$$ 
The above contradiction implies that $\phi(x)\ge sx$ for positive $x$ near $0$. Therefore $\phi_1$ is strongly maximal and $\phi$ is implied by $\phi_1$.
\end{proof}
 \clearpage

\section{The proof of \autoref{valid-conversion}}
\label{s:proof-conversion}
\begin{theorem}
Given a valid gDFF $\phi$, then the following function is a valid cut-generating function for $Y_{=1}$:
$$\pi_\lambda(x)=\frac{x-(1-\lambda)\,\phi(x)}{\lambda}, \quad 0< \lambda < 1$$
Given a valid cut-generating function $\pi$ for $Y_{=1}$, which is Lipschitz continuous at $x=0$, then there exists $\delta>0$ such that for all $0<\lambda<\delta$ the following function is a valid gDFF:
$$\phi_\lambda(x)=\frac{x-\lambda\,\pi(x)}{1-\lambda}, \quad 0< \lambda < 1$$
\end{theorem}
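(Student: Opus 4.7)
The proof naturally splits into a forward and a reverse direction, each refined through the three levels of the maximality hierarchy.

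\textbf{Forward direction} (gDFF $\to$ cut-generating function). For any $y \in Y_{=1}$, I use that $\sum_r r\,y(r) = 1 \le 1$, so validity of $\phi$ yields $\sum_r \phi(r) y(r) \le 1$. The identity
\[
\sum_r \pi_\lambda(r) y(r) = \frac{1}{\lambda}\Bigl(\sum_r r\,y(r) - (1-\lambda)\sum_r \phi(r) y(r)\Bigr) \ge \frac{1 - (1-\lambda)}{\lambda} = 1
\]
then gives validity of $\pi_\lambda$. To upgrade to minimality via \autoref{YC:1}, I will verify that (a) $\pi_\lambda(0) = 0$ is immediate; (b) $\pi_\lambda$ subadditive $\iff$ $\phi$ superadditive via the identity $\pi_\lambda(a) + \pi_\lambda(b) - \pi_\lambda(a+b) = \tfrac{1-\lambda}{\lambda}(\phi(a+b) - \phi(a) - \phi(b))$; and (c) the generalized symmetry transforms directly by substituting $\pi_\lambda(1-kr) = (1-kr - (1-\lambda)\phi(1-kr))/\lambda$ and collecting terms, so that $\sup_k$ of the $\pi_\lambda$-expression equals $r/\lambda - \tfrac{1-\lambda}{\lambda}\inf_k (1-\phi(1-kr))/k = \pi_\lambda(r)$. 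For the upgrade to restricted minimality (\autoref{YC:2}), it further suffices to observe $\pi_\lambda(1) = 1 \iff \phi(1) = 1$.

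\textbf{Reverse direction} (cut-generating function $\to$ gDFF). The essential difficulty is that $\pi$ separates only $Y_{=1}$, while validity of $\phi_\lambda$ requires a bound for every $y$ with $s := \sum_r r\,y(r) \le 1$. My plan is a \emph{lifting} construction: for $N \in \Z_+$, augment $y$ by $N$ additional copies of $(1-s)/N$ to obtain $\tilde y_N \in Y_{=1}$, which yields
\[
1 \le \sum_r \pi(r) \tilde y_N(r) = \sum_r \pi(r) y(r) + N\pi\!\bigl(\tfrac{1-s}{N}\bigr).
\]
Lipschitz continuity of $\pi$ at $0$ with constant $L$, combined with $\pi(0) = 0$ (automatic in the minimal and restricted minimal cases), gives $N\pi((1-s)/N) \le L(1-s)$ for $N$ sufficiently large, hence $\sum_r \pi(r) y(r) \ge 1 - L(1-s)$. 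Substituting into $\sum_r \phi_\lambda(r) y(r) = (s - \lambda \sum_r \pi(r) y(r))/(1-\lambda)$ and choosing any $\lambda \le 1/L$ yields $\sum_r \phi_\lambda(r) y(r) \le 1$, so $\phi_\lambda$ is valid. The upgrades to maximality and restricted maximality then reverse the algebraic translations from the forward direction; the additional nonnegativity $\phi_\lambda(x) \ge 0$ required for maximality follows because Lipschitz continuity at $0$ together with subadditivity of $\pi$ yields $\pi(x) \le Lx$ globally (via $\pi(x) \le N\pi(x/N) \le N \cdot L(x/N)$ for $N$ large), forcing $\lambda \pi(x) \le x$ once $\lambda \le 1/L$.

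\textbf{Main obstacle.} The delicate step is the lifting argument in the reverse direction: it is what converts the equality constraint defining $Y_{=1}$ into the inequality bound appropriate for gDFFs, and the Lipschitz hypothesis at $0$ is precisely what prevents the correction term $N\pi((1-s)/N)$ from blowing up as $N \to \infty$. This is also the source of the existence quantifier $\delta > 0$ in the statement, with the concrete threshold $\delta = 1/L$.
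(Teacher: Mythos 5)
Your forward direction is exactly the paper's proof: apply validity of $\phi$ to the multiset encoded by $y\in Y_{=1}$ (noting $\sum_r r\,y(r)=1\le 1$) and rearrange; the upgrades (a)--(c) you sketch reproduce the paper's separate appendix theorems for the minimal and restricted minimal cases, so that part matches. Your reverse direction rests on the same idea the paper uses---lift a $y$ with $s=\sum_r r\,y(r)\le 1$ into $Y_{=1}$ and invoke validity of $\pi$ there---but the mechanism differs: the paper first claims that Lipschitz continuity at $0$ makes $\phi_\lambda(x)\ge 0$ for all $x\ge 0$ when $\lambda$ is small, and then (implicitly) augments $y$ by the \emph{single} item $1-s$, discarding its nonnegative contribution; you instead augment by $N$ copies of $(1-s)/N$ and control the correction term by the Lipschitz bound near $0$. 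Your mechanism is actually the more robust one, since it uses information about $\pi$ only near the origin, whereas the paper's nonnegativity claim amounts to the global bound $\pi(x)\le x/\lambda$ on $\mathbb{R}_+$, which Lipschitz continuity at $0$ cannot supply for a merely valid (non-subadditive) $\pi$.

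There is, however, a genuine gap, and you flagged it yourself: your key estimate $N\pi((1-s)/N)\le L(1-s)$ uses $\pi(0)=0$, which you justify only as ``automatic in the minimal and restricted minimal cases.'' The statement being proved is the \emph{valid} case, and a valid cut-generating function for $Y_{=1}$ need not satisfy $\pi(0)=0$: validity only forces $\pi(0)\ge 0$ (take $y(1)=1$, $y(0)=N$, $N\to\infty$). For instance, $\pi\equiv 1$ is valid and Lipschitz at $0$ with $\pi(0)=1$, and then $N\pi((1-s)/N)=N\to\infty$, so your bound collapses. Worse, without some such extra hypothesis the reverse implication is actually false as stated: define $\pi(x)=0$ for $x\in\{1-1/k : k\ge 3\}$, $\pi(x)=1+x$ for all other $x\ge 0$, and $\pi(x)=1-2x$ for $x<0$; one checks this $\pi$ is valid for $Y_{=1}$ and Lipschitz at $0$, yet $\phi_\lambda(1-1/k)=(1-1/k)/(1-\lambda)>1$ once $k>1/\lambda$, so no $\lambda>0$ yields a valid gDFF. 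Hence no argument can close this gap; the theorem needs the hypothesis $\pi(0)=0$ (equivalently, reading ``Lipschitz continuous at $x=0$'' as $|\pi(x)|\le L|x|$ near $0$). To be fair, the paper's own proof has the same defect---its nonnegativity claim already forces $\pi(0)\le 0$, since $\phi_\lambda(0)=-\lambda\pi(0)/(1-\lambda)$---and under the added hypothesis $\pi(0)=0$ your argument, unlike the paper's, is complete as written, with the explicit threshold $\delta=\min\{1,1/L\}$.
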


\begin{proof}
We want to show that $\pi_\lambda$ is a a valid cut-generating function for $Y_{=1}$. Suppose there is a function $y:\mathbb{R} \to \mathbb{Z}_+, \,\text{$y$ has finite support}$, and $\sum_{r\in\mathbb{R}}r\, y(r)= 1$. We want to show that:
\begin{align*}
 & \, \, \sum_{r\in\mathbb{R}} \pi_\lambda(r)\, y(r)\ge 1\quad \text{holds for $\lambda\in (0,1)$}\\
 \Leftrightarrow & \, \,   \sum_{r\in\mathbb{R}} \frac{r-(1-\lambda)\,\phi(r)}{\lambda}\, y(r)\ge 1 \\
  \Leftrightarrow & \, \,   \sum_{r\in\mathbb{R}} (r-(1-\lambda)\,\phi(r))\, y(r)\ge \lambda \\
   \Leftrightarrow & \, \,   \sum_{r\in\mathbb{R}} r\, y(r) - (1-\lambda) \sum_{r\in\mathbb{R}}\phi(r)\,y(r)\ge \lambda \\
   \Leftrightarrow & \, \, \sum_{r\in\mathbb{R}}\phi(r)\,y(r)\le 1
 \end{align*}
 The last step is derived from $\sum_{r\in\mathbb{R}}r\, y(r)= 1$ and $\phi$ is a gDFF.
 
 On the other hand, the Lipschitz continuity of $\pi$ at $0$ guarantees that $\phi_\lambda(x)\ge 0$ for $x\ge 0$ if $\lambda$ is small enough. Then the proof for validity of $\phi_\lambda$ is analogous to the proof above.
\end{proof}

%In fact, minimal (non-dominating) Y\i{}ld\i{}z-Cornu\'ejols cut-generating functions can be converted to maximal gDFFs and the other way around.

%\begin{theorem}
%Given a maximal gDFF $\phi$, then the following function is a minimal cut-generating function to the model (\ref{e:1}) where $f=-1$, $S=\{0\}$:
%$$\pi_\lambda(x)=\frac{x-(1-\lambda)\,\phi(x)}{\lambda}, \quad 0< \lambda < 1$$
%Given a minimal cut-generating function $\pi$ to the model (\ref{e:1}) where $f=-1$, $S=\{0\}$, then the following function is a maximal DFF:
%%
%\end{theorem}

%\begin{proof}
%We first cite the characterization of minimal Y\i{}ld\i{}z-Cornu\'ejols cut-generating functions \cite{yildiz2016cut}. $f=-1$, $S=\{0\}$, then $\pi$ is minimal if and only if $\pi(0)=0$, $\pi$ is subadditive and $\pi(r)=\sup_{k}\{\frac{1}{k}(1-\pi(1-kr)): k\in \mathbb{Z}_+\}$. If $\pi$ is minimal, then $\pi(1)\ge 1$
%\end{proof}

\begin{theorem}
Given a maximal gDFF $\phi$, then the following function is a minimal cut-generating function for $Y_{=1}$:
$$\pi_\lambda(x)=\frac{x-(1-\lambda)\,\phi(x)}{\lambda}, \quad 0< \lambda < 1$$
Given a minimal cut-generating function $\pi$ for $Y_{=1}$, which is Lipschitz continuous at $x=0$, then there exists $\delta>0$ such that for all $0<\lambda<\delta$ the following function is a maximal gDFF:
$$\phi_\lambda(x)=\frac{x-\lambda\,\pi(x)}{1-\lambda}, \quad 0< \lambda < 1$$
\end{theorem}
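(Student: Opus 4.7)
The proof strategy is to leverage the validity half, already established as \autoref{valid-conversion}, and then lift validity to minimality/maximality by checking the axiomatic characterizations: the characterization of maximal gDFFs from \autoref{s:characterization} (conditions $\phi(0)=0$, superadditivity, nonnegativity on $\R_+$, and the generalized symmetry $\phi(r)=\inf_k\{\tfrac{1}{k}(1-\phi(1-kr))\}$), and the characterization of minimal cut-generating functions for $Y_{=1}$ from \autoref{YC:1} (conditions $\pi(0)=0$, subadditivity, and the sup-version of generalized symmetry).

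For the forward direction, I would start from a maximal gDFF $\phi$ and verify the three conditions of \autoref{YC:1} for $\pi_\lambda$. The equalities $\pi_\lambda(0)=0$ and the subadditivity of $\pi_\lambda$ are immediate from the defining formula $\pi_\lambda(x)=\tfrac{x-(1-\lambda)\phi(x)}{\lambda}$, since the map $x\mapsto x-(1-\lambda)\phi(x)$ is subadditive (linear part is additive, and the superadditive $\phi$ appears with a negative coefficient). The key computation is the generalized symmetry, which is a direct algebraic manipulation:
\begin{align*}
\sup_{k\in\Z_+}\tfrac{1}{k}\bigl(1-\pi_\lambda(1-kr)\bigr)
&=\sup_{k}\tfrac{1}{k}\Bigl(1-\tfrac{(1-kr)-(1-\lambda)\phi(1-kr)}{\lambda}\Bigr)\\
&=\tfrac{r}{\lambda}-\tfrac{1-\lambda}{\lambda}\inf_{k}\tfrac{1}{k}\bigl(1-\phi(1-kr)\bigr)=\pi_\lambda(r),
\end{align*}
where the last step uses the generalized symmetry of $\phi$. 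The sign flip from $\inf$ to $\sup$ is exactly what converts the symmetry of a maximal gDFF into the symmetry of a minimal cut-generating function.

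For the converse direction, I would start from a minimal cut-generating function $\pi$ that is Lipschitz continuous at $0$, say with constant $L$. The verification of $\phi_\lambda(0)=0$, superadditivity, and generalized symmetry is the mirror image of the forward computation and requires no new idea. The genuinely new ingredient is nonnegativity on $\R_+$, where Lipschitz continuity enters: since $\pi(0)=0$, we have $\pi(x)\le Lx$ for $x$ near $0$, so $\phi_\lambda(x)=\tfrac{x-\lambda\pi(x)}{1-\lambda}\ge \tfrac{(1-\lambda L)x}{1-\lambda}$; choosing $\delta<1/L$ ensures $\phi_\lambda(x)\ge 0$ on a neighborhood of $0$, and superadditivity then propagates nonnegativity to all of $\R_+$.

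The only step that is not routine is the symmetry computation above, but it is algebraic rather than subtle; no analytical difficulty beyond the Lipschitz bookkeeping is anticipated. Since the author explicitly calls the proof a ``straightforward computation" and refers to Appendix C, my plan is consistent with what is expected.
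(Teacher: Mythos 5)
Your proposal is correct and takes essentially the same route as the paper's own proof: both directions are handled by verifying the axiomatic characterizations (Theorem~\ref{YC:1} for minimality, the characterization of maximal gDFFs for maximality), with the same algebraic computation turning the $\inf$-form generalized symmetry of $\phi$ into the $\sup$-form symmetry of $\pi_\lambda$ and vice versa, and with Lipschitz continuity at $0$ securing nonnegativity of $\phi_\lambda$ near $0$ for small $\lambda$. Your explicit bound $\pi(x)\le Lx$ with $\delta<1/L$ and the propagation of nonnegativity to all of $\mathbb{R}_+$ via superadditivity merely spell out details the paper leaves implicit.
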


\begin{proof}
As stated in Theorem \ref{YC:1},  $\pi$ is minimal if and only if $\pi(0)=0$, $\pi$ is subadditive and $\pi(r)=\sup_{k}\{\frac{1}{k}(1-\pi(1-kr)): k\in \mathbb{Z}_+\}$, which is called the generalized symmetry condition. If $\pi_\lambda(x)=\frac{x-(1-\lambda)\,\phi(x)}{\lambda}$, then $\pi_\lambda(0)=0$ and $\pi_\lambda$ is subadditive.
\begin{align*}
& \, \, \sup_{k}\{\frac{1}{k}(1-\pi_\lambda(1-kr)): k\in \mathbb{Z}_+\}\\
= & \, \, \sup_{k}\{\frac{1}{k}(1-\frac{1-kr-(1-\lambda)\,\phi(1-kr)}{\lambda}): k\in \mathbb{Z}_+\} \\
= & \, \, \sup_{k}\{\frac{kr-(1-\lambda)(1-\phi(1-kr))}{k\lambda}: k\in \mathbb{Z}_+\}\\
= & \, \, \sup_{k}\{\frac{r}{\lambda}-\frac{1-\lambda}{\lambda}\frac{1}{k}(1-\phi(1-kr)): k\in \mathbb{Z}_+\}\\
= & \, \, \frac{r}{\lambda}-\frac{1-\lambda}{\lambda}\inf_{k}\{\frac{1}{k}(1-\phi(1-kr)): k\in \mathbb{Z}_+\}\\
= & \, \, \frac{r}{\lambda}-\frac{1-\lambda}{\lambda}\phi(r)\\
= & \, \, \pi_\lambda(r).
\end{align*}
Therefore, $\pi_\lambda$ is minimal.

On the other hand, given a minimal cut-generating function $\pi$, let $\phi_\lambda(x)=\frac{x-\lambda\,\phi(x)}{1-\lambda}$, then it is easy to see the superadditivity and $\phi_\lambda(0)=0$. The generalized symmetry can be proven similarly. The Lipschitz continuity of $\pi$ at $0$ implies that  $\phi_\lambda(x)\ge 0$ for any $x\ge 0$ if $\lambda$ is chosen properly.
\end{proof}

\begin{theorem}
Given a restricted maximal gDFF $\phi$, then the following function is a restricted minimal cut-generating function for $Y_{=1}$:
$$\pi_\lambda(x)=\frac{x-(1-\lambda)\,\phi(x)}{\lambda}, \quad 0< \lambda < 1$$
Given a restricted minimal cut-generating function $\pi$ for $Y_{=1}$, which is Lipschitz continuous at $x=0$, then there exists $\delta>0$ such that for all $0<\lambda<\delta$ the following function is a restricted maximal gDFF:
$$\phi_\lambda(x)=\frac{x-\lambda\,\pi(x)}{1-\lambda}, \quad 0< \lambda < 1$$
\end{theorem}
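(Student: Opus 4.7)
The plan is to bootstrap from the previous conversion theorem (for maximal/minimal, proved in the preceding subsections of the appendix) by observing that the extra condition distinguishing the restricted versions from the plain versions, on both sides, is precisely the normalization at $x=1$. By Theorem \ref{YC:2}, a minimal cut-generating function $\pi$ for $Y_{=1}$ is restricted minimal if and only if $\pi(1)=1$. On the gDFF side, the characterization of restricted maximal gDFFs (or Proposition \ref{given-point} combined with Remark \ref{rmk}) says that for a maximal gDFF, being restricted maximal is equivalent to not being of the form $\phi(x)=cx$ with $0\le c<1$, i.e., equivalent to $\phi(1)=1$.

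For the forward direction, suppose $\phi$ is restricted maximal. The previous theorem (maximal $\to$ minimal) yields that $\pi_\lambda$ is a minimal cut-generating function for $Y_{=1}$. Since restricted maximality gives $\phi(1)=1$, we compute
\[
\pi_\lambda(1)=\frac{1-(1-\lambda)\,\phi(1)}{\lambda}=\frac{1-(1-\lambda)}{\lambda}=1,
\]
so by Theorem \ref{YC:2}, $\pi_\lambda$ is restricted minimal.

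For the converse, suppose $\pi$ is restricted minimal and Lipschitz at $0$. Choose $\delta>0$ as in the previous theorem so that for $0<\lambda<\delta$ the function $\phi_\lambda$ is a maximal gDFF. Restricted minimality of $\pi$ gives $\pi(1)=1$, hence
\[
\phi_\lambda(1)=\frac{1-\lambda\,\pi(1)}{1-\lambda}=\frac{1-\lambda}{1-\lambda}=1.
\]
Thus $\phi_\lambda$ is not of the excluded linear form $\phi(x)=cx$ with $0\le c<1$, so by Remark \ref{rmk} the maximality of $\phi_\lambda$ upgrades to restricted maximality.

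Since both directions reduce entirely to the already-established maximal/minimal conversion plus a one-line evaluation at $x=1$, there is no substantive obstacle; the only mild subtlety is confirming, in the reverse direction, that $\phi_\lambda(1)=1$ is enough to rule out the degenerate linear maximal gDFFs, which is exactly the content of Remark \ref{rmk}.
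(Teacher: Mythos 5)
Your proposal is correct and follows essentially the same route as the paper's own proof: reduce to the already-proved maximal/minimal conversion, then use Theorem~\ref{YC:2} (restricted minimal $\Leftrightarrow$ minimal and $\pi(1)=1$) together with $\phi(1)=1$ for restricted maximal gDFFs, checking the normalization at $x=1$ in both directions. The only difference is cosmetic: you spell out the evaluations $\pi_\lambda(1)=1$ and $\phi_\lambda(1)=1$ and the appeal to Remark~\ref{rmk} explicitly, where the paper states them more tersely.
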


\begin{proof}
As stated in Theorem \ref{YC:2}, $\pi$ is restricted minimal if and only if $\pi(0)=0$, $\pi$ is subadditive and $\pi(r)=\sup_{k}\{\frac{1}{k}(1-\pi(1-kr)): k\in \mathbb{Z}_+\}$, and $\pi(1)=1$. Given a restricted maximal gDFF $\phi$, we have $\phi(1)=1$, which implies $\pi_\lambda(1)=1$.

On the other hand, a restricted minimal $\pi$ satisfying $\pi(1)=1$, then $\phi_\lambda(1)=1$. Based on the maximality of $\phi_\lambda$, we know $\phi_\lambda $ is restricted maximal.
\end{proof}

\clearpage

\section{A complete proof of \autoref{lemma2}}
\label{s:proof-lemma2}

\begin{proof}
From Lemma \ref{lemma1} we
know $\phi$ is continuous at $0$ from the right. Suppose 
$\phi(x)=sx$, $x\in[0,x_1)$ and $s>0$. %We claim $0\le s<1$ due to maximality of $\phi$ and $\phi(1)=1$. Define a function: $\phi_1(x)=\frac{\phi(x)-sx}{1-s}$, and it is straightforward to show that $\phi_1$ is maximal, and $\phi(x)=sx+(1-s)\phi_1(x)$. From the extremality of $\phi$, $s=0$ or $\phi(x)=x$.

$\phi$ is not strictly increasing if $s=0$. In order to satisfy the superadditivity, $s$ should be the smallest slope value. $s\le1$ since $\phi(1)\le1$ and $\phi$ is  nondecreasing, which means even if $\phi$ is discontinuous, $\phi$ can only jump up at discontinuities. Similarly if $s=1$, then $\phi(x)=x$.

Next, we can assume $0<s<1$. Define a function: $$\phi_1(x)=\frac{\phi(x)-sx}{1-s}$$
Clearly $\phi_1(x)=0$ for $x\in [0,x_1)$. $\phi_1$ is superadditive because it is obtained by subtracting a linear function from a superadditive function. 

\begin{align*}
  \phi_1(r) & \, \,=\frac{\phi(r)-sr}{1-s}\\
  & \, \, = \frac{1}{1-s}[\inf_{k}\{\frac{1}{k}(1-\phi(1-kr)): k\in \mathbb{Z}_+\} -sr
]\\
 & \, \,=  \frac{1}{1-s}[\inf_{k}\{\frac{1}{k}(1-[(1-s)\phi_1(1-kr)+s(1-kr)]): k\in \mathbb{Z}_+\} -sr]\\
 & \, \,= \frac{1}{1-s}[\inf_{k}\{\frac{1}{k}[(1-s)+skr-(1-s)\phi_1(1-kr)]: k\in \mathbb{Z}_+\} -sr]\\
 & \, \,= \frac{1}{1-s}\inf_{k}\{\frac{1}{k}[(1-s)-(1-s)\phi_1(1-kr)]: k\in \mathbb{Z}_+\} \\
 & \, \, = \inf_{k}\{\frac{1}{k}(1-\phi_1(1-kr)): k\in \mathbb{Z}_+\} 
\end{align*}

The above equation shows that $\phi_1$ satisfies the generalized symmetry condition. Therefore, $\phi_1$ is also a maximal gDFF. $\phi(x)=sx+(1-s)\phi_1(x)$ implies $\phi$ is not extreme, since it can be expressed as a convex combination of two different maximal gDFFs: $x$ and $\phi_1$.
\end{proof}

\clearpage

\section{Two-dimensional polyhedral complex}
\label{s:phi-s-delta}
In this section, we explain the reason why we can only check the superadditive slack at finitely many vertices in the two-dimensional polyhedral complex, in order to prove $\nabla\phi_{s,\delta}(x,y)\ge \delta$ for $(x,y)\notin E_{\delta}$, if $s>1$ and $0<\delta<\min\{\frac{s-1}{2s},\frac{1}{3}\}$.

Let $\phi\colon \mathbb{R}\to\mathbb{R}$ be a piecewise linear function with finitely many pieces with breakpoints $x_1<x_2<\dots<x_n$. 
To express the domains of
linearity of $\nabla\phi(x,y)$, and thus domains of additivity and strict
superadditivity, we introduce the two-dimensional polyhedral complex
$\Delta\P$. 
The faces $F$ of the complex are defined as follows. Let $I, J, K \in
\P$, so each of $I, J, K$ is either a breakpoint of $\phi$ or a closed
interval delimited by two consecutive breakpoints including $\pm\infty$. Then 
$ F = F(I,J,K) = \setcond{\,(x,y) \in \R \times \R}{x \in I,\, y \in J,\, x + y \in
  K\,}$. 
Let $F \in \Delta\P$ and observe that the piecewise linearity of $\phi$ induces piecewise
linearity of $\nabla \phi$.

\begin{lemma}
\label{code}
Let $\phi\colon \mathbb{R}\to \mathbb{R}$ be a continuous piecewise linear function with finitely many pieces with breakpoints $x_1<x_2<\dots<0< \dots<x_n$ and $\phi$ has the same slope $s$ on $(-\infty, x_1]$ and $[x_n,\infty)$. Consider a one-dimensional unbounded face $F$ where one of $I,J,K$ is a finite breakpoint and the other two are unbounded closed intervals, $(-\infty, x_1]$ or $[x_n,\infty)$. Then $\nabla\phi(x,y)$ is a constant along the face $F$.
\end{lemma}

\begin{proof}
We only provide the proofs for one
%two
 case, the proofs for other cases are similar.

%\emph{Case 1:} 
Suppose $I=\{x_i\}$, $J=K=[x_n,\infty)$. The vertex of $F$ is $(x,y)=(x_i,x_n)$ if $x_i\ge 0$ and $(x,y)=(x_i,x_n-x_i)$ if $x_i< 0$. If $x_i\ge 0$, we claim that $\nabla\phi(x,y)=\nabla\phi(x_i,x_n)$ for $(x,y)\in F$.
$$\nabla\phi(x,y)=\phi(x_i+y)-\phi(x_i)-\phi(y)=\phi(x_i+x_n)-\phi(x_i)-\phi(x_n)=\nabla\phi(x_i,x_n)$$
The second step in the above equation is due to $\phi$ is affine on $[x_n,\infty)$ and $x_i+x_n,y\ge x_n$.

 If $x_i< 0$, we claim that $\nabla\phi(x,y)=\nabla\phi(x_i,x_n-x_i)$ for $(x,y)\in F$.
 \begin{align*}
 \nabla\phi(x,y)& \, \,=\phi(x_i+y)-\phi(x_i)-\phi(y)\\
  & \, \, =(\phi(x_n)+s(x_i+y-x_n))-\phi(x_i)-(\phi(x_n-x_i)+s(x_i+y-x_n))\\
  & \, \, = \phi(x_n)-\phi(x_i)-\phi(x_n-x_i)=\nabla\phi(x_i,x_n-x_i)
\end{align*}

The second step in the above equation is due to $\phi$ has slope $s$ on $[x_n,\infty)$ and $x_n-x_i, x_i+y\ge x_n$.

\smallbreak

\emph{Case 2:} Suppose $K=\{x_i\}$, $I=[x_n,\infty)$ and $J=(-\infty,x_1]$. The vertex of $F$ is $(x,y)=(x_n,x_i-x_n)$ if $x_i\le x_1+x_n$ and $(x,y)=(x_i-x_1,x_1)$ if $x_i> x_1+x_n$.

If $x_i\le x_1+x_n$, we claim that $\nabla\phi(x,y)=\nabla\phi(x_n,x_i-x_n)$ for $(x,y)\in F$.
\begin{align*}
 \nabla\phi(x,y)& \, \,=\phi(x_i)-\phi(x)-\phi(x_i-x)\\
  & \, \, =\phi(x_i)-(\phi(x_n)+s(x-x_n))-(\phi(x_i-x_n)-s(x-x_n))\\
  & \, \, = \phi(x_i)-\phi(x_n)-\phi(x_i-x_n)=\nabla\phi(x_n,x_i-x_n)
\end{align*}

The second step in the above equation is due to $\phi$ has the same slope $s$ on $(-\infty, x_1]$ and $[x_n,\infty)$ and $x\ge x_n$, $y=x_i-x\le x_i-x_n\le x_1$.

If $x_i> x_1+x_n$, we claim that $\nabla\phi(x,y)=\nabla\phi(x_i-x_1,x_1)$ for $(x,y)\in F$.
\begin{align*}
 \nabla\phi(x,y)& \, \,=\phi(x_i)-\phi(x_i-y)-\phi(y)\\
  & \, \, =\phi(x_i)-(\phi(x_i-x_1)+s(x_1-y))-(\phi(x_1)-s(x_1-y))\\
  & \, \, = \phi(x_i)-\phi(x_i-x_1)-\phi(x_1)=\nabla\phi(x_i-x_1,x_1)
\end{align*}

The second step in the above equation is due to $\phi$ has the same slope $s$ on $(-\infty, x_1]$ and $[x_n,\infty)$ and $y\ge x_1$, $x=x_i-y\ge x_i-x_1\ge x_n$.

Therefore,  $\nabla\phi(x,y)$ is a constant for $(x,y)$ in any fixed one-dimensional unbounded face.

\end{proof}

By using the piecewise linearity of $\nabla\phi$, we can prove the following lemma.

\begin{lemma}
Define the two-dimensional polyhedral complex
$\Delta\P$ of the function $\phi_{s,\delta}$. 
If 
$\nabla\phi_{s,\delta}(x,y)\ge \delta$ for any zero-dimensional face $(x,y)\notin E_{\delta}$, then $\nabla\phi_{s,\delta}(x,y)\ge \delta$ for $(x,y)\notin E_{\delta}$.
\end{lemma}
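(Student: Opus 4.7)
The plan is to exploit the polyhedral structure of $\Delta\P$ together with the piecewise linearity of $\nabla\phi_{s,\delta}$, reducing the inequality to its values at the vertices (zero-dimensional faces) of the complex. First, note that $E_\delta$ is open and its boundary is contained in the lines $x=\pm\delta$, $y=\pm\delta$, $x+y=1\pm\delta$. Since all of $\pm\delta$ and $1\pm\delta$ are breakpoints of $\phi_{s,\delta}$, these lines are unions of $1$-dimensional faces of $\Delta\P$. Consequently, the closed set $\mathbb{R}^2\setminus E_\delta$ is a union of closed $2$-dimensional cells of $\Delta\P$, and it suffices to prove $\nabla\phi_{s,\delta}(x,y)\ge\delta$ on each such cell $F$.

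Fix a $2$-dimensional cell $F\subseteq\mathbb{R}^2\setminus E_\delta$. By construction of $\Delta\P$, the function $\nabla\phi_{s,\delta}$ is affine on $F$. If $F$ is bounded, then $\nabla\phi_{s,\delta}|_F$ is an affine function on a polytope, hence attains its minimum at a vertex of $F$; every such vertex is a $0$-dimensional face of $\Delta\P$ contained in $F\subseteq\mathbb{R}^2\setminus E_\delta$, so the hypothesis gives $\nabla\phi_{s,\delta}\ge\delta$ at that vertex, and hence on all of $F$.

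If $F$ is unbounded, decompose $F = P + C$, where $P$ is a bounded polytope and $C$ is the recession cone of $F$, spanned by the recession directions of the unbounded $1$-dimensional edges of $F$. Each such unbounded edge is a $1$-dimensional face of $\Delta\P$ of the type considered in Lemma \ref{code} (since $\phi_{s,\delta}$ has matching slope $s$ on $(-\infty,-\delta]$ and $[1+\delta,\infty)$, the hypothesis of that lemma is satisfied). Hence $\nabla\phi_{s,\delta}$ is constant on each unbounded edge of $F$. Because $\nabla\phi_{s,\delta}|_F$ is affine and constant along all generators of $C$, it is constant in every direction of $C$. Therefore its infimum over $F$ equals its minimum over the bounded part $P$, which is attained at a vertex of $F$, i.e., at a $0$-dimensional face of $\Delta\P$ lying in $\mathbb{R}^2\setminus E_\delta$. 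Applying the hypothesis again yields $\nabla\phi_{s,\delta}\ge\delta$ on $F$, completing the proof.

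The main subtlety is the unbounded-cell case: if $\nabla\phi_{s,\delta}$ were not constant along the recession edges of $F$, the minimum could recede to $-\infty$ and could not be read off from a finite vertex set. This is precisely what Lemma \ref{code} rules out, and it is what makes the ``check only at vertices'' principle valid here. Once this is in place, the rest of the argument is just the standard fact that affine functions on polytopes (or on Minkowski sums $P+C$ on which they are constant in every $C$-direction) attain their minima at vertices.
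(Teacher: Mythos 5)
Your proof is correct and follows essentially the same route as the paper's: reduce to the closed two-dimensional faces covering $\mathbb{R}^2\setminus E_{\delta}$, handle bounded faces by checking the vertex values, and invoke Lemma \ref{code} to control unbounded faces. Your recession-cone formulation ($F=P+C$ with $\nabla\phi_{s,\delta}$ constant along $C$) is just a slightly more explicit rendering of the paper's final step, which deduces the bound on an unbounded face from its validity on the enclosing one-dimensional faces.
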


\begin{proof}
Observe that $\mathbb{R}^2-E_{\delta}$ is the union of finite two-dimensional faces. So we only need to show $\nabla\phi_{s,\delta}(x,y)\ge \delta$ for $(x,y)\notin E_{\delta}$ and $(x,y)$ in some two-dimensional face $F$. 

If $F$ is  bounded, then $\nabla\phi_{s,\delta}(x,y)\ge \delta$ since the inequality holds for vertices of $F$ and $\nabla\phi$ is affine over $F$. 

If $F$ is unbounded and suppose it is enclosed by some bounded one-dimensional faces and unbounded one-dimensional faces. For those bounded one-dimensional faces, $\nabla\phi_{s,\delta}(x,y)\ge \delta$ holds since the inequality holds for vertices. For any unbounded one-dimensional face $F'$, by Lemma \ref{code}, the $\nabla\phi$ is constant and equals to the value at the vertex of $F'$. We have showed that $\nabla\phi_{s,\delta}(x,y)\ge \delta$ holds for any $(x,y)$ in the enclosing one-dimensional faces, then the inequality holds for $(x,y)\in F$ due to the piecewise linearity of $\nabla\phi$.
\end{proof}

\begin{remark}
The code is available at \cite{cutgeneratingfunctionology:online}: \\https://github.com/mkoeppe/cutgeneratingfunctionology

In the code, we define a parametric family of functions $\phi_{s,\delta}$ with two variable $s$ and $\delta$. It is clear that $\phi_{s,\delta}$ satisfies the symmetry condition. Although $\phi_{s,\delta}$ is defined in the unbounded domain $\mathbb{R}$, the $\nabla\phi$ only depends on the values at the vertices of $\P$ which is a bounded and finite set. Therefore, it suffices to check  $\nabla\phi_{s,\delta}\ge \delta$ at those finitely many vertices.
\end{remark}

\end{document}